\documentclass[11pt,a4paper]{article}

\newcommand{\graphF}[1]{}
\newcommand{\graphR}[1]{\framebox[5cm][c]{\rule{0pt}{2cm}}}

\usepackage[utf8]{inputenc}
\usepackage[T1]{fontenc}
\usepackage{lmodern}
\usepackage[english]{babel}
\usepackage{amsmath, amssymb, amsthm, mathtools, enumitem, xcolor, tcolorbox, pifont, multirow}
\usepackage{subcaption}
\usepackage{mathrsfs}
\usepackage{geometry}
\usepackage{fontawesome}
\usepackage{microtype}
\usepackage[export]{adjustbox}
\usepackage[normalem]{ulem} 

\usepackage{booktabs}
\usepackage{siunitx}
\newcommand{\ma}{\alpha}

\newcommand{\ms}{\sigma}

\newcommand{\wX}{\widetilde{X}}

\def\argmax{\mathop{\rm argmax}}
\def\argmin{\mathop{\rm argmin}}

\DeclareMathOperator{\diam}{diam}
\newcommand{\be}{\begin{eqnarray}} 
\newcommand{\ee}{\end{eqnarray}}
\newcommand{\beno}{\begin{eqnarray*}}
\newcommand{\eeno}{\end{eqnarray*}}

\usepackage{algorithm2e}
\RestyleAlgo{ruled}
\makeatletter
\renewcommand{\@algocf@capt@plain}{above}
\makeatother

\usepackage[numbers]{natbib}
\usepackage{hyperref}
\usepackage{aliascnt}
\usepackage[capitalise,noabbrev]{cleveref}

\hypersetup{
  colorlinks,
  citecolor=green!80!black,
  linkcolor=blue!90!black,
  urlcolor=blue!90!black
}

\theoremstyle{plain}

\newtheorem{theorem}{Theorem}[section]

\newaliascnt{lemma}{theorem}
\newtheorem{lemma}[lemma]{Lemma}
\aliascntresetthe{lemma}

\newaliascnt{proposition}{theorem}
\newtheorem{proposition}[proposition]{Proposition}
\aliascntresetthe{proposition}

\newaliascnt{corollary}{theorem}

\aliascntresetthe{corollary}

\theoremstyle{definition}

\newaliascnt{definition}{theorem}

\aliascntresetthe{definition}

\newaliascnt{assumption}{theorem}
\newtheorem{assumption}[assumption]{Assumption}
\aliascntresetthe{assumption}

\newaliascnt{example}{theorem}

\aliascntresetthe{example}

\newaliascnt{algo}{theorem}
\newtheorem{algo}[algo]{Algorithm}
\aliascntresetthe{algo}

\theoremstyle{remark}

\newaliascnt{remark}{theorem}
\newtheorem{remark}[remark]{Remark}
\aliascntresetthe{remark}

\crefname{theorem}{theorem}{theorems}
\Crefname{theorem}{Theorem}{Theorems}

\crefname{lemma}{lemma}{lemmas}
\Crefname{lemma}{Lemma}{Lemmas}

\crefname{proposition}{proposition}{propositions}
\Crefname{proposition}{Proposition}{Propositions}

\crefname{corollary}{corollary}{corollaries}
\Crefname{corollary}{Corollary}{Corollaries}

\crefname{assumption}{assumption}{assumptions}
\Crefname{assumption}{Assumption}{Assumptions}

\crefname{definition}{definition}{definitions}
\Crefname{definition}{Definition}{Definitions}

\crefname{remark}{remark}{remarks}
\Crefname{remark}{Remark}{Remarks}

\crefname{example}{example}{examples}
\Crefname{example}{Example}{Examples}

\crefname{algo}{algorithm}{algorithms}
\Crefname{algo}{Algorithm}{Algorithms}

\Crefname{algocfline}{Algorithm}{Algorithms}

\newcommand{\R}{\mathbb{R}}
\newcommand{\N}{\mathbb{N}}

\newcommand{\F}{\mathbb{F}}
\newcommand{\E}{\mathbb{E}}
\newcommand{\PP}{\mathbb{P}}
\newcommand{\Fcal}{\mathcal{F}}
\newcommand{\Acal}{\mathcal{A}}
\newcommand{\Mcal}{\mathcal{M}}
\newcommand{\Lcal}{\mathcal{L}}
\newcommand{\Gcal}{\mathcal{G}}
\newcommand{\Ecal}{\mathcal{E}}
\newcommand{\Zcal}{\mathcal{Z}}

\newcommand{\cG}{{\mathcal G}}
\newcommand{\eps}{\varepsilon}
\newcommand{\pr}[1]{{#1}^{\prime}}
\newcommand{\de}{\mathrm{d}}

\title{Neural feedback approximation for stochastic control with degenerate diffusions: error estimates and numerical analysis}

\author{
Olivier Bokanowski\thanks{Universit{\'e} Paris Cit{\'e}, Laboratoire Jacques-Louis Lions,
\href{mailto:olivier.bokanowski@u-paris.fr}{olivier.bokanowski@u-paris.fr}.
This research benefited from the support of the FMJH Program PGMO and from the support to this program from EDF.}
\and
Jean-Fran{\c c}ois Chassagneux\thanks{ENSAE, CREST and Institut Polytechnique de Paris,
\href{mailto:jean-francois.chassagneux@ensae.fr}{jean-francois.chassagneux@ensae.fr}. This research has benefited from the support of the ANR Project ReLISCoP (ANR-21-CE40-0001).}
\and
Marco Scaratti\thanks{Corresponding author:
\href{mailto:marco.scaratti@univr.it}{marco.scaratti@univr.it}.}\ 
\thanks{University of Verona and Universit{\'e} Paris Cit{\'e}, Laboratoire Jacques-Louis Lions.}
\and
Xavier Warin\thanks{EDF Lab Paris-Saclay and FiME, Laboratoire de Finance des March{\'e}s de l'Energie, 91120 Palaiseau, France,
\href{mailto:xavier.warin@edf.fr}{xavier.warin@edf.fr}.}
}

\date{\today}

\begin{document}

\maketitle

\begin{abstract}
We study finite-horizon stochastic optimal control problems and approximate the resulting time-discrete formulation by a direct policy-learning problem over neural-network feedback maps. 
We prove a quantitative convergence estimate, in an averaged sense, for the error between the time-discrete value and the value induced by an approximately optimized neural policy. The bound separates the approximation of near-optimal feedback policies, the localization of stochastic trajectories on compact sets, and the optimization tolerance in training. The analysis does not require transition-density assumptions and covers possibly degenerate diffusions and deterministic controlled dynamics in a unified framework. Numerical experiments are provided for a degenerate stochastic radial target problem, a Hamilton--Jacobi--Bellman benchmark, and a gas storage problem, illustrating the approach and separating the main error sources: time discretization, restriction to piecewise-constant policies, neural-network approximation, and Monte Carlo evaluation.
\end{abstract}

\bigskip
\noindent\textbf{Keywords.}
Stochastic optimal control; degenerate diffusions; neural networks; feedback control approximation; error estimates; time-discrete approximation.

\medskip
\noindent\textbf{MSC 2020.} Primary: 93E20, 49M25, 65C30. Secondary: 68T07, 60H10.

\section{Introduction}
\label{sec:introduction}

Stochastic optimal control problems arise in many applications in which decisions must be made dynamically under uncertainty, including finance, economics, energy management, operations research, engineering, and models of large interacting populations. In continuous time, these problems are usually formulated through controlled stochastic differential equations and have been studied through several classical approaches: among others, the dynamic programming principle (DPP) and the associated Hamilton--Jacobi--Bellman (HJB) equation \cite{fleming_soner_2006,pham2009continuous}, weak formulations based on martingale methods, changes of measure, backward stochastic differential equations (BSDEs) \cite{elkaroui_peng_quenez_1997}, and stochastic maximum principle methods based on Hamiltonians and adjoint equations \cite{peng1990general,yong1999stochastic}.
Despite this well-established theory, the numerical solution of stochastic control problems remains challenging in moderate and high dimensions. Grid-based approximations of HJB equations suffer from the curse of dimensionality, while computing accurate feedback controls is particularly difficult when the dynamics are degenerate or when the diffusion coefficients are controlled.

\medskip
In recent years, neural-network methods have provided a flexible computational tool for high-dimensional stochastic control and related nonlinear PDEs. Broadly speaking, the existing literature can be divided into two main families. The first one is \emph{value-based}: the neural network is used to approximate the value function, its gradient, a BSDE component, or the residual of a PDE. The second one is \emph{policy-based}: the feedback control itself is parameterized by a neural network and optimized directly through simulated trajectories. These two points of view are related, but they lead to different algorithms and to different theoretical questions.

A first major line of work concerns neural-network solvers for high-dimensional PDEs and BSDEs. The deep BSDE method of \cite{han-jen-e-18} reformulates semilinear parabolic PDEs through BSDEs and approximates the unknown gradient, or equivalently the BSDE control process, by neural networks. Several extensions and variants of this probabilistic PDE viewpoint have since been developed; see, for instance, \cite{han2017deep,hpw2021,beck2019machine,pham2021neural,germain2021neural}. The convergence of the deep BSDE method was analyzed in \cite{han_convergence_2020}, where a posteriori error estimates are proved for coupled FBSDEs under assumptions ensuring stability and neural-network approximation. More recent theoretical analyses have also addressed stochastic-control formulations based on the stochastic maximum principle; see, for instance, \cite{huang_negyesi_oosterlee_2025}. These works are closely related to stochastic control, but their primary object is the PDE/FBSDE representation: neural networks approximate the value function, its gradient, or adjoint variables, rather than directly approximating a Markov feedback policy.
Other equation-based approaches include residual or collocation methods, such as physics-informed neural networks (PINNs) \cite{raissi_physics_informed_2019} and the deep Galerkin method \cite{sirignano_spiliopoulos_dgm_2018}, as well as actor-critic and generalized policy-iteration schemes for HJB equations \cite{zhou_han_lu_2021,cohen_hebner_jiang_sirignano_2025,lefebvre2023differential}. These methods are complementary to the direct feedback-policy approximation studied here.

A second line of work, closer to the present paper, is based on dynamic programming in discrete time. \cite{hure_deep_2021,bachouch_deep_2022,abbas2026stochastic} combine deep neural networks with the dynamic programming principle to solve finite-horizon stochastic control problems.
In particular, in \cite{hure_deep_2021,bachouch_deep_2022}
the proposed algorithms first approximate optimal policies by neural networks and then approximate value functions through Monte Carlo regression, using performance iteration, hybrid iteration, and regress-now methods.
The convergence analysis is expressed in terms of neural-network approximation errors and statistical regression errors, while the optimization error is left aside.
The two works are particularly relevant for the present paper because they also approximate feedback controls by neural networks and provide theoretical guarantees. The main difference is methodological. Their algorithms are backward, DPP-based, and local in time, whereas the method studied here is a global direct-policy method: all feedback policies on the time grid are optimized simultaneously by minimizing the expected cost induced by the simulated controlled dynamics.
A further difference concerns the treatment of degenerate dynamics. 
The error analysis in \cite{hure_deep_2021} relies on a bounded-density assumption for the controlled transition kernels with respect to the training distribution.
This assumption rules out singular transition kernels, and therefore excludes deterministic dynamics and degenerate stochastic schemes whose transition laws are not absolutely continuous with respect to the training distribution.
By contrast, our estimates are based on moment stability estimates for controlled trajectories and localization arguments and do not require any transition-density assumption. This allows us to treat possibly degenerate diffusions as well as deterministic controlled dynamics.

\medskip
The direct policy-learning approach for high-dimensional stochastic control was proposed in \cite{han_deep_2016}. There, the time-dependent controls are represented by feedforward neural networks, stacked through the discretized dynamics, and the objective functional of the stochastic control problem is used directly as the training loss. Note that the underlying idea of optimizing a parametrized policy directly through simulated costs was already used, e.g., in \cite{barrera2006numerical} for gas storage and swing option problems, with low-dimensional parametric families of strategies in place of neural networks. In both cases, this global forward approach is algorithmically close to the method analyzed in
the present paper.

Related policy-based and continuous-time reinforcement-learning approaches have been developed in several directions, including continuous-time portfolio selection \cite{wang_zhou_2020}, actor-critic methods for mean-field control \cite{frikha2023actor,pham2023actor}, and simulation-free methods for stochastic control \cite{hua_lauriere_vandeneijnden_2024}.
These works demonstrate the relevance of policy parameterization and trajectory-based optimization. However, many of them focus primarily on algorithm design, numerical performance, or the training procedure itself. They do not provide the type of quantitative feedback-policy approximation estimate developed here, in which the value error is decomposed into explicit policy approximation, localization, and optimization terms.

Our error analysis is closely related to recent weak error estimates for neural-network approximations of feedback strategies in deterministic control problems. In \cite{bok-pro-war-23}, neural networks are used to approximate feedback controls for first-order HJB equations, with error estimates in an averaged norm. The subsequent work \cite{bok-war-25} develops representation results and weak error estimates for differential games. In the spirit of these two works, our error estimate is measured in a weak or averaged sense. However, the stochastic setting requires new stability estimates for controlled stochastic numerical schemes and a localization argument to handle trajectories that may leave every compact subset of the state space with positive probability.
Further related literature concerns neural approximation of feedback controls for stochastic partial differential equations (SPDEs); for instance, the works \cite{stannat_vogler_wessels_2023,stannat_vogler_2025} develop approximation results for optimal feedback controls in stochastic reaction-diffusion settings, including neural-network approximations.

Although the present paper is written in a classical finite-dimensional setting, the analysis is also relevant for high-dimensional control problems obtained from particle or finite-dimensional approximations of infinite-dimensional models, such as mean field games and mean field control; see, e.g., \cite{hu_lauriere_2024}. Finite-particle and symmetric finite-dimensional approximations provide one way to reduce such problems to high-dimensional stochastic control problems, where neural-network methods can then be applied; see, for example, \cite{picarelli_extended_2025,soner2025learning, bokanowski2026numerical}.

\medskip
This paper develops a quantitative convergence analysis for direct neural-network approximations of feedback policies in finite-horizon stochastic control, allowing for degenerate diffusions and deterministic controlled dynamics. The analysis is carried out for the value function of a fixed time-discretized control problem, obtained by restricting controls to the time grid and discretizing the controlled dynamics.
The main result, \Cref{thm:error_estimate}, provides an explicit averaged error estimate for the value induced by an approximate neural feedback policy. The bound separates the approximation error of a near-optimal feedback policy by the neural-network class on compact sets, the localization error due to stochastic trajectories leaving these compact sets, and the optimization tolerance of the training problem. This decomposition is the central theoretical contribution of the paper: it identifies separately the roles of feedback approximation, stochastic propagation, and numerical optimization in the error of direct policy learning.
A distinctive feature of the analysis is that it does not require transition-density assumptions or non-degeneracy of the diffusion coefficient. It is instead based on stability estimates for controlled stochastic numerical schemes, approximation of near-optimal measurable feedback policies by Lipschitz feedbacks in value, and probabilistic localization via moment bounds. The resulting framework therefore covers non-degenerate diffusions, degenerate diffusions, and deterministic controlled dynamics in a unified way.
We stress that, while the error estimate of \Cref{thm:error_estimate} is
fully quantitative, the Lipschitz approximation of near-optimal measurable feedbacks
is not: the Lipschitz constant of the approximating policy is not quantified, so
that the resulting convergence result (\Cref{thm:convergence}) does not provide a
rate in terms of the size of the approximation class; see
\Cref{rem:lip_const_dependence,rem:non-quantitative-L} for a discussion.

The convergence theorem is deliberately formulated at the level of a fixed time grid. The discretization error between the continuous-time problem and the time-discrete problem is treated separately, using standard estimates for piecewise-constant controls and numerical schemes \cite{picarelli_probabilistic_2020,ass-bok-zid-2015,jakobsen_improved_2019,kloeden_numerical_1992}. This separation makes explicit the two layers of approximation: first, the discretization of the original stochastic control problem; second, the neural-network approximation of the resulting discrete feedback problem.

\medskip
The numerical experiments are designed to complement the theoretical analysis by isolating the main error mechanisms that arise in practice. We consider three examples. The first is a two-dimensional stochastic radial target problem with degenerate diffusion. The results show that the expected time-discretization orders are recovered when the neural-network and Monte Carlo errors remain below the discretization error.
The second example is an HJB benchmark for which semi-analytic formulas for both the value and the optimal feedback are available, allowing for comparison of both the learned values and controls. The experiment identifies the regime in which neural-network approximation and optimization errors become dominant after the time-discretization error has been sufficiently reduced.
In both examples, we additionally compare Euler--Maruyama with a weak second-order scheme.
The third example is a constrained gas storage problem motivated by energy applications, providing a realistic test case for which no closed-form benchmark is available. In this case, the exogenous price factors are simulated exactly on the grid, so the observed convergence mainly reflects the refinement of the decision grid, together with residual neural-network and training errors.

Taken together, the numerical results support the theoretical error decomposition: the experiments isolate the effects of time discretization, restriction to piecewise-constant policies, neural-network approximation and optimization, and Monte Carlo evaluation. To the best of our knowledge, a comparable combination of quantitative error estimates for global direct neural feedback learning and numerical separation of the main error sources within a framework that covers degenerate diffusions and deterministic controlled dynamics is not available in the existing literature.

\bigskip
The paper is organized as follows. \Cref{sec:continuous_time_setting,sec:discrete_time_setting} introduce the continuous-time control problem, the time-discrete approximation, and the feedback formulation. \Cref{sec:convergence_analysis} proves the stability, localization, and approximation estimates leading to the main convergence theorem. \Cref{sec:numerical_algorithm} describes the direct policy-learning algorithm used in the experiments, and \Cref{sec:numerical_examples} presents the numerical examples.

\medskip \noindent\textbf{Notation.}
Given two measurable spaces $(X,\mathcal{X})$ and $(Y,\mathcal{Y})$, we denote by $\Mcal(X;Y)$ the set of measurable functions from $X$ to $Y$. We adopt $\lvert\cdot\rvert$ for both the Euclidean norm and the Frobenius norm. Finally, for any $R\geq0$, $B_R:=B(0,R)$ denotes the (Euclidean) closed ball of radius $R$.
If, in addition, $(X,|\cdot|_X), (Y,|\cdot|_Y)$ are normed spaces and $\varphi: X \to Y$, we denote by
\begin{align*}
    [\varphi] := \sup_{x\neq\pr{x}}\frac{|\varphi(x)-\varphi(\pr{x})|_Y}{|x-\pr{x}|_X}    
\end{align*}
its Lipschitz constant, whenever finite.

\section{Continuous-time setting}
\label{sec:continuous_time_setting}

Let $(\Omega, \Fcal, \PP)$ be a complete probability space endowed with a filtration $\F=(\Fcal_t)_{t\geq0}$ satisfying the usual conditions ($\PP$-completeness and right-continuity) and let $w=(w_t)_{t\geq0}$ be an $m$-dimensional standard Brownian motion, adapted to $\F$. Finally, denote by $\E$ the expectation under $(\Omega, \Fcal, \PP)$.

Fix $T>0$. We consider a finite-horizon stochastic control problem. Let $A \subseteq \R^{k}$ and denote by $\Acal$ the set of admissible controls that are assumed to be $A$-valued $\F$-progressively measurable processes $\alpha: [0,T]\times \Omega \to A$. The evolution of a controlled $d$-dimensional state is governed by a stochastic differential equation, with measurable coefficients
$b : [0,T] \times \R^{d} \times A \to \R^{d}$
and
$\sigma : [0,T] \times \R^{d} \times A \to\R^{d \times m}$. 
For an initial condition $(t,x)\in[0,T]\times\R^d$, we consider the state equation given by:
\begin{align}\label{eq:sde} 
\begin{cases}
    \de y_s = b(s, y_s, \alpha_s)\, \de s + \sigma(s, y_s, \alpha_s)\, \de w_s,\quad s\in(t,T]\\
    y_t = x.
\end{cases}
\end{align}
We denote by $y^{t,x,\alpha}$ the corresponding state process starting from $(t,x)$ and driven by the control $\alpha\in\Acal$. Note that when the initial $t$ time is strictly positive, only the restriction of $\alpha$ to $[t,T]$ is relevant.

\begin{assumption}\label{ass:ex_uniq_SDE}
The following conditions hold.
    \begin{enumerate}[label=(\roman*)]
        \item $A$ is a non-empty compact and convex subset of $\R^k$.
        \item For any $t,s \in [0,T]$, $x,y\in\R^d$, and $a,\pr a \in A$,
        \begin{align*}
            \lvert b(t,x,a) - b(s,y,\pr a) \rvert
            \leq [b]_0 \lvert t-s \rvert^{1/2} + [b]_1 \lvert x-y \rvert + [b]_2 \lvert a-\pr a \rvert,
        \end{align*}
        for some non-negative constants $[b]_0, [b]_1, [b]_2$;
        
        \item For any $t,s \in [0,T]$, $x,y\in\R^d$, and $a,\pr a \in A$,
        \begin{align*}
            \lvert \sigma(t,x,a) - \sigma(s,y,\pr a) \rvert
            \leq [\sigma]_0 \lvert t-s \rvert^{1/2} + [\sigma]_1 \lvert x-y \rvert + [\sigma]_2 \lvert a-\pr a \rvert,
        \end{align*}
        for some non-negative constants
        $[\sigma]_0, [\sigma]_1, [\sigma]_2$.
    \end{enumerate}
\end{assumption}

Notice that, since the horizon is finite and $A$ is compact, the coefficients have linear growth uniformly in $(s,a)$. More precisely, there exists a positive constant $C$ such that
\begin{align*}
    |b(s,x,a)| + |\sigma(s,x,a)| \leq C(1+|x|), \qquad (s,x,a) \in [0,T] \times \R^d \times A.
\end{align*}
Consequently, for every $(t,x)\in[0,T]\times\mathbb R^d$ and every $\alpha \in \Acal$, equation~\eqref{eq:sde} admits a unique strong solution, as a standard consequence of the classical well-posedness theory for SDEs; see, for instance, \cite[Ch.s~1-2]{yong1999stochastic}.

\begin{remark}
    The Lipschitz continuity of $b$ and $\sigma$ with respect to the control variable is not needed for the existence and uniqueness of the solution to \eqref{eq:sde}. For that purpose, it is enough to assume uniform Lipschitz continuity in the state variable and a uniform linear growth condition. We impose Lipschitz continuity with respect to the control variable because it will later be used to derive quantitative error estimates.
\end{remark}

To introduce the control problem, let $f : [0,T] \times \R^{d} \times A \to \R$ and $g :\R^{d} \to \R$ be measurable functions satisfying the following assumptions.

\begin{assumption}\label{ass:cost}~\\ \vspace{-1em}
    \begin{enumerate}[label=(\roman*)]
        \item 
        There exist non-negative constants $[f]_0, [f]_1, [f]_2$ such that, for all $t,s\in[0,T]$, $x,y\in\mathbb R^d$, and $a,\pr a\in A$, 
        \begin{align*}
            \lvert f(t,x,a) - f(s,y,\pr a) \rvert &\leq [f]_0 \lvert t-s \rvert^{1/2} + [f]_1 \lvert x-y \rvert + [f]_2 \lvert a-\pr a \rvert.
        \end{align*}
        \item
        There exists a non-negative constant $[g]$ such that, for all $x,y\in \R^d$,
        \begin{align*}
            \lvert g(x) - g(y) \rvert &\leq [g] \lvert x-y \rvert.
        \end{align*}
    \end{enumerate}
\end{assumption}

For any $(t,x)\in[0,T]\times\R^d$ and any admissible control $\alpha\in\mathcal{A}$ we define the cost functional
\begin{align*}
    J(t,x,\alpha) := \E \bigg[ \int^T_t f(s, y^{t,x,\alpha}_s, \alpha_s)\, \de s + g(y^{t,x,\alpha}_T) \bigg],
\end{align*}
and the corresponding value function by
\begin{align*}
    v(t,x) := \inf_{\alpha\in\Acal}J(t,x,\alpha).
\end{align*}

We recall the following regularity estimate for the value function. It follows from the continuity property of the coefficients and standard estimates for stochastic differential equations (cf. \cite[Ch.~1, Thm. 6.16 and Ch.~4, Prop. 3.1]{yong1999stochastic}).

\begin{proposition}\label{prop:cont_value_function}
    Under \Cref{ass:ex_uniq_SDE,ass:cost}, the value function $v$ is well-defined on $[0,T]\times \R^d$ and satisfies the bound
    \begin{align*}
        |v(t,x)-v(s,y)|\leq C \big( |x-y| + (1+|x|+|y|)|t-s|^{1/2} \big), \hspace{10pt}\forall\ t,s\in[0,T],\ x,y \in \R^d,
    \end{align*}
    for some constant $C>0$, depending only on $T$, and the constants appearing in the above assumptions.
\end{proposition}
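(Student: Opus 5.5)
The argument splits into a finiteness step, a Lipschitz estimate in space at fixed time, and a Hölder estimate in time at fixed state; the triangle inequality then combines the two. All three rest on standard moment and stability estimates for \eqref{eq:sde}, which are uniform in $\alpha\in\Acal$ because the constants in \Cref{ass:ex_uniq_SDE} do not depend on the control.

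\emph{Well-posedness.} By the linear growth of $b,\sigma$, Itô's formula, BDG and Gronwall, we have
\[
\E\big[\sup_{s\in[t,T]}|y^{t,x,\alpha}_s|^2\big]\le C(1+|x|^2)
\]
uniformly in $\alpha$. Since $f$ and $g$ have linear growth (they are Lipschitz, and $A$ is compact), $|J(t,x,\alpha)|\le C(1+|x|)$. Hence $v$ is finite.

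\emph{Space regularity.} Fix $t$ and $x,y$, and use the same $\alpha$ for both initial conditions. Apply Itô's formula to $|y^{t,x,\alpha}_s-y^{t,y,\alpha}_s|^2$ and use the Lipschitz bounds in the state variable, with the control terms cancelling. Gronwall then gives $\E\sup_s|y^{t,x,\alpha}_s-y^{t,y,\alpha}_s|^2\le C|x-y|^2$. The Lipschitz property of $f$ and $g$ yields $|J(t,x,\alpha)-J(t,y,\alpha)|\le C|x-y|$. Taking the infimum over $\alpha$ on both sides gives $|v(t,x)-v(t,y)|\le C|x-y|$.

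\emph{Time regularity.} This is the main step. Take $t<s$ and fix $x$. The difficulty is that a control admissible from time $t$ and one admissible from time $s$ live on different intervals. I plan to compare them directly through the cost functional, without a dynamic programming principle.

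\begin{enumerate}[label=(\alph*)]
\item \emph{Upper bound for $v(t,x)-v(s,x)$.} Given $\alpha\in\Acal$, I will define $\tilde\alpha$ on $[t,T]$ as $\tilde\alpha_r=a_0$ (a fixed point of $A$) for $r\in[t,s)$, and $\tilde\alpha_r=\alpha_{r-(s-t)}$ for $r\ge s$. I would rather avoid this time shift, since it complicates adaptedness. The simpler choice is $\tilde\alpha_r=\alpha_r$ for $r\ge s$. Then
\[
J(t,x,\tilde\alpha)-J(s,x,\alpha)=\E\Big[\int_t^s f\,\de r\Big]+\big(\text{terms from }y^{t,x,\tilde\alpha}_s\ne x\big).
\]
By the flow property, $y^{t,x,\tilde\alpha}$ on $[s,T]$ solves \eqref{eq:sde} started from the random point $y^{t,x,\tilde\alpha}_s$ at time $s$. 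The $L^2$ stability estimate from the previous step, conditioned on $\Fcal_s$ (the random initial point is $\Fcal_s$-measurable and $\alpha$ is progressive), bounds the difference by $C\,\E|y^{t,x,\tilde\alpha}_s-x|$. The moment estimate gives $\E|y_s-x|\le C(1+|x|)|s-t|^{1/2}$. The running cost over $[t,s]$ is bounded by $C(1+|x|)|s-t|$. Hence $v(t,x)\le J(s,x,\alpha)+C(1+|x|)|t-s|^{1/2}$, and taking the infimum over $\alpha$ gives the bound.
\item \emph{Reverse inequality.} Given $\alpha$ admissible from $t$, I will use its restriction to $[s,T]$ as a control from $s$. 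The same decomposition works: the costs agree on $[s,T]$ up to the stability error from replacing $y^{t,x,\alpha}_s$ by $x$, plus the integral over $[t,s]$. This yields $v(s,x)\le J(t,x,\alpha)+C(1+|x|)|t-s|^{1/2}$.
\end{enumerate}

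\emph{Combination.} The triangle inequality gives
\[
|v(t,x)-v(s,y)|\le|v(t,x)-v(t,y)|+|v(t,y)-v(s,y)|,
\]
which is bounded by $C|x-y|+C(1+|y|)|t-s|^{1/2}$. This is within the stated bound. The constants depend only on $T$ and on the constants in \Cref{ass:ex_uniq_SDE,ass:cost}.

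The subtle point is the conditional stability estimate with an $\Fcal_s$-measurable random initial condition and an arbitrary progressive control. I would handle it by carrying out the Itô–Gronwall computation directly on the difference $y^{t,x,\alpha}_r-y^{s,x,\alpha}_r$ for $r\in[s,T]$, which avoids conditioning altogether. This gives $\E|y^{t,x,\alpha}_r-y^{s,x,\alpha}_r|^2\le C\,\E|y^{t,x,\alpha}_s-x|^2$.
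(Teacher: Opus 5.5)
Your proof is correct and is essentially the standard argument the paper defers to in \cite[Ch.~1, Thm.~6.16 and Ch.~4, Prop.~3.1]{yong1999stochastic}. It uses uniform moment and $L^2$-stability estimates for the controlled SDE, the same control for two initial states in space, and, in time, restriction of a control to $[s,T]$ (resp.\ extension by a constant on $[t,s)$); this is legitimate because every initial time uses the same class $\Acal$ on the fixed filtered space, so no dynamic programming principle is needed. Your closing remark is the right way to make step (a) rigorous: combining the direct It\^o--Gronwall bound $\E|y^{t,x,\tilde\alpha}_r-y^{s,x,\alpha}_r|^2\le C\,\E|y^{t,x,\tilde\alpha}_s-x|^2$ with $\E|y^{t,x,\tilde\alpha}_s-x|^2\le C(1+|x|^2)|s-t|$ gives the $C(1+|x|)|t-s|^{1/2}$ term without any conditioning.
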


For simplicity of exposition, we assume $t=0$ from now on. Thus, the control problem is posed on the whole interval $[0,T]$; in this case, the process $(y_s)_{s\in[0,T]}$ is denoted by $y_s^{x,\alpha}:=y_s^{0,x,\alpha}$.

\section{Discrete-time setting and feedback control formulation}
\label{sec:discrete_time_setting}

\subsection{Discrete-time setting}
\label{sub:discrete_time_setting}

We approach the discretization of the control problem in two steps: given a time grid, we first restrict the admissible controls to piecewise-constant policies, and then discretize the dynamics.
Let $N \in \N \backslash \{0\}$ and define the time-step $\tau := T/N >0$, so that the resulting time mesh is given by $t_k := k\tau$ for $k=0,\dots,N$. We define
\begin{align*}
    \Acal^\tau := \bigg\{ \alpha\in\Acal \ &\bigg| \ \exists \, (a_0,\dots,a_{N-1})\in\Mcal(\Omega;A)^N, \ a_k \text{ are } \Fcal_{t_k}\text{-measurable},\\
    &\qquad \text{such that} \ \alpha_s(\omega) = \sum_{k=0}^{N-1} a_k(\omega) \mathbf{1}_{s\in[t_k, t_{k+1})} \bigg\}
\end{align*}
(with the convention that $\alpha_T(\omega) = a_{N-1}(\omega)$).
Thus, any control $\alpha\in\Acal^\tau$ can be written as
\begin{align*}
    \alpha(\omega) = (a_0(\omega),\dots,a_{N-1}(\omega)), \quad \text{where each $a_k$ 
    is $A$-valued, $\Fcal_{t_k}$-measurable.}
\end{align*}

We first define the value $v^\tau(t,x)$ by considering piecewise-constant policies, as follows:
\begin{align*}
    v^\tau(0,x) := \inf_{\alpha\in\Acal^\tau}J(0,x,\alpha) = \inf_{\alpha\in\Acal^\tau} \E \bigg[ \int^T_0 f(s, y^{x,\alpha}_s, \alpha_s)\, \de s + g(y^{x,\alpha}_T) \bigg].
\end{align*}
It is clear that $v(0,x) \leq v^\tau(0,x)$. The reverse bound quantifies the
approximation of general controls by piecewise-constant policies: this question
was first addressed by Krylov \mbox{\cite{krylov_piecewise_1999}}, who obtained, for
bounded coefficients, an error of order $\tau^{1/6}$, subsequently improved to
$C\tau^{1/4}$ in \cite{jakobsen_improved_2019}. In our setting, with possibly
unbounded (Lipschitz) coefficients, the arguments in \mbox{\cite{ass-bok-zid-2015}}
yield $v^\tau(0,x) \leq v(0,x) + C(1+|x|)^{q}\,\tau^{1/4}$ for a positive
constant $C$ and some $q>0$.

We then consider the discretization of the dynamics.
For any given $\alpha\in\Acal^\tau$, we discretize the dynamics \eqref{eq:sde} of $y^{x,\alpha}$ by means of the Euler--Maruyama recursive relation
\begin{align*}
\begin{cases}
    X^{x,\alpha}_{t_{k+1}} &= X^{x,\alpha}_{t_k} + b(t_k,X^{x,\alpha}_{t_k},a_k)\tau
      + \sigma(t_k,X^{x,\alpha}_{t_k},a_k)\Delta W_{k+1},\quad k=0,\dots,N-1,\\
    X^{x,\alpha}_{t_0} &= x,
\end{cases}
\end{align*}
where $\Delta W_{k+1}:= w_{t_{k+1}}-w_{t_k}$ are i.i.d. random variables with distribution $\mathcal{N}(0,\tau\,\mathrm{I}_m)$. We then obtain a second approximation of the value function, hereafter denoted by $V_0$:
\begin{align}\label{eq:def_V0}
    V_0(x) := \inf_{\alpha\in\Acal^\tau}\tilde{J}(0,x,\alpha)
\end{align}
where
\begin{align}\label{eq:def_J_tilde}
    \tilde{J}(0,x,\alpha) := \E \bigg[ \sum_{k=0}^{N-1}f(t_k, X^{x,\alpha}_{t_k}, a_k)\tau + g(X^{x,\alpha}_{t_N}) \bigg].
\end{align}
We also have that there exists a positive constant $C$ such that $\lvert v^\tau(0,x) - V_0(x) \rvert \leq C(1 + \lvert x \rvert)\tau^{1/2}$ 
(cf. \cite[Prop.~3.1]{picarelli_probabilistic_2020}; see also \cite[Thm.~10.2.2]{kloeden_numerical_1992} for the classical strong order $1/2$ convergence of the Euler--Maruyama scheme).
Therefore, from the triangular inequality, we conclude that there exists a positive constant $C$ such that, with $\pr q := \max\{q,1\}$,
\begin{align*}
    \lvert v(0,x) - V_0(x) \rvert &\leq C(1+|x|)^{\pr q}\tau^{1/4}.
\end{align*}

In the next sections, we deal with the approximation of the value $V_0$. Indeed, the focus of this work is the approximation of $V_0$ by deep neural networks: roughly speaking, denoting the approximation by $\hat{V}_0$, we are interested in providing explicit error estimates, in a weak sense, between $\hat{V}_0$ and $V_0$, allowing for possibly degenerate diffusions.

\subsection{Feedback control formulation and algorithm} 

We first recall that the discrete-time control problem associated with $V_0$ admits an equivalent Markov feedback formulation. We denote by $\Gcal$ the set of measurable functions from $\R^d$ to $A$, i.e., 
\begin{align*}
   \Gcal := \Mcal(\R^d;A).
\end{align*}

For a given feedback control policy $\alpha\in\Gcal^N$, that is, $\ma=(\ma_0,\dots,\ma_{N-1})$, where each  $\alpha_k$ belongs to $\Gcal$,
and given $x\in \R^d$,
we consider the process $\widetilde{X}_{t_k}^{x,\alpha}$ expressed by the recursive relation, for $k=0,\dots,N-1$,
\begin{align}\label{eq:EM_scheme}
\begin{cases}
    \wX^{x,\alpha}_{t_{k+1}} &= 
      \wX^{x,\alpha}_{t_k} + b(t_k,\widetilde{X}^{x,\alpha}_{t_k},\alpha_k(\widetilde{X}^{x,\alpha}_{t_k}))\tau
      + \sigma(t_k,\widetilde{X}^{x,\alpha}_{t_k},\alpha_k(\widetilde{X}^{x,\alpha}_{t_k}))\sqrt{\tau}Z_{k+1},\\
    \widetilde{X}^{x,\alpha}_{t_0} &= x,
\end{cases}
\end{align}
where $Z_{k+1}$ are i.i.d. standard normal random vectors in $\R^m$. By the finite-horizon dynamic programming principle (DPP), the value $V_0$ can be equivalently written by restricting the minimization to nonrandomized Markov feedback policies (i.e., $\alpha\in\Gcal^N$), then
\begin{align*}
    V_0(x) = \inf_{\alpha\in\mathcal G^N} \mathbb E \bigg[ \sum_{k=0}^{N-1} f(t_k,\widetilde X^{x,\alpha}_{t_k}, \alpha_k(\widetilde X^{x,\alpha}_{t_k}))\tau + g(\widetilde X^{x,\alpha}_{t_N}) \bigg].
\end{align*}
Moreover, under the compactness of $A$ and the above continuity assumptions on the coefficients, an optimal feedback policy exists (see \cite[Prop.~8.1, 8.2, 8.5 and Cor.~8.1.1]{bertsekas_shreve}).

\medskip
We recall the following commutation formula, also known as a ``randomization formula''. Let $(\Omega_0,\Fcal_0,\PP_0)$ be an additional probability space, 
under which the expectation is indicated by $\E_0$, and let
$\xi \in L^2((\Omega_0,\Fcal_0,\PP_0);\R^d)$,
that is $\E_0[\lvert \xi \rvert^2]<\infty$.
We consider $\widetilde X^{\xi,\alpha}$ on the product probability space $(\Omega_0\times\Omega, \Fcal_0\otimes\Fcal, \PP_0\otimes\PP)$, with initial condition $\widetilde X^{\xi,\alpha}_{t_0}=\xi$.
\begin{proposition}[{\bf Randomization formula}]\label{prop:comm_formula}
Let \Cref{ass:ex_uniq_SDE,ass:cost} hold true, and let $\xi \in L^2((\Omega_0,\Fcal_0,\PP_0);\R^d)$. Then,
\begin{enumerate}
    \item The following commutation formula holds:
    \begin{equation}\label{eq:comm_formula}
    \begin{aligned}
        \E_0[V_0(\xi)] 
            &= \inf_{\alpha\in\cG^N} \E_0[ \tilde{J}(0,\xi,\alpha)]\\
            &= \inf_{\alpha\in\cG^N} \E_0 \bigg[ \E \bigg[
    	   \sum_{k=0}^{N-1}f(t_k, \wX^{\xi,\alpha}_{t_k}, \alpha_k(\wX^{\xi,\alpha}_{t_k}))\tau +  g(\wX^{\xi,\alpha}_{t_N}) \bigg] \bigg].
    \end{aligned}
    \end{equation}
    \item
    There exists an optimal feedback policy
    $\bar\alpha\in\Gcal^N$ minimizing \eqref{eq:comm_formula}. Moreover, for any $\bar\alpha\in\Gcal^N$, the following two are equivalent:
    \begin{enumerate}
        \item $\bar{\alpha} \in \operatorname*{arginf}_{\alpha\in\Gcal^N} \E_0[\tilde{J}(0,\xi,\alpha)]$,
        \item $\tilde{J}(0,x,\bar\alpha) = V_0(x)$ (i.e., $\bar{\alpha} \in \operatorname*{arginf}_{\alpha\in\Gcal^N} \tilde{J}(0,x,\alpha)$), for $\mathcal{L}(\xi)$-a.e. $x \in \R^d$, where $\mathcal{L}(\xi)$ denotes the law of $\xi$.
    \end{enumerate}
\end{enumerate}
\end{proposition}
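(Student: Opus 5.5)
The plan is to reduce everything to the existence of an optimal Markov feedback $\bar\alpha\in\Gcal^N$ that is optimal \emph{simultaneously for every} initial point $x$. Such a policy is available from the DPP and measurable-selection result quoted just before the proposition (Bertsekas--Shreve): $\tilde J(0,x,\bar\alpha)=V_0(x)$ for all $x\in\R^d$.

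First I check integrability so that all expectations make sense. By the linear growth of $b,\sigma,f,g$ and the independence of $Z_{k+1}$ from $\wX_{t_k}$, a discrete Gronwall argument gives $\E[\max_k|\wX^{x,\alpha}_{t_k}|^2]\le C(1+|x|^2)$, uniformly in $\alpha\in\Gcal^N$. Hence $|\tilde J(0,x,\alpha)|\le C(1+|x|)$, and $V_0$ has the same linear growth, so $\E_0|V_0(\xi)|<\infty$ because $\xi\in L^2$. Joint measurability of $(x,\omega)\mapsto\wX^{x,\alpha}_{t_k}(\omega)$ holds since it is a composition of measurable maps. Fubini on the product space then gives $\E_0\E[\cdots]=\E_0[\tilde J(0,\xi,\alpha)]$, i.e.\ $\tilde J(0,\cdot,\alpha)$ evaluated at $\xi$. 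Measurability of $V_0$ itself follows from $V_0=\tilde J(0,\cdot,\bar\alpha)$; alternatively it is the Borel measurable output of the backward DPP recursion $V_N=g$, $V_k(x)=\min_{a\in A}\{f(t_k,x,a)\tau+\E[V_{k+1}(x+b\tau+\sigma\sqrt\tau Z)]\}$. This recursion is continuous in $(x,a)$ by the Lipschitz assumptions and dominated convergence, so the minimum is attained and a Borel minimizer exists.

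Next comes the commutation formula. For every $\alpha\in\Gcal^N$ we have $\tilde J(0,x,\alpha)\ge V_0(x)$ pointwise, so $\E_0[\tilde J(0,\xi,\alpha)]\ge\E_0[V_0(\xi)]$, and taking the infimum gives ``$\ge$''. Conversely, $\E_0[\tilde J(0,\xi,\bar\alpha)]=\E_0[V_0(\xi)]$, which gives ``$\le$''. The same computation shows that $\bar\alpha$ attains the infimum in \eqref{eq:comm_formula}, proving existence.

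For the equivalence, (b)$\Rightarrow$(a) is immediate: integrating $\tilde J(0,x,\bar\alpha)=V_0(x)$ against $\mathcal L(\xi)$ gives the value $\E_0[V_0(\xi)]$, which is the infimum by part 1. For (a)$\Rightarrow$(b), the function $h(x):=\tilde J(0,x,\bar\alpha)-V_0(x)$ is nonnegative and measurable, and $\int h\,\de\mathcal L(\xi)=0$ by (a) together with part 1; hence $h=0$ $\mathcal L(\xi)$-a.e. The main obstacle is the measure-theoretic bookkeeping rather than any estimate: we need a single measurable, everywhere-optimal feedback, and we need $V_0$ to be Borel rather than merely universally measurable. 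I would handle both through the continuity of the DPP recursion, which makes the selection Borel by a standard measurable selection theorem for continuous functions on the compact set $A$.
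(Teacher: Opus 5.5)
Your proposal is correct and follows the same route as the paper: obtain a single feedback that is optimal for every initial point from the DPP and measurable selection, get ``$\ge$'' by pointwise comparison and ``$\le$'' by plugging in that feedback, and deduce the equivalence from the nonnegativity of $\tilde J(0,\cdot,\bar\alpha)-V_0$. Your additional checks are sound details that the paper leaves implicit: the linear growth of $\tilde J$ and $V_0$, joint measurability and Fubini on the product space, and Borel measurability of $V_0$ and of the selector via the continuity of the DPP recursion.
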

\begin{proof}
    By the DPP and a measurable selection argument, using the continuity of the
    coefficients and of the costs with respect to the control variable, together with
    the compactness of $A$ (see \cite{bertsekas_shreve}), there exists an optimal feedback policy $\alpha^*\in\Gcal^N$ such that
    $V_0(x)=\tilde J(0,x,\alpha^*)$, for every $x\in\R^d$. By the definition of $V_0$ as the infimum over piecewise-constant policies, we get $V_0(x) \leq \tilde J(0,x,\alpha)$, for $\alpha\in\Gcal^N$, and integrating with respect to $\mathcal{L}(\xi)$ gives $\E_0[V_0(\xi)] \leq
        \inf_{\alpha\in\Gcal^N}
        \E_0[\tilde J(0,\xi,\alpha)]$.
    The reverse inequality follows by choosing $\alpha=\alpha^*$.

    For the characterization of minimizers, note that $\tilde J(0,x,\alpha)-V_0(x) \geq 0$, thus $\E_0[\tilde J(0,\xi,\bar\alpha)-V_0(\xi)]=0$ if and only if $\tilde J(0,x,\bar\alpha)=V_0(x)$ for $\mathcal{L}(\xi)$-a.e. $x \in \R^d$.
\end{proof}

In this work, we are interested in approximating the value $\E_0[V_0(\xi)]$ and a corresponding optimal feedback control strategy.
Given an approximation space $\hat\cG$ for $\Gcal$ (with $\hat\cG\subseteq\cG$), such as a neural network class, we consider the following approximation procedure.

\begin{algo}\label{algo:general}
Let $\eta>0$ be a given margin of error.
\begin{enumerate}[label=(\roman*)]
    \item Compute an $\eta$-suboptimal minimizer $\hat\ma \in \hat\cG^N$ of the above expectation formula, in the sense that
    \begin{align}\label{eq:algo_suboptimal_policy}
        \E_0[\tilde{J}(0,\xi,\hat\ma)] \leq \inf_{\alpha\in\hat\Gcal^N} \E_0[\tilde{J}(0,\xi,\alpha)] + \eta.
    \end{align}
    \item Define the approximate value induced by $\hat\ma$ as
    \begin{align*}
        \hat V_0(x):= \tilde{J}(0,x,\hat\ma).
    \end{align*}
\end{enumerate}
\end{algo}

\begin{remark}
    The approximate control $\hat\alpha$ is not necessarily unique.
\end{remark} 

\begin{remark}
    Later on, we will choose $\hat\alpha\in\hat\Gcal_L^N$, where $\hat\Gcal_L$ is a class of neural networks with Lipschitz constant bounded by $L$.
    Step $(i)$ can be performed, up to the tolerance $\eta$, by using a stochastic gradient descent algorithm (SGD).
\end{remark}

This procedure reduces the approximation of $\E_0[V_0(\xi)]$ to an optimization problem over the restricted class $\hat\Gcal^N$. The goal of the next section is to estimate the error between the induced approximation $\hat V_0$ and the true value $V_0$ in some averaged norm, in terms of the expressive power of $\hat\Gcal$ (that is, how close $\hat \cG$ is to the true space $\cG$), and the optimization tolerance $\eta$.

\section{Convergence analysis}
\label{sec:convergence_analysis}

Let $L\geq0$ be a given constant and denote by $\Gcal_L \subseteq \Gcal$ the set of measurable functions $\alpha: \R^d \to A$ whose Lipschitz constant is bounded above by $L$, that is,
\begin{align*}
    \Gcal_L := \{ \alpha\in\Gcal \, : \, [\alpha] \leq L \}.
\end{align*}

The first result is a discrete Gronwall-type estimate for the second moment of the difference between two controlled trajectories.
\begin{lemma} [{\bf Gronwall estimate}] \label{lemma:2}
    Under \cref{ass:ex_uniq_SDE}, given $\bar{\alpha}\in\Gcal^N$, $\alpha\in\Gcal_L^N$, and $x,y\in\R^d$, it holds
    \begin{align}\label{eq:L1-1}
        \max_{0 \leq k \leq N}
        \E\Big[ \big|{\widetilde{X}^{y,\alpha}_{t_k}} - {\widetilde{X}^{x,\bar\alpha}_{t_k}}\big|^2\Big]
        \leq e^{C_{1,L}T}\bigg(|y-x|^2 + C_{2,L} \tau \sum_{k=0}^{N-1} \E \Big[ \big| \alpha_k(\widetilde{X}^{x,\bar\alpha}_{t_k}) - \bar\alpha_k(\widetilde{X}^{x,\bar\alpha}_{t_k}) \big|^2 \Big] \bigg),
    \end{align}
    with constants $C_{1,L},C_{2,L} \geq 0$ which depend only on $T$, $L$ and the data.
\end{lemma}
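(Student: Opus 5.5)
Set $X_k := \wX^{y,\alpha}_{t_k}$, $\bar X_k := \wX^{x,\bar\alpha}_{t_k}$, $\delta_k := X_k - \bar X_k$ and $e_k := \E[|\delta_k|^2]$. The plan is a one-step recursion for $e_k$ followed by a discrete Gronwall argument. Subtracting the two Euler--Maruyama recursions \eqref{eq:EM_scheme} gives
\begin{align*}
\delta_{k+1} = \delta_k + \Delta b_k\,\tau + \Delta\sigma_k\,\sqrt{\tau}Z_{k+1},
\end{align*}
with $\Delta b_k := b(t_k,X_k,\alpha_k(X_k)) - b(t_k,\bar X_k,\bar\alpha_k(\bar X_k))$ and $\Delta\sigma_k$ defined analogously. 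Both $X_k$ and $\bar X_k$ are measurable functions of $(Z_1,\dots,Z_k)$, and so are $\Delta b_k,\Delta\sigma_k$ (measurability of $\bar\alpha_k$ suffices here), while $Z_{k+1}$ is independent of them. Expanding the square and taking expectations therefore kills both cross terms involving $Z_{k+1}$, and $\E[|\Delta\sigma_k Z_{k+1}|^2] = \E[|\Delta\sigma_k|^2]$ for the Frobenius norm. This yields
\begin{align*}
e_{k+1} = e_k + 2\tau\,\E[\langle \delta_k,\Delta b_k\rangle] + \tau^2\E[|\Delta b_k|^2] + \tau\,\E[|\Delta\sigma_k|^2].
\end{align*}
The moments are finite by linear growth and $A$ compact, by a standard induction.

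The key step is how to split the control mismatch, since $\bar\alpha$ is only measurable. I would insert the intermediate value $\alpha_k(\bar X_k)$:
\begin{align*}
|\alpha_k(X_k) - \bar\alpha_k(\bar X_k)| \le |\alpha_k(X_k)-\alpha_k(\bar X_k)| + |\alpha_k(\bar X_k)-\bar\alpha_k(\bar X_k)| \le L|\delta_k| + D_k,
\end{align*}
where $D_k := |\alpha_k(\bar X_k)-\bar\alpha_k(\bar X_k)|$. Only the Lipschitz regularity of $\alpha\in\Gcal_L^N$ is used, and the remainder is exactly the quantity on the right of \eqref{eq:L1-1}, evaluated along the reference trajectory $\bar X$. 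By \Cref{ass:ex_uniq_SDE}, with the time arguments equal,
\begin{align*}
|\Delta b_k| \le ([b]_1+[b]_2L)|\delta_k| + [b]_2 D_k,
\end{align*}
and similarly for $\Delta\sigma_k$. Using Young's inequality $2\langle\delta_k,\Delta b_k\rangle \le |\delta_k|^2 + |\Delta b_k|^2$, together with $\tau\le T$ and $(a+b)^2\le 2a^2+2b^2$, we get
\begin{align*}
e_{k+1} \le (1+C_{1,L}\tau)e_k + C_{2,L}\tau\,\E[D_k^2],
\end{align*}
with, for example, $C_{1,L} = 1 + 2(1+T)([b]_1+[b]_2L)^2 + 2([\sigma]_1+[\sigma]_2L)^2$ and $C_{2,L} = 2(1+T)[b]_2^2 + 2[\sigma]_2^2$.

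The discrete Gronwall lemma then closes the argument. Iterating from $e_0 = |y-x|^2$ gives
\begin{align*}
e_k \le (1+C_{1,L}\tau)^k\Big(e_0 + C_{2,L}\tau\sum_{j<k}\E[D_j^2]\Big),
\end{align*}
and $(1+C_{1,L}\tau)^k \le e^{C_{1,L}k\tau} \le e^{C_{1,L}T}$. Extending the sum to $N-1$ and taking the maximum over $k$ gives \eqref{eq:L1-1}.

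I expect the main obstacle to be the lack of regularity of $\bar\alpha$. One cannot compare $\bar\alpha_k(X_k)$ with $\bar\alpha_k(\bar X_k)$, so the order of the splitting matters: the Lipschitz policy must be the one evaluated at both points. The other point needing care is justifying that the martingale cross terms vanish, which uses the independence of $Z_{k+1}$ from $\mathcal F_{t_k}$ and square-integrability of all quantities.
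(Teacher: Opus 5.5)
Your proof is correct and follows essentially the same route as the paper. Both arguments use the one-step second-moment recursion in which the noise cross terms vanish: the paper conditions on $\mathcal F_{t_k}$, while you use the independence of $Z_{k+1}$ directly. Both then split the control mismatch through $\alpha_k(\bar X_k)$, so that only the Lipschitz policy is compared at two points, and iterate using $(1+C_{1,L}\tau)^k\le e^{C_{1,L}T}$. The one difference is that you bound the drift cross term by Young's inequality on $2\langle\delta_k,\Delta b_k\rangle$, whereas the paper expands $\big((1+C_b\tau)|\delta_k|+[b]_2\tau D_k\big)^2$; this only changes the explicit constants.
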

\begin{proof}
    For $0 \leq k \leq N$, let ${y_k := \widetilde{X}^{y,\alpha}_{t_k}}$ and ${x_k := \widetilde{X}^{x,\bar\alpha}_{t_k}}$, which satisfy the recursive equations
    \begin{align*}
        y_{k+1} &= y_k + b_k(y_k,\alpha_k(y_k))\tau + \sigma_k(y_k,\alpha_k(y_k))\sqrt{\tau}Z_{k+1},\\
        x_{k+1} &= x_k + b_k(x_k,\bar\alpha_k(x_k))\tau + \sigma_k(x_k,\bar\alpha_k(x_k))\sqrt{\tau}Z_{k+1},
    \end{align*}
    where $b_k(x,a):=b(t_k,x,a)$, $\sigma_k(x,a):=\sigma(t_k,x,a)$, and $\{Z_i\}_{i=1}^{N}$ are i.i.d. standard normal random vectors in $\R^m$.
    We first write 
    \begin{align*}
        y_{k+1}-x_{k+1} &= y_k-x_k +  \big(b_k(y_k,\alpha_k(y_k)) -  b_k(x_k,\bar\alpha_k(x_k))\big)\tau\\
        &\quad + \big(\sigma_k(y_k,\alpha_k(y_k)) - \sigma_k(x_k,\bar\alpha_k(x_k))\big)\sqrt{\tau}Z_{k+1}.
    \end{align*}
    Taking the conditional expectation with respect to $\Fcal_{t_k}$ and using $\E[Z_{k+1}]=0$ 
    and $\E[|\Pi Z_{k+1}|^2\mid\mathcal F_{t_k}]=|\Pi|^2$ 
    for every $\mathcal F_{t_k}$-measurable matrix $\Pi$, where $|\cdot|$ denotes Frobenius norm on matrices, we deduce
    \begin{multline}\label{eq:L1-2}
    \E[|y_{k+1}-x_{k+1}|^2 \mid \Fcal_{t_k}]
    = |y_k-x_k + (b_k(y_k,\alpha_k(y_k))-b_k(x_k,\bar\alpha_k(x_k)))\tau|^2\\
    + \tau|\sigma_k(y_k,\alpha_k(y_k))-\sigma_k(x_k,\bar\alpha_k(x_k))|^2
    \end{multline}
    Let $C_b:=[b]_1 + L[b]_2$ and $C_\ms:=[\sigma]_1 + L[\sigma]_2$.
    The last term of \cref{eq:L1-2} can be bounded as follows:
    \begin{align*}
        |\sigma_k(y_k,\alpha_k(y_k))-\sigma_k(x_k,\bar\alpha_k(x_k))| \\
	&\hspace{-3cm} \leq |\sigma_k(y_k,\alpha_k(y_k)) - \sigma_k(x_k,\alpha_k(x_k))|
               + |\sigma_k(x_k,\alpha_k(x_k)) - \sigma_k(x_k,\bar\alpha_k(x_k))|\\
	&\hspace{-3cm} \leq C_\ms |y_k-x_k| + [\sigma]_2|\alpha_k(x_k)-\bar\alpha_k(x_k)|.
    \end{align*}
    Hence
    \begin{align}\label{eq:L3-1}
        |\sigma_k(y_k,\alpha_k(y_k))-\sigma_k(x_k,\bar\alpha_k(x_k))|^2
          & \leq 2 C_\ms^2|y_k-x_k|^2 + 2[\sigma]_2^2|\alpha_k(x_k)-\bar\alpha_k(x_k)|^2.
    \end{align}
    Similarly,
    \begin{align*}\label{eq:L3-2-0}
        |y_k-x_k + (b_k(y_k,\alpha_k(y_k))-b_k(x_k,\bar\alpha_k(x_k)))\tau| 
          &\leq (1 + C_b \tau) |y_k-x_k| + [b]_2\tau|\alpha_k(x_k)-\bar\alpha_k(x_k)|,
    \end{align*}
    and we get
    \begin{equation*}
    \begin{aligned}
        &\hspace{1cm}|y_k-x_k + (b_k(y_k,\alpha_k(y_k))-b_k(x_k,\bar\alpha_k(x_k)))\tau|^2 \\
        &\leq (1 + C_b \tau)^2|y_k - x_k|^2 + [b]_2^2 \tau^2 |\alpha_k(x_k)-\bar\alpha_k(x_k)|^2 + [b]_2(1 + C_b\tau)\tau(|y_k-x_k|^2 + |\alpha_k(x_k)-\bar\alpha_k(x_k)|^2)\\
        &\leq \big( (1 + C_b \tau)^2 + [b]_2(1+C_b\tau)\tau \big)|y_k - x_k|^2 + \big( [b]_2^2\tau^2 + [b]_2(1+C_b\tau)\tau \big)|\alpha_k(x_k)-\bar\alpha_k(x_k)|^2\\
        &\leq \big( 1 + \tau(TC_b^2 + 2C_b + [b]_2(1 + C_bT)) \big)|y_k - x_k|^2 + \tau\big( [b]_2^2T + [b]_2(1 + C_bT) \big)|\alpha_k(x_k)-\bar\alpha_k(x_k)|^2,
    \end{aligned}
    \end{equation*}
    where, in the last inequality, we used the fact that $\tau \leq T$. We then have
    \begin{multline}\label{eq:L3-2-1}
    |y_k-x_k + (b_k(y_k,\alpha_k(y_k))-b_k(x_k,\bar\alpha_k(x_k)))\tau|^2\\
    \leq (1 + B_{1,L}\tau)|y_k - x_k|^2 + \tau B_{2,L}|\alpha_k(x_k)-\bar\alpha_k(x_k)|^2
    \end{multline}
    with constants ${B}_{1,L} := TC_b^2 + 2C_b + [b]_2(1 + C_bT)$ and ${B}_{2,L} := [b]_2^2T + [b]_2(1 + C_bT)$.
    
    Combining estimates \eqref{eq:L3-1} and \eqref{eq:L3-2-1} 
    into \cref{eq:L1-2}
    we obtain:
    \begin{align*}
        \E[|y_{k+1}-x_{k+1}|^2 \,|\, \Fcal_{t_k}] &\leq |y_k-x_k|^2 (1 + C_{1,L}\tau) + {C_{2,L}}\tau |\alpha_k(x_k)-\bar\alpha_k(x_k)|^2,
    \end{align*}
    with $C_{1,L} := {B}_{1,L} + 2C^2_\ms$ and 
    $C_{2,L} := {B}_{2,L} + 2[\sigma]_2^2$.
    By induction and the tower property of conditional expectation, for $0 < n \leq N$, we get 
    \begin{align*}
        \E\Big[\big|{\widetilde{X}^{y,\alpha}_{t_n}} - {\widetilde{X}^{x,\bar\alpha}_{t_n}}\big|^2\Big] &\leq (1+C_{1,L}\tau)^n \bigg( |y-x|^2
          + C_{2,L}\tau \sum_{k=0}^{n-1} \E\Big[|\alpha_k({\widetilde{X}^{x,\bar\alpha}_{t_k}})-\bar\alpha_k({\widetilde{X}^{x,\bar\alpha}_{t_k}})|^2\Big] \bigg).
    \end{align*}
    By noting that $(1+C_{1,L}\tau)^n \leq e^{C_{1,L}T}$, for $t_n = n\tau \leq T$, we obtain the desired result.
\end{proof}

\begin{remark}
    The estimate is asymmetric with respect to the two controls: only $\alpha$ is required to be Lipschitz, while the reference control $\bar\alpha$ is merely measurable. This comes from decomposing the difference of the controls along the two trajectories as
    \begin{align*}
        \alpha_k(y_k)-\bar\alpha_k(x_k)
        =
        \big(\alpha_k(y_k)-\alpha_k(x_k)\big)
        +
        \big(\alpha_k(x_k)-\bar\alpha_k(x_k)\big),
    \end{align*}
    so that only the first term is controlled by the Lipschitz constant of $\alpha_k$, whereas the second one is the approximation error along the trajectory driven by the reference control $\bar\alpha$.
\end{remark}

We next derive an estimate for the sum term appearing on the right-hand side of \cref{eq:L1-1}. We first have the following bound.

\begin{lemma}\label{lemma:4}
    Let \cref{ass:ex_uniq_SDE} hold and
    $x\in\R^d$.
    Let $\bar\alpha\in\Gcal^N$ and $\alpha\in\Gcal_L^N$. For every Borel set $D\subseteq\R^d$, we have
    \begin{align}\label{eq:P1}
        \max_{0 \leq k \leq N} \E\Big[ \big|{\widetilde{X}^{x,\alpha}_{t_k}} - {\widetilde{X}^{x,\bar\alpha}_{t_k}}\big|^2\Big] \leq C_{T,L} \bigg(
        \tau \sum_{k=0}^{N-1} \| \ma_k-\bar\ma_k\|^2_{\infty,D}
        + T\diam(A)^2 \max_{0\leq k\leq N-1} \PP[\widetilde{X}^{x,\bar\alpha}_{t_k} \notin D] \bigg),
    \end{align}
    where $C_{T,L}$ is a positive constant depending only on $T$, $L$ and the data, and $\diam(A)$ denotes the diameter of $A$, which is finite since $A$ is assumed compact. Moreover, by $\|\cdot\|_{\infty,D}$ we denote $\|\phi\|_{\infty,D}:=\sup_{z\in D}|\phi(z)|$.
\end{lemma}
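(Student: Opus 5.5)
We apply \Cref{lemma:2} with $y=x$, so the initial term $|y-x|^2$ vanishes. We are left with bounding, for each $k$, the quantity $\E\big[|\alpha_k(\widetilde X^{x,\bar\alpha}_{t_k})-\bar\alpha_k(\widetilde X^{x,\bar\alpha}_{t_k})|^2\big]$ along the reference trajectory driven by the merely measurable policy $\bar\alpha$. This is exactly the form needed, because \Cref{lemma:2} only asks for the Lipschitz property of $\alpha$ and evaluates the control discrepancy along $\widetilde X^{x,\bar\alpha}$.

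For fixed $k$, we write $x_k:=\widetilde X^{x,\bar\alpha}_{t_k}$ and split the expectation over the events $\{x_k\in D\}$ and $\{x_k\notin D\}$, which is legitimate since $D$ is Borel and $x_k$ is a random variable.
\begin{itemize}
\item On $\{x_k\in D\}$, the integrand is bounded by $\sup_{z\in D}|\alpha_k(z)-\bar\alpha_k(z)|^2=\|\alpha_k-\bar\alpha_k\|^2_{\infty,D}$, and the probability factor is at most one.
\item On $\{x_k\notin D\}$, both $\alpha_k(x_k)$ and $\bar\alpha_k(x_k)$ lie in $A$, so the integrand is at most $\diam(A)^2$. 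This contributes $\diam(A)^2\,\PP[x_k\notin D]$.
\end{itemize}
Hence
\[
\E\big[|\alpha_k(x_k)-\bar\alpha_k(x_k)|^2\big]\le \|\alpha_k-\bar\alpha_k\|^2_{\infty,D}+\diam(A)^2\,\PP[x_k\notin D].
\]

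Summing over $k=0,\dots,N-1$ and multiplying by $\tau$ turns the second term into at most $N\tau\,\diam(A)^2\max_k\PP[x_k\notin D]=T\diam(A)^2\max_k\PP[x_k\notin D]$. Plugging this into \eqref{eq:L1-1} gives \eqref{eq:P1} with $C_{T,L}:=e^{C_{1,L}T}C_{2,L}$. This constant depends only on $T$, $L$ and the data.

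There is no genuine obstacle; the argument is a direct consequence of \Cref{lemma:2}. The only points to check are that the sup over $D$ may be infinite, in which case the bound is trivial, and that measurability of $\bar\alpha_k$ makes $\bar\alpha_k(x_k)$ a random variable, so the split of the expectation is well defined.
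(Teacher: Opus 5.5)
Your proposal is correct and matches the paper's proof: apply \Cref{lemma:2} with $y=x$, split $\E\big[|\alpha_k(\widetilde X^{x,\bar\alpha}_{t_k})-\bar\alpha_k(\widetilde X^{x,\bar\alpha}_{t_k})|^2\big]$ over $\{\widetilde X^{x,\bar\alpha}_{t_k}\in D\}$ and its complement using $|\alpha_k-\bar\alpha_k|\le\diam(A)$, and use $N\tau=T$, which gives $C_{T,L}=C_{2,L}e^{C_{1,L}T}$. Your caveat about an infinite supremum is unnecessary: both feedbacks are $A$-valued, so $\|\alpha_k-\bar\alpha_k\|_{\infty,D}\le\diam(A)$ always.
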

\begin{proof}
    By \Cref{lemma:2}, we have
    \begin{align*}
        \max_{0 \leq k \leq N}
        \E\Big[ \big|{\widetilde{X}^{x,\alpha}_{t_k}} - {\widetilde{X}^{x,\bar\alpha}_{t_k}}\big|^2\Big]
        \leq C_{2,L} e^{C_{1,L}T}\ \tau \sum_{k=0}^{N-1}
          \E \Big[ \big| \alpha_k(\widetilde{X}^{x,\bar\alpha}_{t_k}) 
                        - \bar\alpha_k(\widetilde{X}^{x,\bar\alpha}_{t_k}) \big|^2 \Big].
    \end{align*} 
    Let $\beta_k := \alpha_k-\bar\alpha_k$, so that $|\beta_k(x)|\leq \diam(A)$ (because both $\ma_k$ and $\bar\ma_k$ take their
    values in $A$).
    By separating the regions $\{\widetilde{X}_{t_k}^{x,\bar\ma}\in D\}$ and $\{\widetilde{X}_{t_k}^{x,\bar\ma}\notin D\}$, we have
    \begin{align*}
        |\beta_k(\widetilde{X}_{t_k}^{x,\bar\ma})|^2 &\leq \| \beta_k \|^2_{\infty,D} \cdot \mathbf{1}_{\{ \widetilde{X}_{t_k}^{x,\bar\ma} \in D \}} + \diam(A)^2 \cdot \mathbf{1}_{\{ \widetilde{X}_{t_k}^{x,\bar\ma} \notin D \}}\\
        &\leq \| \beta_k \|^2_{\infty,D} + \diam(A)^2 \cdot \mathbf{1}_{\{ \widetilde{X}_{t_k}^{x,\bar\ma} \notin D \}}.
    \end{align*}
    Thus, taking the expectation
    \begin{align} \label{eq:lemma5.5proof}
        \E \big[ |\beta_k(\widetilde{X}_{t_k}^{x,\bar\ma})|^2 \big] &\leq \| \beta_k \|^2_{\infty,D} + \diam(A)^2 \cdot \PP \big[ \widetilde{X}_{t_k}^{x,\bar\ma} \notin D \big].
    \end{align}
    This proves the claim, with $C_{T,L}:= C_{2,L} e^{C_{1,L} T}$.
\end{proof}

We recall the following classical estimate, in the discrete-time case.
\begin{lemma}\label{lemma:3}
    Let \cref{ass:ex_uniq_SDE} hold true and $x\in\R^d$. For every $\alpha\in\Gcal^N$, and for any $p\geq2$,
    \begin{align*}
        \E\bigg[ \max_{0 \leq k \leq N} \big|\widetilde{X}_{t_k}^{x,\alpha} - x \big|^p \bigg]
          \leq  C_{T,p}(1+|x|^p),
    \end{align*}
    where $C_{T,p}$ is a constant that depends only on $T$, $p$, and on the data.
\end{lemma}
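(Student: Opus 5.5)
The plan is the standard discrete Burkholder--Davis--Gundy plus discrete Gronwall argument for the Euler--Maruyama scheme with feedback $\alpha\in\Gcal^N$. The key point is that only measurability of $\alpha_k$ and the uniform-in-$a$ linear growth $|b(t,x,a)|+|\sigma(t,x,a)|\le C(1+|x|)$ are needed. This growth bound follows from \Cref{ass:ex_uniq_SDE} and compactness of $A$, so no Lipschitz property of the feedback enters.

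Write $X_k:=\wX^{x,\alpha}_{t_k}$, $b_j:=b(t_j,X_j,\alpha_j(X_j))$ and $\sigma_j:=\sigma(t_j,X_j,\alpha_j(X_j))$. Telescoping gives
\begin{align*}
X_n-x=\sum_{j=0}^{n-1} b_j\,\tau+\sum_{j=0}^{n-1}\sigma_j\sqrt{\tau}Z_{j+1}=:D_n+M_n .
\end{align*}
Since $X_j$ is $\Fcal_{t_j}$-measurable, $b_j,\sigma_j$ are too, and $Z_{j+1}$ is independent of $\Fcal_{t_j}$. Hence $(M_n)_n$ is a discrete martingale with respect to $(\Fcal_{t_n})_n$. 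Set $\phi(n):=\E[\max_{k\le n}|X_k-x|^p]$. A preliminary step shows $\phi(N)<\infty$: by induction each $X_k\in L^p$, because $|X_{k+1}|\le |X_k|+C(1+|X_k|)(\tau+\sqrt\tau|Z_{k+1}|)$, $Z_{k+1}$ has all moments and is independent of $X_k$. This justifies the Gronwall step later.

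For the drift, H\"older's inequality gives
\begin{align*}
\max_{k\le n}|D_k|^p\le (n\tau)^{p-1}\tau\sum_{j<n}|b_j|^p\le C_p T^{p-1}\tau\sum_{j<n}\bigl(1+|x|^p+|X_j-x|^p\bigr).
\end{align*}
For the martingale, the discrete BDG inequality gives $\E[\max_{k\le n}|M_k|^p]\le C_p\E[(\sum_{j<n}\tau|\sigma_j|^2|Z_{j+1}|^2)^{p/2}]$. Applying H\"older to the sum produces a factor $(n\tau)^{p/2-1}$, which is valid since $p\ge2$. Then conditioning on $\Fcal_{t_j}$ and using $\E|Z_{j+1}|^p=c_{m,p}$ bounds this by $C\,T^{p/2-1}\tau\sum_{j<n}\E|\sigma_j|^p$. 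The linear growth bound then handles this term exactly as for the drift. Summing the two estimates, with $|X_j-x|^p\le\max_{i\le j}|X_i-x|^p$, yields
\begin{align*}
\phi(n)\le C(1+|x|^p)+C\tau\sum_{j<n}\phi(j).
\end{align*}
The discrete Gronwall lemma then gives $\phi(N)\le Ce^{CT}(1+|x|^p)$, with constants depending only on $T$, $p$, $m$ and the linear growth constant.

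The only delicate step is the martingale term. I will apply BDG directly to the discrete martingale, in its vector form applied componentwise, and combine it with the conditional moment bound for the Gaussian increments. An alternative that avoids discrete BDG is to embed the scheme in continuous time by freezing coefficients on $[t_j,t_{j+1})$, so that $M$ becomes a stochastic integral of a piecewise-constant adapted integrand, and then use the continuous BDG inequality. Everything else is routine.
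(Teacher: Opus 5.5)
Your proposal is correct and follows essentially the same route as the paper's appendix proof. You split $\wX^{x,\alpha}_{t_n}-x$ into the drift sum and the discrete martingale, bound the drift by H\"older and the martingale by the discrete BDG (Doob) inequality, use the linear growth of $b,\sigma$ uniformly in $a\in A$, and close with a discrete Gronwall lemma. The one difference is that the paper writes out only $p=2$ and asserts the general case, whereas you carry out the $L^p$ step explicitly (H\"older on the quadratic variation plus the conditional Gaussian moment); this, together with your preliminary integrability check, supplies exactly what the paper leaves implicit.
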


\begin{proof}
     The proof follows from the regularity assumptions on $b$ and $\sigma$, the discrete-time Burkholder-Davis-Gundy inequality, and a discrete Gronwall argument. See, for instance, \cite[Thm. 4.5.4]{kloeden_numerical_1992}, or \cite[Ch.~1, Thm. 6.16]{yong1999stochastic} in the continuous-time setting. For completeness, we report the proof in \Cref{appendix:subsec-1}.
\end{proof}

We then obtain the following intermediate estimate.
\begin{lemma}\label{lemma:5}
    Let \cref{ass:ex_uniq_SDE} hold. Let $R>0$ and let $\xi\in L^p((\Omega_0,\mathcal F_0,\mathbb P_0);\mathbb R^d)$ for some $p\geq2$ be such that $|\xi| \leq R$, $\mathbb P_0$-a.s.
    Let $M>R$ and set $B_M:=B(0,M)$. 
    Then, for every $\alpha\in\Gcal^N$, there exists a constant $C_{T,p}\geq0$, depending only on $T$, $p$, and the data, such that
    \begin{align*}
        \E_0 \bigg[ \max_{0 \leq k \leq N} \PP  \Big[ \widetilde{X}_{t_k}^{\xi,\alpha} \notin B_M \Big] \bigg]
          & \leq \frac{C_{T,p}}{(M-R)^p}(1+\E_0[|\xi|^p]).
    \end{align*}
\end{lemma}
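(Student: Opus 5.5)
We will proceed by conditioning on $\xi$ and using Markov's inequality together with the moment bound of \Cref{lemma:3}. Fix $\omega_0\in\Omega_0$ and set $x:=\xi(\omega_0)$, so that $|x|\leq R$. Because $\xi$ is defined on the auxiliary space $(\Omega_0,\Fcal_0,\PP_0)$ and is independent of the Gaussian increments, the inner probability $\PP[\widetilde X^{\xi,\alpha}_{t_k}\notin B_M]$ is, for fixed $\omega_0$, exactly $\PP[\widetilde X^{x,\alpha}_{t_k}\notin B_M]$. This is a measurable function of $x$, by Fubini on the product space.

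Next we localize. For $k=0,\dots,N$, the event $\{|\widetilde X^{x,\alpha}_{t_k}|>M\}$ is contained in $\{|\widetilde X^{x,\alpha}_{t_k}-x|>M-|x|\}\subseteq\{|\widetilde X^{x,\alpha}_{t_k}-x|>M-R\}$, where we used $|x|\leq R<M$. It is also contained in
\begin{align*}
\Big\{\max_{0\le j\le N}|\widetilde X^{x,\alpha}_{t_j}-x|>M-R\Big\},
\end{align*}
and this event no longer depends on $k$. Markov's inequality with exponent $p$, followed by \Cref{lemma:3}, then gives
\begin{align*}
\max_{0\le k\le N}\PP\big[\widetilde X^{x,\alpha}_{t_k}\notin B_M\big]\le \frac{\E\big[\max_{j}|\widetilde X^{x,\alpha}_{t_j}-x|^p\big]}{(M-R)^p}\le \frac{C_{T,p}(1+|x|^p)}{(M-R)^p}.
\end{align*}
The constant $C_{T,p}$ from \Cref{lemma:3} is uniform over all measurable feedbacks $\alpha\in\Gcal^N$, since it relies only on the linear growth of $b$ and $\sigma$, which is uniform in $a\in A$. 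It is therefore also independent of $x$.

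Finally, we substitute $x=\xi(\omega_0)$ and integrate with respect to $\PP_0$. This yields the stated bound with the factor $1+\E_0[|\xi|^p]$, which is finite since $\xi\in L^p$.

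The only delicate point is the rigorous identification of the inner probability with its value at a fixed initial point, which requires measurability in $x$. We handle this by the product-space construction: $\widetilde X^{\xi,\alpha}_{t_k}$ is a measurable function of $(\xi, Z_1,\dots,Z_k)$ through the measurable maps $\alpha_j$, $b$ and $\sigma$, so Fubini applies. Everything else is routine.
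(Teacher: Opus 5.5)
Your proposal is correct and follows essentially the same route as the paper: apply \Cref{lemma:3} conditionally on $\xi$, use $|\xi|\le R<M$ to get the $k$-uniform inclusion $\{\widetilde X^{\xi,\alpha}_{t_k}\notin B_M\}\subseteq\{\max_{j}|\widetilde X^{\xi,\alpha}_{t_j}-\xi|\ge M-R\}$, apply Markov's inequality with exponent $p$, and integrate with respect to $\PP_0$. Your product-space and Fubini justification of measurability is a more careful version of the paper's phrase ``applied conditionally on $\xi$''.
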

\begin{proof}
    By the estimate in \Cref{lemma:3}, applied conditionally on $\xi$, for $\mathbb P_0$-a.e. realization of $\xi$,
    there exists a positive constant $C_{T,p}$ such that
    \begin{align*}
        {\E}\bigg[ \max_{0 \leq k \leq N} |\widetilde{X}_{t_k}^{\xi,\alpha} - \xi|^p \bigg] 
          & \leq C_{T,p} (1+|\xi|^p).
    \end{align*}
    Since $|\xi|\le R$ and $M>R$, the implication
    \begin{align*}
        \widetilde{X}_{t_k}^{\xi,\alpha}\notin B_M
        \quad\Longrightarrow\quad
        |\widetilde{X}_{t_k}^{\xi,\alpha}-\xi|
        \ge M-|\xi|
        \ge M-R
    \end{align*}
    holds for every $k$. Hence,
    \begin{align*}
        \max_{0\le k\le N} \PP\Big[
        \widetilde{X}_{t_k}^{\xi,\alpha}\notin B_M
        \Big]
        &\leq
        \PP\bigg[ \max_{0\le k\le N}
        |\widetilde{X}_{t_k}^{\xi,\alpha}-\xi| \ge M-R \bigg] \\
        &\leq
        \frac{\E\Big[ \max_{0\le k\le N}
        |\widetilde{X}_{t_k}^{\xi,\alpha}-\xi|^p \Big]}{(M-R)^p}
        \leq
        \frac{C_{T,p}}{(M-R)^p} (1+|\xi|^p),
    \end{align*}
    where we used Markov's inequality.
    The result follows by taking the $\E_0$-expectation.
\end{proof}

We can now state our main result.

\begin{theorem} [\bf{Error estimate}] \label{thm:error_estimate}
    Assume that \cref{ass:ex_uniq_SDE,ass:cost} hold. Let $\xi \in L^2((\Omega_0,\Fcal_0,\PP_0);\R^d)$, such that $|\xi|\le R$, $\PP_0$-a.s., for some $R\ge0$.
    Let $\varepsilon>0$ and let $\bar\ma^\varepsilon \in \cG^N$ be an $\varepsilon$-optimal feedback policy for the randomized problem related to $V_0(\xi)$, namely $\E_0[\tilde{J}(0,\xi,\bar\alpha^\varepsilon)] \leq \E_0[V_0(\xi)] + \varepsilon$. Let $L\geq 0$ and let $\hat\Gcal_L\subseteq\Gcal_L$ be the approximation class.
    For any $\eta>0$, consider an $\eta$-suboptimal control policy $\hat\ma\in \hat\cG_L^N$ obtained as in \Cref{algo:general}, and define $\hat V_0(x):=\tilde J(0,x,\hat\alpha)$.
    Then there exists a positive constant $C_{T,L}$, depending only on $T$, $L$ and the data, and independent of $N$, 
    such that, for any  $M>R$,
    \begin{align*}
        \E_0\big[|\hat{V}_0(\xi) - V_0(\xi)|\big] & \\
         & \hspace{-2cm}
        \leq  C_{T,L} \left[
          \left( \tau \sum_{0 \leq k \leq N-1} d^2_{\infty,B_M}(\bar\alpha^\varepsilon_k, \hat\Gcal_L) \right)^{1/2} 
          + \frac{\diam(A)}{M-R} (1+\E_0[|\xi|^2])^{1/2} \right] + \varepsilon + \eta,
    \end{align*}
    where $d_{\infty,B_M}(\phi,\hat\Gcal_L) := \inf_{\psi\in\hat\Gcal_L} \sup_{z\in B_M} |\phi(z)-\psi(z)|$.
\end{theorem}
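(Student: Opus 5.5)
Since $\hat V_0(x)=\tilde J(0,x,\hat\alpha)\ge V_0(x)$ for every $x$, the absolute value can be dropped:
\[
\E_0[|\hat V_0(\xi)-V_0(\xi)|]=\E_0[\tilde J(0,\xi,\hat\alpha)]-\E_0[V_0(\xi)].
\]
We then fix an arbitrary $\alpha\in\hat\Gcal_L^N$. The $\eta$-suboptimality \eqref{eq:algo_suboptimal_policy} and the $\varepsilon$-optimality of $\bar\alpha^\varepsilon$ give
\[
\E_0[\tilde J(0,\xi,\hat\alpha)]-\E_0[V_0(\xi)] \le \E_0\big[\tilde J(0,\xi,\alpha)-\tilde J(0,\xi,\bar\alpha^\varepsilon)\big]+\varepsilon+\eta .
\]
It therefore suffices to bound the cost difference between a Lipschitz policy $\alpha$ and the merely measurable $\bar\alpha^\varepsilon$, and at the end take the infimum over $\alpha$ componentwise.

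For fixed $x$, we compare the two costs term by term. Write $x_k=\wX^{x,\bar\alpha^\varepsilon}_{t_k}$ and $y_k=\wX^{x,\alpha}_{t_k}$. By \Cref{ass:cost},
\[
|f(t_k,y_k,\alpha_k(y_k))-f(t_k,x_k,\bar\alpha^\varepsilon_k(x_k))| \le ([f]_1+L[f]_2)|y_k-x_k|+[f]_2|\alpha_k(x_k)-\bar\alpha^\varepsilon_k(x_k)|.
\]
This uses the same splitting $\alpha_k(y_k)-\bar\alpha_k^\varepsilon(x_k)=(\alpha_k(y_k)-\alpha_k(x_k))+(\alpha_k(x_k)-\bar\alpha^\varepsilon_k(x_k))$ as in \Cref{lemma:2}. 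The terminal term is bounded by $[g]|y_N-x_N|$. Taking expectations, applying Cauchy--Schwarz and summing with weights $\tau$ (with $N\tau=T$), we get
\[
|\tilde J(0,x,\alpha)-\tilde J(0,x,\bar\alpha^\varepsilon)|\le C\Big(\max_k \E[|y_k-x_k|^2]^{1/2}+\big(\tau\textstyle\sum_k \E|\alpha_k(x_k)-\bar\alpha^\varepsilon_k(x_k)|^2\big)^{1/2}\Big).
\]
Here $C$ depends only on $T$, $L$ and the data, not on $N$. Both contributions are then controlled with $D=B_M$. The first is bounded by \Cref{lemma:4}. The second is bounded directly by \eqref{eq:lemma5.5proof}. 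Together this yields, pointwise in $x$,
\[
|\tilde J(0,x,\alpha)-\tilde J(0,x,\bar\alpha^\varepsilon)| \le C_{T,L}\Big(\tau\textstyle\sum_k\|\alpha_k-\bar\alpha^\varepsilon_k\|^2_{\infty,B_M}+T\diam(A)^2\max_k\PP[x_k\notin B_M]\Big)^{1/2}.
\]

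Next we integrate in $\xi$. Applying Jensen (or Cauchy--Schwarz) under $\E_0$ moves the expectation inside the square root. The deterministic sum is unaffected. For the localization term we use $\max_k\PP[x_k\notin B_M]$ evaluated at $x=\xi$, together with \Cref{lemma:5} with $p=2$, which gives
\[
\E_0\Big[\max_k\PP\big[\wX^{\xi,\bar\alpha^\varepsilon}_{t_k}\notin B_M\big]\Big]\le C(M-R)^{-2}\big(1+\E_0|\xi|^2\big).
\]
We then use $\sqrt{a+b}\le\sqrt a+\sqrt b$. The localization contribution becomes $\diam(A)(M-R)^{-1}(1+\E_0|\xi|^2)^{1/2}$, up to a constant.

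Finally we take the infimum over $\alpha\in\hat\Gcal_L^N$. The components $\alpha_k$ can be chosen independently, so $\inf_\alpha\sum_k\|\alpha_k-\bar\alpha_k^\varepsilon\|^2_{\infty,B_M}=\sum_k d^2_{\infty,B_M}(\bar\alpha^\varepsilon_k,\hat\Gcal_L)$. This step assumes $\hat\Gcal_L^N$ is the product class, which it is by definition. The main obstacle I anticipate is the handling of the non-Lipschitz reference control inside the running cost. The $f$-difference must be evaluated along the $\bar\alpha^\varepsilon$-trajectory, since that is the trajectory for which \Cref{lemma:5} controls the exit probability; the trajectory of $\alpha$ does not work for this purpose. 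Getting this ordering right is exactly what keeps the constant independent of $N$ and makes the estimate asymmetric in the two policies.
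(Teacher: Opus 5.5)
Your proof is correct and follows essentially the same route as the paper: you drop the absolute value, use $\eta$- and $\varepsilon$-suboptimality against an arbitrary $\alpha\in\hat\Gcal_L^N$, split the control difference along the $\bar\alpha^\varepsilon$-trajectory, invoke \Cref{lemma:4}, \eqref{eq:lemma5.5proof} and \Cref{lemma:5}, and take the infimum componentwise; the only difference is cosmetic, since you bound the first moment of the cost difference pointwise in $x$ and then apply Jensen under $\E_0$ to the concave square root, whereas the paper first passes to $(\E_0[|\tilde J(0,\xi,\alpha)-\tilde J(0,\xi,\bar\alpha^\varepsilon)|^2])^{1/2}$ and bounds the squared difference. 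One small correction to your closing comment: \Cref{lemma:5} holds for every policy in $\Gcal^N$, so the real reason the approximation error must be evaluated along the $\bar\alpha^\varepsilon$-trajectory is that only $\alpha$ is Lipschitz (the other splitting would require a Lipschitz bound on $\bar\alpha^\varepsilon$), while the $N$-independence of the constant comes from the $\tau$-weighted sums and the discrete Gronwall bound rather than from this ordering.
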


\begin{remark}[Error terms interpretation]
\label{rem:error_terms}
    On the right-hand side of the above error estimate:
    
    \begin{itemize}
    \item 
    The first term measures how well the reference policy $\bar\alpha^\varepsilon$ can be approximated, on the localization ball $B_M$, by $L$-Lipschitz policies in the neural network class $\hat\Gcal_L\subseteq\Gcal_L$.

    \item The estimate holds for any $\varepsilon$-optimal $\bar\alpha^\varepsilon\in\Gcal^N$, but is informative only when $\bar\alpha^\varepsilon$ can be chosen with Lipschitz components: for a merely measurable policy, such as a discontinuous bang-bang feedback, the first term may remain of order $\diam(A)$. The existence of such Lipschitz $\varepsilon$-optimal policies is the object of \Cref{prop:intermediate_result_convergence} below.

    \item 
    The parameter $\varepsilon$ measures the suboptimality of the chosen reference policy $\bar\alpha^\varepsilon$. \Cref{prop:intermediate_result_convergence} below shows that, for every $\varepsilon>0$, one can choose an $\varepsilon$-optimal reference policy in $\Gcal_L^N$, for a suitable $L$. In the convergence proof, $\varepsilon$ will be used precisely in this sense: it measures the cost of replacing measurable feedback policies by Lipschitz feedback policies. In particular, if an optimal feedback policy belongs to $\Gcal_L^N$, then one may take $\varepsilon=0$.
    
    \item
    The term proportional to $(M-R)^{-1}$ accounts for the probability that the stochastic process, starting in $B_R$, leaves the localization ball $B_M$ over the time horizon $T$.
    In deterministic settings, such as in \cite{bok-pro-war-23,bok-war-25}, this localization term can often be made zero by choosing $B_M$ large enough to contain the reachable set over the time horizon. In the stochastic case, one instead controls the probability of leaving $B_M$.

    \item
    The term $\eta$ accounts for the optimization error in the computation of the approximate policy $\hat\alpha$, (e.g., the convergence of an SGD algorithm).
    
    \end{itemize}
\end{remark}

\begin{remark}[Dependence on the Lipschitz constant]
\label{rem:lip_const_dependence}
The constants in the stability estimate have the form $C_{T,L}=C_{2,L}\exp(C_{1,L}T)$, with $C_{1,L}$ containing quadratic terms in $L$, hence $C_{T,L}$ may grow exponentially as $L$ increases.
This dependence should be kept in mind when interpreting the approximation term and the parameter $\varepsilon$. 
If an optimal, or near-optimal, feedback policy is Lipschitz with a moderate Lipschitz constant, then one may choose the reference policy with fixed $L$,
and the estimate yields a meaningful quantitative bound.
For example, in the linear--convex setting of~\cite{guo_hu_zhang_2023}, under additional structural assumptions, 
the existence of Lipschitz continuous optimal feedback controls is proved, together with stability properties. 
On the other hand, when the optimal feedback is discontinuous, for instance of bang--bang type, making $\varepsilon$
small may require Lipschitz approximations with increasingly large Lipschitz constants (cf.\ \Cref{rem:non-quantitative-L}).
In that case, the decrease of the approximation or suboptimality error must be balanced against the growth of $C_{T,L}$.
\end{remark}

\begin{remark}[Degenerate dynamics and pointwise approximation]
\label{rem:supremum_distance_degeneracy}
    The distance $d_{\infty,B_M}$ is defined through the pointwise supremum norm on $B_M$, rather than a Lebesgue essential supremum or an $L^p$ norm. This is not a technical detail: since the controlled dynamics may be degenerate or deterministic, the law of the discrete-time state process may be singular with respect to the Lebesgue measure (e.g., supported on a lower-dimensional set). Two policies that coincide Lebesgue-a.e. may then differ precisely on the support of the state process and induce different values, so that only a pointwise uniform approximation of $\bar\alpha^\varepsilon_k$ guarantees a control of the error, whatever the law of the state process.
This is also why our estimate does not rely on the existence of a transition probability (from step $\widetilde{X}_{t_n}$ to $\widetilde{X}_{t_{n+1}}$) associated with an explicit bounded density, unlike density-based estimates such as those used in~\cite{hure_deep_2021}.
The framework therefore covers possibly degenerate diffusions and includes the purely deterministic case, where the transition law is generally not associated with a density.
\end{remark}

\begin{proof}[Proof of Theorem~\ref{thm:error_estimate}]
    Since $\hat{\Gcal}_L\subseteq\Gcal$, the policy $\hat\alpha$ is admissible for the discrete feedback problem, hence $\hat V_0(\xi)-V_0(\xi)\geq0$, $\mathbb P_0$-a.s., and therefore $\E_0\big[|\hat V_0(\xi)-V_0(\xi)|\big] = \E_0\big[\hat V_0(\xi)-V_0(\xi)\big]$.
    The argument reduces this $L^1(\mathbb P_0)$ error to an $L^2(\mathbb P_0)$ estimate for differences of cost functionals, by the Cauchy--Schwarz inequality.

    Throughout the proof, we omit the superscript $\varepsilon$ and simply write
    $\bar\alpha$ for the $\varepsilon$-optimal feedback control
    $\bar\alpha^\varepsilon$. By its $\varepsilon$-suboptimality, we have
    \begin{align*}
        \E_0[\hat{V}_0(\xi) - V_0(\xi)] \leq \E_0[\hat{V}_0(\xi) - \tilde{J}(0,\xi,\bar\alpha)] + \varepsilon,
    \end{align*}
    which, together with the $\eta$-suboptimality of the control policy $\hat\ma\in \hat\cG_L^N$, leads to
    \be
        \E_0[\hat{V}_0(\xi) - V_0(\xi)] 
          & \leq &  \inf_{\ma\in \hat\cG_L^N}  \E_0[\tilde J(0,\xi, \ma) - \tilde J(0,\xi,\bar \ma)] + \varepsilon + \eta \nonumber\\
          & \leq & \inf_{\ma\in \hat\cG_L^N} (\E_0[|\tilde J(0,\xi, \ma) - \tilde J(0,\xi,\bar \ma)|^2] )^{1/2} + \varepsilon + \eta.
          \label{eq:bound3}
    \ee
    Let $\Ecal_\alpha:= |\tilde{J}(0,\xi,\alpha) - \tilde{J}(0,\xi,\bar\alpha)|^2$,
    where $\alpha \in \hat{\Gcal}_L^N$. We aim to bound $\inf_{\ma\in \hat\cG_L^N} (\E_0[\Ecal_\ma])^{1/2}$.
    By Jensen’s and Cauchy–Schwarz's inequalities:
    \beno 
        \Ecal_\alpha & \leq &
         \E\bigg[ \Big| 
         \sum_{k=0}^{N-1} \tau  
         \big( f(t_k, \widetilde{X}_{t_k}^{\xi,\alpha}, \alpha_k(\widetilde{X}_{t_k}^{\xi,\alpha})) 
             - f(t_k, \widetilde{X}_{t_k}^{\xi,\bar\alpha}, \bar\alpha_k(\widetilde{X}_{t_k}^{\xi,\bar\alpha})) \big) 
             + \big( g(\widetilde{X}_{t_N}^{\xi,\alpha}) - g(\widetilde{X}_{t_N}^{\xi,\bar\alpha}) \big) \Big|^2 \bigg] 
             \\
        &\leq & 2 \E\bigg[ N\tau^2 \sum_{k=0}^{N-1} 
          | f(t_k, \widetilde{X}_{t_k}^{\xi,\alpha}, \alpha_k(\widetilde{X}_{t_k}^{\xi,\alpha}))
          - f(t_k, \widetilde{X}_{t_k}^{\xi,\bar\alpha}, \bar\alpha_k(\widetilde{X}_{t_k}^{\xi,\bar\alpha})) |^2 \bigg] 
          + 2 \E \Big[ | g(\widetilde{X}_{t_N}^{\xi,\alpha}) - g(\widetilde{X}_{t_N}^{\xi,\bar\alpha}) |^2 \Big]
          \\
        &\leq & 2 T \E\bigg[ \tau \sum_{k=0}^{N-1} 
          | f(t_k, \widetilde{X}_{t_k}^{\xi,\alpha}, \alpha_k(\widetilde{X}_{t_k}^{\xi,\alpha}))
          - f(t_k, \widetilde{X}_{t_k}^{\xi,\bar\alpha}, \bar\alpha_k(\widetilde{X}_{t_k}^{\xi,\bar\alpha})) |^2 \bigg] 
          + 2 \E \Big[ | g(\widetilde{X}_{t_N}^{\xi,\alpha}) - g(\widetilde{X}_{t_N}^{\xi,\bar\alpha}) |^2 \Big].
    \eeno
    By considering the Lipschitz continuity assumptions of $f,g$ and the control policies, we get
    \beno
          | f(t_k, \widetilde{X}_{t_k}^{\xi,\alpha}, \alpha_k(\wX_{t_k}^{\xi,\alpha}))
          - f(t_k, \widetilde{X}_{t_k}^{\xi,\bar\alpha}, \bar\alpha_k(\wX_{t_k}^{\xi,\bar\alpha})) |
          &  \leq
             C_1 \ |\wX_{t_k}^{\xi,\alpha} - \wX_{t_k}^{\xi,\bar\alpha}|
         + [f]_2 \ |\alpha_k(\wX_{t_k}^{\xi,\bar\alpha}) - \bar\alpha_k(\wX_{t_k}^{\xi,\bar\alpha})|,
    \eeno
    where $C_1:=[f]_1 + L[f]_2$,
    and then
    \begin{align}
        \Ecal_\ma &\leq 2T \cdot \E \bigg[  \tau
        \sum_{k=0}^{N-1} \Big( 
           2 C_{1}^2\ |\wX_{t_k}^{\xi,\alpha} - \wX_{t_k}^{\xi,\bar\alpha}|^2 
         + 2[f]_2^2 \ | \alpha_k(\wX_{t_k}^{\xi,\bar\alpha}) - \bar\alpha_k(\wX_{t_k}^{\xi,\bar\alpha})|^2 
         \Big) \bigg] 
         + 2[g]^2 \E\Big[ |\widetilde{X}_{t_N}^{\xi,\alpha} - \widetilde{X}_{t_N}^{\xi,\bar\alpha}|^2 \Big]  \nonumber \\
        & \leq (4T^2 C_{1}^2 + 2[g]^2) \max_{0 \leq k \leq N} \E \Big[ |\widetilde{X}_{t_k}^{\xi,\alpha} - \widetilde{X}_{t_k}^{\xi,\bar\alpha}|^2 \Big] 
        + 4T[f]_2^2 \ \tau \sum_{k=0}^{N-1}
         \E \Big[ | \alpha_k(\widetilde{X}_{t_k}^{\xi,\bar\alpha}) - \bar\alpha_k(\widetilde{X}_{t_k}^{\xi,\bar\alpha}) |^2 \Big].
         \label{eq:bound1}
    \end{align}
    We bound the term
    $\E [ | \alpha_k(\widetilde{X}_{t_k}^{\xi,\bar\alpha}) - \bar\alpha_k(\widetilde{X}_{t_k}^{\xi,\bar\alpha}) |^2 ]$ as in 
    \eqref{eq:lemma5.5proof}
    and obtain
    \begin{align}
        \tau \sum_{k=0}^{N-1} \E
           \Big[ | \alpha_k(\widetilde{X}_{t_k}^{\xi,\bar\alpha}) - \bar\alpha_k(\widetilde{X}_{t_k}^{\xi,\bar\alpha}) |^2 \Big] 
        \leq \tau \sum_{k=0}^{N-1}  \| \alpha_k - \bar\alpha_k \|^2_{\infty,B_M} 
          + T \diam(A)^2 \max_{0 \leq k \leq N-1} \PP \big[ \widetilde{X}_{t_k}^{\xi,\bar\alpha} \notin B_M \big].
        \label{eq:bound2}
    \end{align}
    Combining \eqref{eq:bound1}, \eqref{eq:bound2} and \Cref{lemma:4} applied conditionally on $\xi$,
    for $\mathbb P_0$-a.e. realization of $\xi$, yields
    \begin{align*}
        \Ecal_\ma &\leq C_{T,L} \ \bigg(
        \tau \sum_{k=0}^{N-1} \| \ma_k-\bar\ma_k\|^2_{\infty,B_M}
        + T\diam(A)^2 \max_{0\leq k\leq N-1} \PP[\widetilde{X}^{\xi,\bar\alpha}_{t_k} \notin B_M] \bigg),
    \end{align*}
    for a constant $C_{T,L}$ depending only on $T$, $L$, and the data.
    By integrating with respect to $\PP_0$, we get from \Cref{lemma:5}
    \begin{align*}
        \E_0[\Ecal_\ma] & \leq C_{T,L} \ \bigg(
        \tau \sum_{k=0}^{N-1} \| \ma_k-\bar\ma_k\|^2_{\infty,B_M}
        + T\diam(A)^2 \frac{C_{T,2}}{(M-R)^2}(1+\E_0[|\xi|^2]) \bigg).
    \end{align*}
    Finally, in the bound \eqref{eq:bound3}, 
    taking the infimum over $\ma\in \hat\cG_L^N$ corresponds to
    \begin{align*}
        \inf_{\ma\in \hat\cG_L^N} \tau \sum_{k=0}^{N-1}\| \ma_k - \bar{\ma}_k \|^2_{\infty,B_M} = \tau\sum_{k=0}^{N-1} d^2_{\infty,B_M}(\bar{\ma}_k,\hat\Gcal_L).
    \end{align*}
    Using $\sqrt{u+v}\leq\sqrt u+\sqrt v$ for $u,v\ge0$ to bound $(\E_0[\Ecal_\alpha])^{1/2}$, we obtain the stated estimate (upon enlarging the $C_{T,L}$ constant if necessary).
\end{proof}

The previous error estimate now allows us to prove
the convergence of the approximated value $\hat V_0$ (computed with \Cref{algo:general}), towards $V_0$, in $L^1(\PP_0)$ for compactly supported initial distributions.

\begin{theorem} [\bf{Convergence}] \label{thm:convergence}
    Assume that \cref{ass:ex_uniq_SDE,ass:cost} hold. Let $\xi \in L^2((\Omega_0,\Fcal_0,\PP_0);\R^d)$, compactly supported.
    Let the time grid, equivalently $N\geq1$, be fixed.
    Let $(\hat\Gcal_{L}^{\Theta})_{\Theta\ge1}$, with $\hat\Gcal_L^\Theta\subseteq\Gcal_L$ for every $\Theta$, be a family of approximation classes such that, for every $L\ge0$, every $M\ge0$, and every $L$-Lipschitz function $\ma:B_M\rightarrow A$,
    \begin{align} \label{eq:nn-convergence-hypo}
        \lim_{\Theta \to \infty} d_{\infty,B_M}(\alpha, \hat\Gcal^\Theta_L) = 0.
    \end{align}
    Then, for every $\delta>0$, there exist $L\ge0$, $\bar\Theta\ge1$, and $\bar\eta>0$ such that, for every $\Theta \geq \bar\Theta$ and every $\eta\in(0,\bar\eta]$, the $\eta$-suboptimal approximation $\hat V_0^{\Theta,\eta}$ computed by \Cref{algo:general} over $(\hat\Gcal_{L}^\Theta)^N$ (that is, satisfying, as in \eqref{eq:algo_suboptimal_policy}, $\E_0[\hat{V}_0^{\Theta,\eta}] \leq \inf_{\alpha\in(\hat\Gcal^\Theta_L)^N} \E_0[\tilde{J}(0,\xi,\alpha)] + \eta$) satisfies
    \begin{align*}
        \E_0\big[ |\hat V_0^{\Theta,\eta}(\xi)-V_0(\xi)| \big] \leq \delta.
    \end{align*}
\end{theorem}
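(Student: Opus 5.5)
The plan is to combine \Cref{thm:error_estimate} with a Lipschitz approximation result, which the paper announces as \Cref{prop:intermediate_result_convergence}: for every $\varepsilon>0$ there exist $L\ge0$ and an $\varepsilon$-optimal reference policy $\bar\alpha^\varepsilon\in\Gcal_L^N$ for the randomized problem, i.e.\ $\E_0[\tilde J(0,\xi,\bar\alpha^\varepsilon)]\le \E_0[V_0(\xi)]+\varepsilon$. Given $\delta>0$, I first fix $R$ with $|\xi|\le R$ a.s.\ (compact support), set $\varepsilon:=\delta/4$, and obtain $L$ and $\bar\alpha^\varepsilon\in\Gcal_L^N$. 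From this point on $L$ is frozen, so the constant $C_{T,L}$ in \Cref{thm:error_estimate} is a fixed number. This ordering matters because $C_{T,L}$ may blow up with $L$ (\Cref{rem:lip_const_dependence}).

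Next I choose the localization radius. Since $\E_0[|\xi|^2]\le R^2$, I take $M>R$ large enough that $C_{T,L}\,\diam(A)(M-R)^{-1}(1+R^2)^{1/2}\le\delta/4$. Each component $\bar\alpha^\varepsilon_k$, restricted to $B_M$, is an $L$-Lipschitz map $B_M\to A$. Hypothesis \eqref{eq:nn-convergence-hypo} therefore gives, for each of the finitely many $k=0,\dots,N-1$ (here $N$ fixed is essential), a threshold $\Theta_k$ beyond which $d_{\infty,B_M}(\bar\alpha^\varepsilon_k,\hat\Gcal^\Theta_L)\le\delta/(4C_{T,L}\sqrt{T})$. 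Setting $\bar\Theta:=\max_k\Theta_k$, for all $\Theta\ge\bar\Theta$ the first term of \Cref{thm:error_estimate} is at most $C_{T,L}(\tau N)^{1/2}\delta/(4C_{T,L}\sqrt T)=\delta/4$. Finally I set $\bar\eta:=\delta/4$.

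Applying \Cref{thm:error_estimate} with $\hat\Gcal_L=\hat\Gcal_L^\Theta\subseteq\Gcal_L$, this $\varepsilon$, $M$ and any $\eta\le\bar\eta$ yields the bound $\delta/4+\delta/4+\delta/4+\delta/4=\delta$. The constant depends only on $T$, $L$ and the data, not on $\Theta$ or $\eta$, so the bound is uniform in $\Theta\ge\bar\Theta$ and $\eta\in(0,\bar\eta]$.

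The main obstacle is the existence of Lipschitz $\varepsilon$-optimal feedbacks. If \Cref{prop:intermediate_result_convergence} is not already available, I would prove it in the following steps.
\begin{enumerate}
\item Start from an optimal measurable feedback $\alpha^*\in\Gcal^N$ (measurable selection).
\item By Lusin's theorem, applied with respect to the finite measures given by the laws of the relevant states, approximate each $\alpha^*_k$ by continuous $A$-valued maps. Convexity of $A$ lets one project onto $A$.
\item Approximate these continuous maps uniformly on compacts by Lipschitz maps.
\item Show that the cost converges, using stability of the scheme in the control.
\end{enumerate}
The delicate point is step 4: the laws of the states depend on the policy itself, and with degenerate noise they may be singular. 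I would therefore proceed backward in $k$, using continuity of the value-to-go and a Lusin set chosen for the law of $\widetilde X_{t_k}$ under the already-modified policy at earlier steps. Alternatively, I would run an induction forward in $k$, invoking \Cref{lemma:2} with a measurable reference, combined with the localization of \Cref{lemma:5} to handle mass outside the compacts.
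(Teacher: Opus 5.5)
Your proof is correct and follows the paper's argument essentially verbatim. You fix $\varepsilon=\delta/4$ and take the Lipschitz $\varepsilon$-optimal policy from \Cref{prop:intermediate_result_convergence}, which freezes $L$ and hence $C_{T,L}$; you then choose $M>R$, obtain $\bar\Theta$ from \eqref{eq:nn-convergence-hypo} over the finitely many components, set $\bar\eta=\delta/4$, and apply \Cref{thm:error_estimate}.

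Your backward sketch of that proposition is also the paper's route. There the relevant law is simply that of $\widetilde X_{t_j}$ under the original policy, since earlier feedbacks are untouched, and the constant $C_j$ depends only on the Lipschitz constants of the later, already-regularized feedbacks. The forward alternative you mention would not close: the error introduced at a step is propagated by later feedbacks whose Lipschitz constants are not yet fixed.
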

In other words, there exist sequences $L_j\ge0$, $\Theta_j\to\infty$, and $\eta_j\downarrow0$ such that
\begin{align*}
    \E_0\big[ |\hat V_0^{\Theta_j,\eta_j}(\xi)-V_0(\xi)| \big] \xrightarrow[j \to \infty]{} 0.
\end{align*}

In order to prove this convergence result, we first need an intermediate result
showing that the value of the discrete-time control problem can be approximated
arbitrarily well by using Lipschitz-continuous feedback controls. 

\begin{proposition}\label{prop:intermediate_result_convergence}
    Assume that \cref{ass:ex_uniq_SDE,ass:cost} hold. Let $\xi \in L^2((\Omega_0,\Fcal_0,\PP_0);\R^d)$. Then, for any $\varepsilon>0$, there exists a constant $L\geq0$ and $\bar{\alpha}\in\Gcal_L^N$, such that
    \begin{align*}
        \E_0[\tilde{J}(0,\xi,\bar{\alpha})] \leq \E_0[V_0(\xi)] + \varepsilon.
    \end{align*}
\end{proposition}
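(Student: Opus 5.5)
Starting from an optimal measurable feedback $\alpha^*\in\Gcal^N$, given by \Cref{prop:comm_formula}, I will approximate each component $\alpha^*_k$ by Lipschitz maps, and I will prove convergence of the costs by a backward induction over the time steps. Lemma~\ref{lemma:2} cannot be used directly: it requires the reference policy to be Lipschitz, and the error it controls is evaluated along the trajectory of the merely measurable policy. The induction avoids this by replacing one component at a time, from the last step to the first.

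\emph{Setup.} Fix $\varepsilon>0$. For any policy $\alpha$, let $\mu_k^\alpha$ denote the law of $\wX^{\xi,\alpha}_{t_k}$ under $\PP_0\otimes\PP$. This is a probability measure on $\R^d$ with finite second moment, by Lemma~\ref{lemma:3} and the fact that $\xi\in L^2$.

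\emph{Step 1: Lusin-type approximation.} For a finite Borel measure $\mu$ and a measurable $\phi:\R^d\to A$, I claim there is a sequence of Lipschitz maps $\phi^j:\R^d\to A$ with $\phi^j\to\phi$ in $\mu$-measure. To see this, use Lusin's theorem to find a compact set $K$ with $\mu(K^c)$ small on which $\phi$ is continuous. Extend $\phi|_K$ by Tietze (componentwise), then approximate the extension uniformly on a large ball by Lipschitz functions, for instance by mollification or inf-convolution. Finally compose with the projection $P_A$ onto the convex compact set $A$. This projection is $1$-Lipschitz and keeps the map $A$-valued, which is exactly where convexity of $A$ from \Cref{ass:ex_uniq_SDE} enters.

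\emph{Step 2: Stability under an index-$k$ change.} Suppose the policies $\alpha$ and $\alpha'$ agree except in their $k$-th component. Then the two trajectories coincide up to time $t_k$, and both have law $\mu_k$ there. From step $k+1$ on, I will use a one-step version of the argument in Lemma~\ref{lemma:2}. For it to apply, the components $\alpha_{k+1},\dots,\alpha_{N-1}$ must already be Lipschitz. This is why the induction runs backward: at stage $k$ I replace $\alpha^*_k$ by a Lipschitz $\phi^j_k$ while the later components are already Lipschitz. The mismatch at step $k$ satisfies
\[
\E\big|\wX_{t_{k+1}}-\wX'_{t_{k+1}}\big|^2\le C\,\E_{\mu_k}\big[|\phi^j_k-\alpha^*_k|^2\big],
\]
which comes from the Lipschitz continuity of $b$ and $\sigma$ in $a$ at a common state. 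The later steps then propagate this by the Gronwall estimate with the Lipschitz controls $\alpha_{k+1},\dots,\alpha_{N-1}$. The running cost at step $k$ is controlled by $[f]_2$ in the same way, and the later costs by the Lipschitz continuity of $f$ and $g$. Since $|\phi^j_k-\alpha^*_k|\le\diam(A)$, convergence in $\mu_k$-measure gives $\E_{\mu_k}[|\phi^j_k-\alpha^*_k|^2]\to0$ by dominated convergence. Hence the expected cost changes by less than $\varepsilon/N$ for $j$ large.

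\emph{Step 3: Induction.} Define $\alpha^{(N)}:=\alpha^*$. For $k=N-1,\dots,0$, let $\alpha^{(k)}$ be $\alpha^{(k+1)}$ with its $k$-th component replaced by a Lipschitz map chosen through Step~1 for the measure $\mu_k^{\alpha^{(k+1)}}$. The measure depends only on the components $0,\dots,k-1$, which are still those of $\alpha^*$. After $N$ replacements, $\bar\alpha:=\alpha^{(0)}$ consists of Lipschitz maps. Taking $L$ to be the maximum of their Lipschitz constants, we get $\bar\alpha\in\Gcal_L^N$ and
\[
\E_0[\tilde J(0,\xi,\bar\alpha)]\le\E_0[\tilde J(0,\xi,\alpha^*)]+\varepsilon=\E_0[V_0(\xi)]+\varepsilon.
\]

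\emph{Main obstacle.} The hardest part is Step~2. The key point is that only policies that are Lipschitz after index $k$ are ever compared, so the Gronwall constant $C_{T,L}$ is legitimate at each stage. The constant does grow with the Lipschitz constants already chosen, but each stage only needs $j$ large for that fixed constant, so this causes no circularity. The $L^2$ integrability of $\xi$ ensures that all expectations are finite.
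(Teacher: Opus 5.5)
Your proposal is correct and follows essentially the same route as the paper: a backward, one-component-at-a-time replacement of a (near-)optimal measurable feedback by Lipschitz $A$-valued maps. At each stage $k$ the cost change is controlled by the $L^2(\mu_k)$-distance between the old and new component, through a stability estimate that only involves the already-Lipschitz later components, and convexity of $A$ enters via the $1$-Lipschitz projection. The minor differences are that you get density of Lipschitz maps through Lusin, Tietze and convergence in measure (the paper uses density of $C_c$ in $L^2(\mu)$ plus mollification), and that you start from an exactly optimal feedback rather than an $\varepsilon/2$-optimal one.
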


\begin{remark}
    Equivalently, for every $\varepsilon>0$ there exists $L\ge0$ such that
    \begin{align*}
        \E_0[V_0(\xi)] \geq \inf_{\alpha\in\Gcal_L^N}\E_0[\tilde{J}(0,\xi,\alpha)] - \varepsilon.
    \end{align*}
    In other words, Lipschitz feedback policies are dense at the level of the value $V_0(\xi)$, in the class of measurable feedback policies.
\end{remark}

\begin{proof}
    \newcommand{\cM}{\mathcal{M}}
    We prove the result by a density argument, presented in Appendix \Cref{lemma:density_argument_law} with respect to the laws of the discretized controlled process. In this proof, the convexity of $A\subset\R^k$ is needed to ensure that the approximation can be chosen to be $A$-valued.

    By the randomization formula (\Cref{prop:comm_formula}), there exists a measurable feedback policy $\alpha=(\alpha_0,\dots,\alpha_{N-1})\in\Gcal^N$ such that
    \begin{align}\label{eq:value_approx}
        \E_0[V_0(\xi)] 
        \leq
        \E_0[\tilde J(0,\xi,\alpha)]
        \leq
        \E_0[V_0(\xi)] + \frac{\varepsilon}{2},
    \end{align}
    (the left inequality is just a consequence of the definitions).
    We show that this policy can be approximated, in value, by a Lipschitz feedback policy.

    We now regularize the components of $\alpha$ backward in time. Set $\alpha^{(N)}:=\alpha$. Suppose that, for some $j\in\{0,\dots,N-1\}$, the feedbacks at times $j+1,\dots,N-1$ have already been replaced by Lipschitz feedbacks, while the previous ones are left unchanged. We denote the corresponding policy by
    \begin{align*}
        \alpha^{(j+1)}
        :=
        (\alpha_0,\dots,\alpha_j,\bar\alpha_{j+1},\dots,\bar\alpha_{N-1}), \qquad \text{where $\bar\alpha_{j+1},\dots,\bar\alpha_{N-1}$ are Lipschitz.}
    \end{align*}
    For a given $\bar \ma_j$ (to be defined later on), let us consider the policy obtained by replacing the $j$-th component by $\bar\alpha_j$:
    \begin{align*}
        \alpha^{(j)}
        :=
        (\alpha_0,\dots,\alpha_{j-1},\bar\alpha_j,
        \bar\alpha_{j+1},\dots,\bar\alpha_{N-1}).
    \end{align*}
    Since the policies $\alpha^{(j)}$ and $\alpha^{(j+1)}$ coincide up to time $t_j$, the corresponding discretized trajectories also coincide up to time $t_j$. The first difference between them is produced at the step from $t_j$ to $t_{j+1}$, and is controlled by \Cref{eq:controlled_diff_lip_control}.
    Since the feedback controls at times $j+1,\dots,N-1$ are Lipschitz, the discrete stability estimate for the Euler scheme propagates this error from $t_{j+1}$ to $T$. Consequently, using also the Lipschitz continuity of $f$ and $g$, there exists a finite constant $C_j\geq0$, depending only on the data, on $T$, and on the Lipschitz constants of $\bar\alpha_{j+1},\dots,\bar\alpha_{N-1}$ (and independent neither of $\bar \ma _j$ nor of $\ma_0,\dots,\ma_{j-1}$)
    such that
    \begin{align}
        \label{eq:intermed-0}
        \E_0[|
        \tilde J(0,\xi,\alpha^{(j)})
        -
        \tilde J(0,\xi,\alpha^{(j+1)})
        |]
        \leq
        C_j
        \Big(
        \E_0\E\Big[ \Big|
           \bar\alpha_j\big(\widetilde X_{t_j}^{\xi,\alpha^{(j+1)}}\big) - \alpha_j\big(\widetilde X_{t_j}^{\xi,\alpha^{(j+1)}}\big)
        \Big|^2 \Big] 
        \Big)^{1/2}.
    \end{align}
    Let $\mu_j = \mathcal{L}(\widetilde X_{t_j}^{\xi,\alpha^{(j+1)}}) = \mathcal{L}(\widetilde X_{t_j}^{\xi,\alpha})$ denote the law under $\PP_0\otimes\PP$.
    Applying \Cref{lemma:density_argument_law}, with
    $\mu=\mu_j$ and $\phi=\alpha_j$, we can choose a Lipschitz continuous map
    $\bar\alpha_j:\R^d\to A$ such that
    \begin{align}\label{eq:controlled_diff_lip_control}
        \Big(\int_{\R^d}
        |\bar\alpha_j(x)-\alpha_j(x)|^2\,\mu_j(\de x)
        \Big)^{1/2}
        \equiv
        \Big( \E_0\E\Big[
        \Big| \bar\alpha_j\big(\widetilde X_{t_j}^{\xi,\alpha^{(j+1)}}\big) - \alpha_j\big(\widetilde X_{t_j}^{\xi,\alpha^{(j+1)}}\big) \Big|^2
        \Big] \Big)^{1/2}
        \leq
        \frac{\eps}{2 C_j N}.
    \end{align}
    Then we have 
    \begin{align*}
        \E_0[| \tilde J(0,\xi,\alpha^{(j)}) - \tilde J(0,\xi,\alpha^{(j+1)}) |] \leq \frac{\eps}{2 N}.
    \end{align*}
    The procedure is applied backward, successively for $j=N-1,N-2,\dots,1,0$. 
    Suming up the bounds, and using the fact that $\ma^{(N)}\equiv \ma$, we obtain
    an approximate feedback control $\bar\alpha := (\bar\alpha_0,\dots,\bar\alpha_{N-1}) \in \Gcal_L^N$, 
    with $L:=\max_{0\leq k\leq N-1}[\bar\alpha_k]$, and such that
    \begin{align*}
          \E_0[| \tilde J(0,\xi,\bar\alpha) - \tilde J(0,\xi,\alpha) |] \leq \frac{\varepsilon}{2}.
    \end{align*}
    Therefore, together with \cref{eq:value_approx}, this leads to the desired estimate.
\end{proof}

\begin{remark}[Non-quantitative nature of the Lipschitz approximation]
\label{rem:non-quantitative-L}
The construction above is qualitative. It shows that, for every $\varepsilon>0$,
one may choose an $\varepsilon$-optimal feedback policy with $L$-Lipschitz
components, but it does not provide a quantitative bound on the corresponding
Lipschitz constant $L=L(\varepsilon)$, which may blow up as $\varepsilon\to0$. 
Consequently, the convergence result of \Cref{thm:convergence} does
not provide a rate in terms of the size of the approximation class; we refer to
\Cref{rem:lip_const_dependence} for structural situations in which $L$ can be
fixed a priori, allowing for quantitative bounds.
\end{remark}

\begin{proof}[Proof of Theorem~\ref{thm:convergence}]    
    Let $\delta>0$. Set $\varepsilon=\delta/4$. By \Cref{prop:intermediate_result_convergence}, there exist $L\geq0$ and an $\varepsilon$-suboptimal Lipschitz control $\bar\alpha \in \Gcal_L^N$.
    Since $\xi$ is compactly supported, there exists $R\geq0$ such that $\xi\in B_R$, $\PP_0$-a.s. Choose then $M>R$ large enough, such that
    \begin{align*}
        C_{T,L}\frac{\diam(A)}{M-R}(1+\E_0[|\xi|^2])^{1/2} \leq \frac{\delta}{4}.
    \end{align*}
    Set $\bar\eta := \delta/4$. By the approximation property of $(\hat\Gcal^\Theta_{L})_{\Theta\ge1}$, there exists $\bar\Theta\ge1$ such that, for every $\Theta\ge\bar\Theta$,
    \begin{align*}
        C_{T,L}
        \bigg(
        \tau\sum_{k=0}^{N-1}
        d^2_{\infty,B_M}
        (\bar\alpha_k,\hat\Gcal^\Theta_{L})
        \bigg)^{1/2}
        \leq
        \frac{\delta}{4}.
    \end{align*}
    Applying \Cref{thm:error_estimate}, for any $\Theta\geq\bar\Theta$, $\eta\in(0,\bar\eta]$,
    we obtain $\E_0[|\hat{V}_0^{\Theta,\eta}(\xi) - V_0(\xi)|] \leq \delta$,
    which proves the convergence result.
\end{proof}

\begin{remark}[Higher-order time discretizations]
\label{rem:higher-order-schemes}
   The convergence analysis has been presented for the Euler–Maruyama scheme to keep the notation simple, but the same approach extends to other explicit one-step discretization schemes. In particular, the discrete Gronwall estimate, the localization argument, and the policy-approximation estimate remain valid (up to different constants) for any scheme whose one-step map is stable with respect to both the state variable and the feedback control evaluations. Higher-order weak schemes could also be considered in this framework; see, e.g., \cite[Chs.~14-15]{kloeden_numerical_1992}, \cite{milstein2004stochastic}, and \cite{talay1990expansion}.
\end{remark}

\begin{remark}[Lipschitz-constrained architectures]
\label{rem:NN-lip-architecture}
    The approximation assumption \eqref{eq:nn-convergence-hypo} can be satisfied by known Lipschitz-constrained neural network architectures.
    In particular, norm-constrained GroupSort networks, introduced in \cite{anil2019sorting}, are universal approximators of Lipschitz functions on compact sets. Quantitative approximation results for Lipschitz functions by GroupSort networks are further developed in \cite{tanielian2021approximating}. Therefore, the class $\hat\Gcal_L$ appearing in the convergence theorem can be realized by existing neural-network architectures with an explicit (a priori) control of the Lipschitz constant.

    In the numerical experiments below, we consider standard feedforward neural networks with smooth Lipschitz activation functions, such as \texttt{sigmoid}, \texttt{tanh}, and \texttt{SiLU}. For fixed finite parameters, these networks define Lipschitz feedback maps, although their Lipschitz constants are not explicitly a priori constrained during training. This choice is made for computational convenience.
\end{remark}

\section{Neural-network algorithm}
\label{sec:numerical_algorithm}

We now describe the neural-network implementation used to approximate the time-discrete stochastic control problem associated with the value function $V_0$ introduced above.
The method is based on a direct parametrization of the feedback policy by a neural network and on the minimization of the corresponding simulated cost.

For a given feedback control $\alpha\in\Gcal^N$ and given $\xi \in L^2((\Omega_0,\Fcal_0,\PP_0);\R^d)$, compactly supported on a ball $B_R$ of radius $R>0$, we consider the stochastic process $\widetilde{X}_{t_k}^{\xi,\alpha}$ over the time grid $\{t_k:=k\tau\}_{k=0}^{N}$, for $N \in \N \backslash \{0\}$ and $\tau:=T/N$, generated by the time-discrete dynamics introduced in \Cref{eq:EM_scheme}. Here, for simplicity, we present the Euler--Maruyama scheme; the generalization to higher-order schemes follows according to \Cref{rem:higher-order-schemes}.
In view of the commutation formula in \Cref{prop:comm_formula}, the training objective is the randomized cost $\E_0[\tilde{J}(0,\xi,\alpha)]$.

The convergence analysis of \Cref{sec:convergence_analysis} applies to piecewise-constant feedback policies $\alpha=(\alpha_0,\ldots,\alpha_{N-1})\in\mathcal G^N$, independently of the neural parametrization used to represent them. One may either use a single time-state network $\alpha^\theta(t,x)$ and set $\alpha_k=\alpha^\theta(t_k,\cdot)$ (Examples~1 and~2), or a separate network per time step, $\alpha_k=\alpha_k^{\theta_k}$ (Example~3).
The former shares parameters across time steps, reduces the number of trainable parameters, and introduces an implicit regularization in time, which can be advantageous when the optimal feedback depends regularly on time. The latter yields a larger and more flexible approximation class, which can be preferable when the feedback or the admissible control constraints vary irregularly in time, at the price of a number of trainable parameters that typically grows linearly with the number of time steps.
In both cases, the error estimates apply to the induced discrete policy, provided it belongs to the approximation class of the convergence theorem. We stress that, even with time-step-specific networks, the training remains global in time: all feedback components are optimized simultaneously through the simulated total cost, rather than by a backward DPP recursion as in \cite{bachouch_deep_2022,hure_deep_2021}.

Let $\Theta$ denote the trainable parameters of a neural parametrization and set
\begin{align*}
    \alpha^\Theta=(\alpha_0^\Theta,\ldots,\alpha_{N-1}^\Theta), \qquad \alpha_k^\Theta:\mathbb R^d\to A.
\end{align*}
This notation covers both parametrizations discussed above 
(in the time-state network, $\alpha_k^\Theta(x)=\alpha^\theta(t_k,x)$ with $\Theta=\theta$, whereas in the time-step-specific network, $\alpha_k^\Theta(x)=\alpha_k^{\theta_k}(x)$ with $\Theta=(\theta_0,\ldots,\theta_{N-1})$).

The expectation in the training objective is estimated using $M_{\mathrm{batch}}>0$ simulated trajectories for the process $\widetilde{X}^{\xi,\alpha^\Theta}$. The Monte Carlo estimator associated with the cost functional for a given parametrized policy $\alpha^\Theta$ is then given by
\begin{align}\label{eq:losses}
    \widehat{J}(0,\xi,\alpha^\Theta) := \frac{1}{M_{\mathrm{batch}}}\sum_{m=1}^{M_{\mathrm{batch}}} \bigg[ \sum_{k=0}^{N-1} f(t_k, \wX^{\xi,\alpha^\Theta}_{t_k,m}, \alpha^\Theta_k(\wX^{\xi,\alpha^\Theta}_{t_k,m}))\tau + g(\wX^{\xi,\alpha^\Theta}_{t_N,m}) \bigg],
\end{align}
where $(\wX^{\xi,\alpha^\Theta}_{t_k,m})_{k=0}^{N}$ denotes the $m$-th sampled trajectory, starting at the point $\wX^{\xi,\alpha^\Theta}_{t_0,m} \in B_R$, sampled from the distribution $\mathcal{L}(\xi)$.

The training of the parametrized feedback policy $\alpha^\Theta$ updates the parameters $\Theta$ according to a stochastic gradient-based approximation method, minimizing the empirical loss \eqref{eq:losses}. The resulting training procedure is summarized in \Cref{algo_direct_policy_learning}.

\begin{algorithm}[!ht]
\caption{Direct neural-network policy learning}
\label{algo_direct_policy_learning}
    \SetKwInOut{Input}{Input}
    \SetKwInOut{Output}{Output}
    \Input{Dynamics $b$, $\sigma$; costs $f$, $g$; step-size $\tau$; $\mathcal{L}(\xi)$ s.t. $\operatorname{supp}(\mathcal L(\xi)) \subseteq B_R$ for $R\geq 0$}
    \Input{Control $\alpha^{\Theta}$; learning rate $\gamma$; $M_{\mathrm{batch}}$; $\mathit{N_{\mathrm{epoch}}}$}
    \For{$i = 1, \ldots, \mathit{N_{\mathrm{epoch}}}$}{
        Initialize $v^i = 0$ (cost of the batch)\\ 
        \For{$m = 1, \ldots, M_{\mathrm{batch}}$}{
            Sample $x^{i,m}$ in $B_R$ according to $\Lcal(\xi)$\\
            Generate i.i.d. standard Gaussian random vectors $ \{Z^{i,m}_k \}_{k=1}^{N}$\\
            Initialize $v^{i,m} = 0$ (cost of the trajectory)\\
            \For{$k = 0, \ldots, N-1$}{
                $t_k \leftarrow k \tau$\\
                $a^{i,m}_k \leftarrow \alpha^\Theta_k(x^{i,m})$\\
                $v^{i,m} \leftarrow v^{i,m} + f(t_k, x^{i,m}, a^{i,m}_k) \tau$\\
                $x^{i,m} \leftarrow x^{i,m} + b(t_k, x^{i,m}, a^{i,m}_k) \tau + \sigma(t_k, x^{i,m}, a^{i,m}_k) \sqrt{\tau} Z^{i,m}_{k+1}$
            }
        $v^{i,m} \leftarrow v^{i,m} + g(x^{i,m})$
        }
    $v^i \leftarrow \frac{1}{M_{\mathrm{batch}}}\sum_{m=1}^{M_{\mathrm{batch}}} v^{i,m}$\\
    $\Theta \leftarrow \Theta - \gamma \nabla_{\Theta} v^i$ (update parameters, where  $v^i=v^i(\Theta)$)\\
    }
    \Output{$\widehat{\Theta}$ trained parameters; $\widehat{\alpha}=\alpha^{\widehat{\Theta}}$ approximate feedback policy}
\end{algorithm}

\begin{remark}
    Although the theoretical formulation distinguishes between the auxiliary expectation $\E_0$, associated with the randomized initial condition $\xi$, and the expectation $\E$, associated with the noise of the controlled dynamics, the numerical implementation estimates the resulting product expectation directly. More precisely, at each training iteration we sample independent initial conditions $\xi^m\sim\mathcal L(\xi)$ and, conditionally on each $\xi^m$, independent noise increments driving the time-discrete dynamics. Thus, from \Cref{algo_direct_policy_learning}, $\widehat{J}(0,\xi,\alpha^\Theta)$ is a Monte Carlo estimator of $\E_0[\tilde{J}(0,\xi,\alpha^\Theta)]$.
    Moreover, in the theoretical analysis, the optimization error is represented by the parameter $\eta$. 
    In practice, finite-sample effects from the Monte Carlo approximation and the convergence of the stochastic optimization procedure both contribute to the observed training error.
\end{remark}

\section{Numerical examples}
\label{sec:numerical_examples}
We present three numerical examples illustrating the proposed methodology.
The experiments are designed to identify, and separate as far as possible, the
different error components: the time discretization of the controlled dynamics
(Examples~1 and~2, comparing the Euler--Maruyama scheme with the weak second-order
scheme of \Cref{appendix:KPscheme}), 
the restriction to piecewise-constant policies
(Example~3), and the neural-network approximation and optimization errors.
In all experiments, after training, the reported values are computed by re-evaluating the trained policy on a large independent sample, thereby reducing the Monte Carlo error.

We recall that the classical weak second-order convergence theory
(cf.\ \cite[Ch.~15]{kloeden_numerical_1992}) requires smooth coefficients,
including the feedback policy inserted into the dynamics. In our experiments the
trained network policy is smooth, but the optimal feedback may be discontinuous;
the second-order behavior observed below (in the first two examples) is therefore not covered by the classical
theory and should be regarded as empirical.

\paragraph{Computational setup.}
All numerical experiments were performed on a Linux GPU server using one \texttt{NVIDIA Tesla V100-PCIE-16GB} GPU per run. The server was equipped with three such GPUs and used NVIDIA driver version \texttt{535.309.01} and CUDA version \texttt{12.2}.

\subsection{Example 1: Stochastic radial target problem}
\label{sec:target_circle_example}
This first example is a benchmark target problem with a controlled SDE involving a degenerate diffusion.
We aim to observe the convergence order of time discretization schemes.

\paragraph{The model.}
Let $B$ be a standard real-valued Brownian motion. For $0 \leq t \leq T$, we consider the 2-dimensional process $X_s = (X^1_s, X^2_s)$, defined by the coupled system of stochastic differential equations
\begin{align*}
    \begin{cases}
        \de X^1_s &= - X^2_s \de B_s + \alpha^1_s \de s, \quad s\in(t,T]\\
        \de X^2_s &= \phantom{+} X^1_s \de B_s + \alpha^2_s \de s, \quad s\in(t,T]\\
        X_t &= x \in \R^2,
    \end{cases}
\end{align*}
where the control processes $\alpha_s = ( \alpha^1_s, \alpha^2_s ) = ( \alpha^1(s, X_s), \alpha^2(s, X_s))$ are given in feedback form. When needed, we will denote $X^{t,x,\alpha}_s=X_s$. Set $A:=B_M$ for a given positive constant $M>0$, to constrain the policies into the set $\Acal_M := \{ \alpha \in \Acal \,:\, |\alpha_s|\le M,\, \text{for all } s\in[t,T] \}$.
Given a target radius $r_0>0$, we define
\begin{align*}
    J(t,x,\alpha) := \E \big[ g(X^{t,x,\alpha}_T) \big], \qquad
    g(x) := (|x|-r_0)^2, \qquad V(t,x) = \inf_{\alpha\in\Acal_M} J(t,x,\alpha).
\end{align*}

This model can be interpreted as a target problem, in which we aim to reach the target radius $r_0$, within a finite time horizon $T>0$ and bounded control. Indeed, for $t\in[0,T]$, this defines a \emph{backward reachable set}, described as a \emph{zero-level set} for the value function, given by
\begin{align*}
    \Zcal_t := \{ x\in\R^2 \,:\, V(t,x)=0 \}.
\end{align*}
The condition $x\in\mathcal{Z}_t$ represents the states at time $t$ from which the terminal target can be reached exactly, namely those for which there exists an admissible control $\alpha$, with $|\alpha_s|\leq M$, for all $s\in[t,T]$, such that $|X_T^{t,x,\alpha}|=r_0$. See, for instance, \Cref{fig:trajectories_circle_ex}.

The value function is characterized, in the viscosity sense, by the HJB equation
\begin{align*}
    \begin{cases}
        - \partial_t V(t,x) &= \inf\limits_{|a| \leq M}\bigg\{ \langle a,\, \nabla V (t,x) \rangle + \dfrac{1}{2}\mathrm{Tr}[\sigma\sigma^\top(t,x) \nabla^2 V (t,x)] \bigg\}, \quad x\in\R^2,\ t\in[0,T] \\
        V(T,x) &= (|x|-r_0)^2, \quad x\in \R^2.
    \end{cases}
\end{align*}

An analytic expression for $V$, together with $\mathcal{Z}_t$, can be obtained, see \Cref{appendix:target_ex_computations}.
Notice that, whenever $\nabla V(t,x)\neq 0$, a possible optimal control satisfies
\begin{align}\label{eq:alpha_opt_circle}
   \alpha^*(t,x) = -M \frac{\nabla V (t,x)}{|\nabla V (t,x)|}.
\end{align}
In contrast, in the interior of the backward reachable set, $V$ is locally constant (hence $\nabla V=0$ wherever the classical gradient is defined), therefore the HJB does not select a unique optimal control there. In that region, the terminal target can be reached exactly, and any admissible control whose radial component steers the radius to $r_0$ is optimal; tangential components may be added as long as the constraint $|\alpha_s|\le M$ remains satisfied.

\paragraph{Numerical implementation.}
We consider the time horizon $T=0.5$, the target radius $r_0=1$, and the control bound $M=1$. The feedback control is parametrized by a deep feedforward neural network with input $(t,x_1,x_2)$, 3 hidden layers of $20$ neurons each, and using the $\mathtt{tanh}$ activation function. 
The output layer $\Pi_A(u):=\frac{u}{\max\{1,|u|\}}$ is used in order to generate controls in the unit ball. 

For each value of $N$ and for each time-discretization scheme, the policy is trained independently. During training, the initial condition is sampled uniformly on the centered ball $B_{2.5}$. We use the Adam optimizer with learning rate $\gamma=10^{-3}$, together with large Monte Carlo batches of size $10^5$, over $10^5$ training iterations. This choice is made in order to reduce the stochastic noise in the optimization and to make the error due to the time-discretization scheme clearly visible in the convergence study.

We perform simulations using both the Euler--Maruyama first order scheme and the explicit weak second order scheme of Platen. Since the Brownian noise is one-dimensional, we use the simplified version of the second-order scheme (see \eqref{2order_scheme_circle_example} in Appendix \ref{appendix:KPscheme}; see also \cite[Eq.~(15.1.1)]{kloeden_numerical_1992}).
After training, the value induced by the learned policy is recomputed pointwise, for each fixed initial condition, using $10^5$ independent Monte Carlo trajectories. This significantly reduces the Monte Carlo contribution in the final evaluation, so that the observed errors primarily reflect the approximation properties of the time-discretization scheme and, to a lesser extent, the neural-network approximation and optimization errors.

\medskip
\Cref{fig:trajectories_circle_ex} provides a first qualitative validation of the learned policy. The simulated trajectories show the expected radial behavior: initial states lying outside the backward reachable set are driven as far as possible toward the target circle, while those starting inside the reachable annulus can be steered so as to attain the target exactly within the terminal time.

The contour plot of the value induced by the trained neural-network policy is also in very good agreement with the theoretical structure of the problem, exhibiting an approximately rotationally invariant profile, with nearly concentric level sets. This is a nontrivial qualitative feature, since no symmetry is explicitly imposed in the neural-network architecture. The learned approximation therefore recovers not only the correct magnitude of the value function, but also its expected geometric structure.

\begin{figure}[!t]
    \centering
    \includegraphics[width=0.42\linewidth]{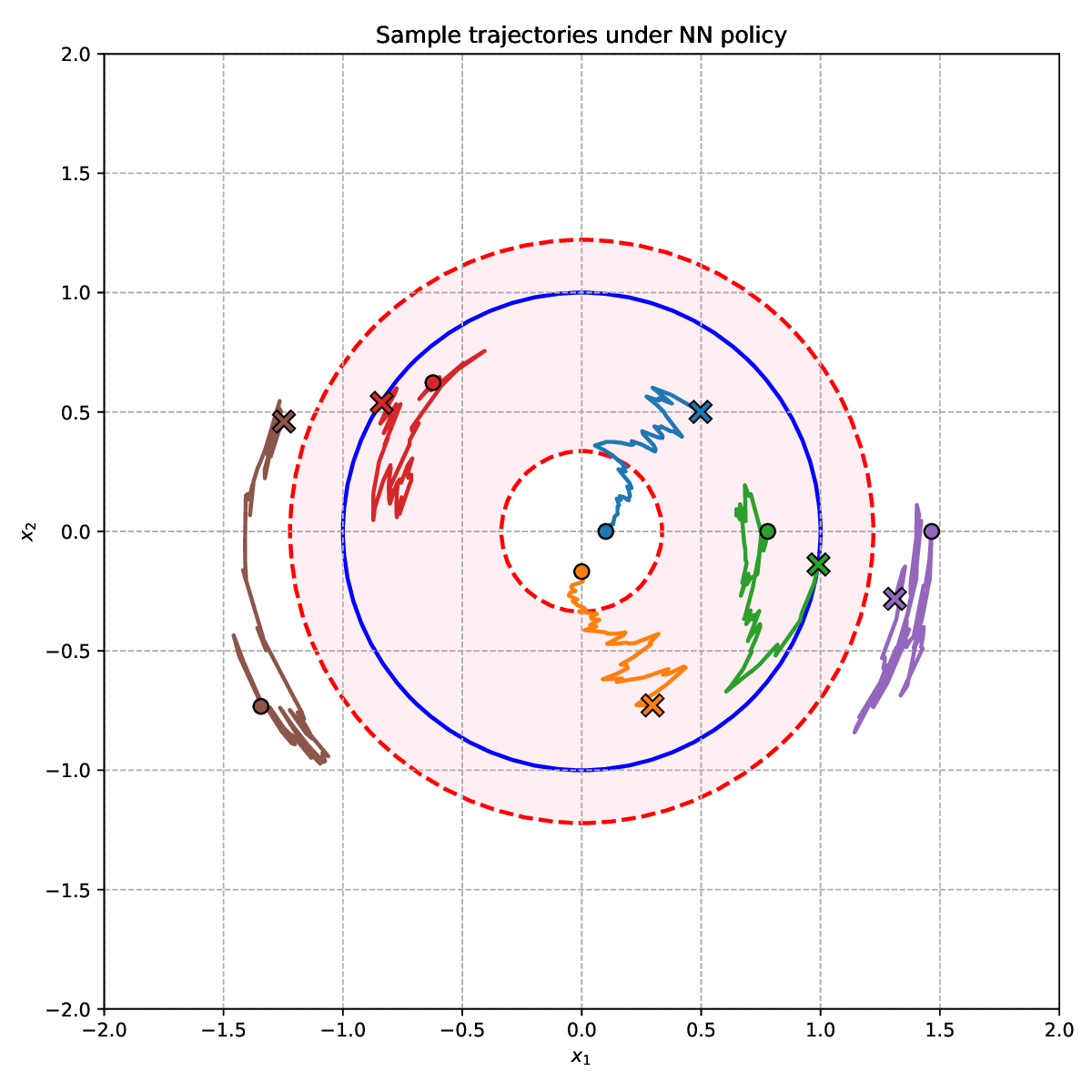}
    \hfill
    \includegraphics[width=0.42\linewidth]{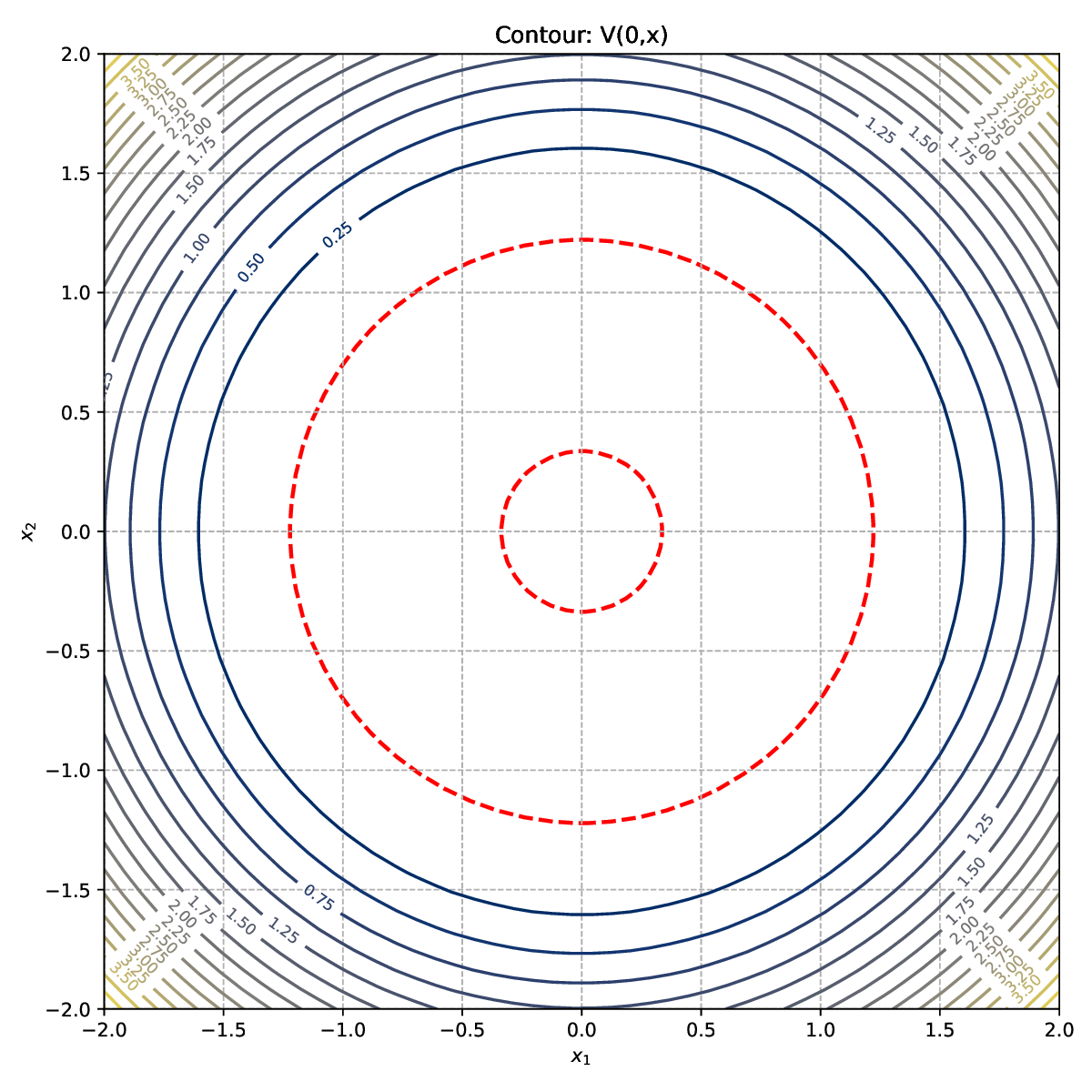}
    \caption{(Radial target problem) \emph{Left}: Simulated independent trajectories over the time horizon $[0,T=0.5]$ under the trained neural-network policy, with control constraint $M=1$. Dots and crosses represent the initial and terminal states, respectively. The blue circle is the target of radius $r_0=1$. The pink annular region represents the backward reachable set at time $t=0$, that is, $\mathcal{Z}_0$. \emph{Right}: Contour plot of the Monte Carlo estimate of the value induced by the trained neural-network policy at time $t=0$. The shape of the level sets is consistent with the rotational invariance of the underlying control problem.}
    \label{fig:trajectories_circle_ex}
\end{figure}

\begin{figure}[htbp]
    \centering
    \setlength{\tabcolsep}{3pt}

    \newcommand{\plotwidth}{0.305\textwidth}
    \newcommand{\plotfig}[1]{%
        \includegraphics[
            width=\plotwidth,
            valign=t
        ]{#1}%
    }

    \newcommand{\Nlabel}[1]{%
        \makebox[\plotwidth][c]{\small $#1$}%
    }

    \newcommand{\rowlabel}[1]{%
        \raisebox{-1.2\height}{\rotatebox[origin=c]{90}{\small #1}}%
    }

    \begin{tabular}{@{}c c c c@{}}

        {}
        & \Nlabel{N=4}
        & \Nlabel{N=16}
        & \Nlabel{N=64}
        \\

        \rowlabel{1-order scheme}
        &
        \plotfig{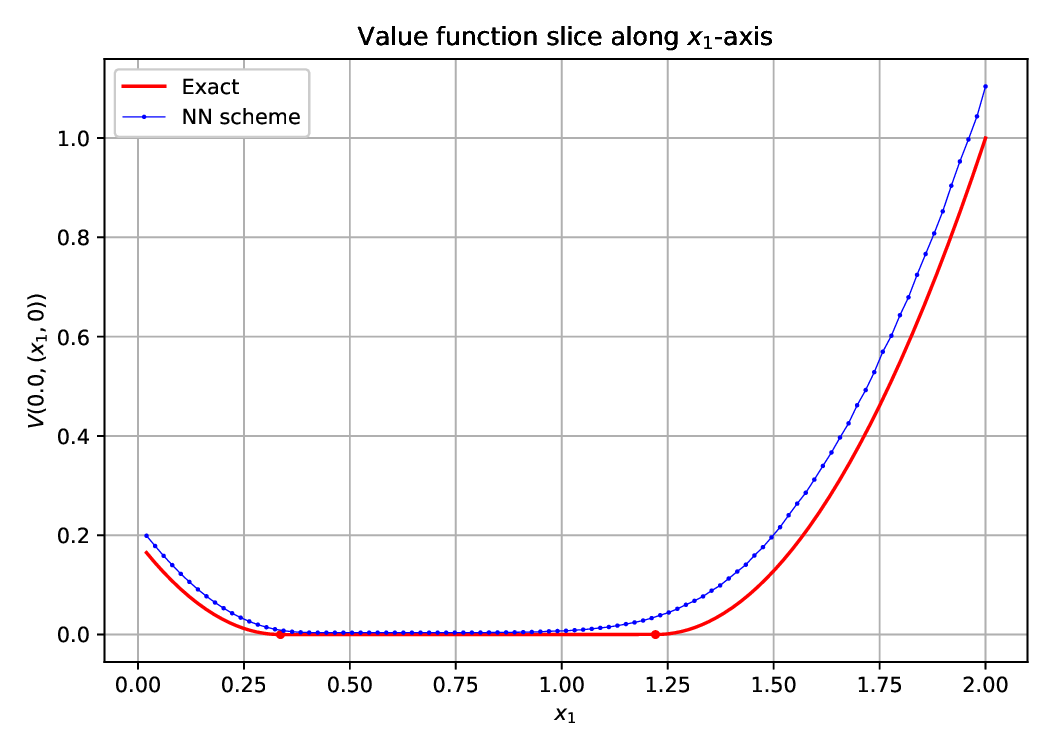}
        &
        \plotfig{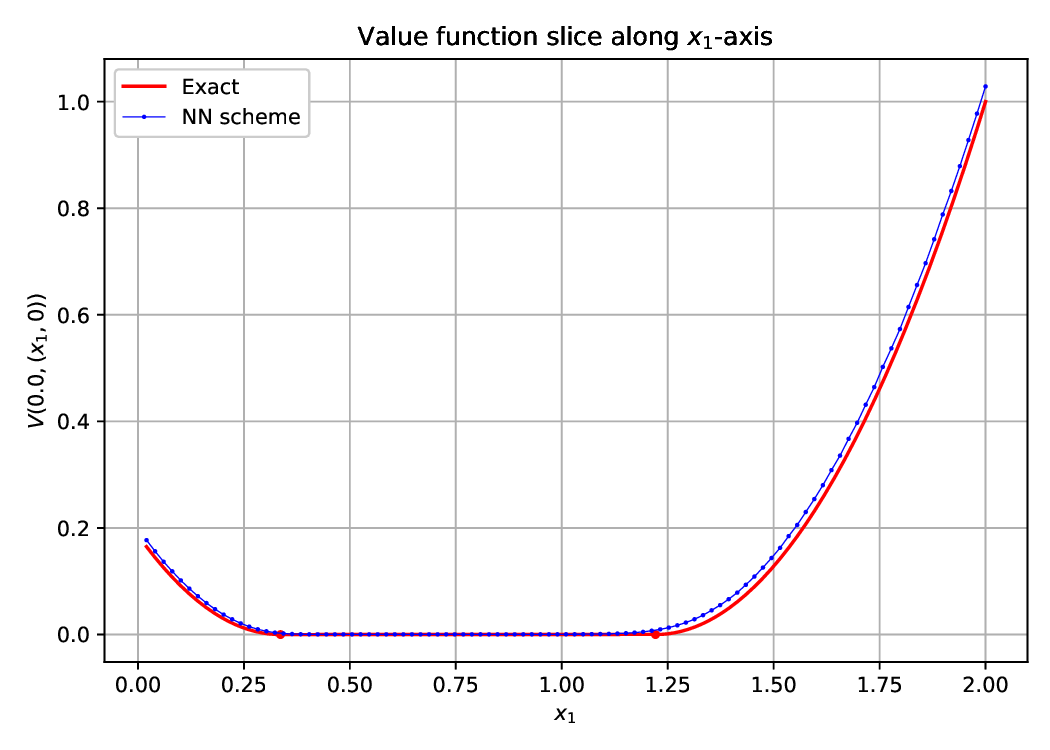}
        &
        \plotfig{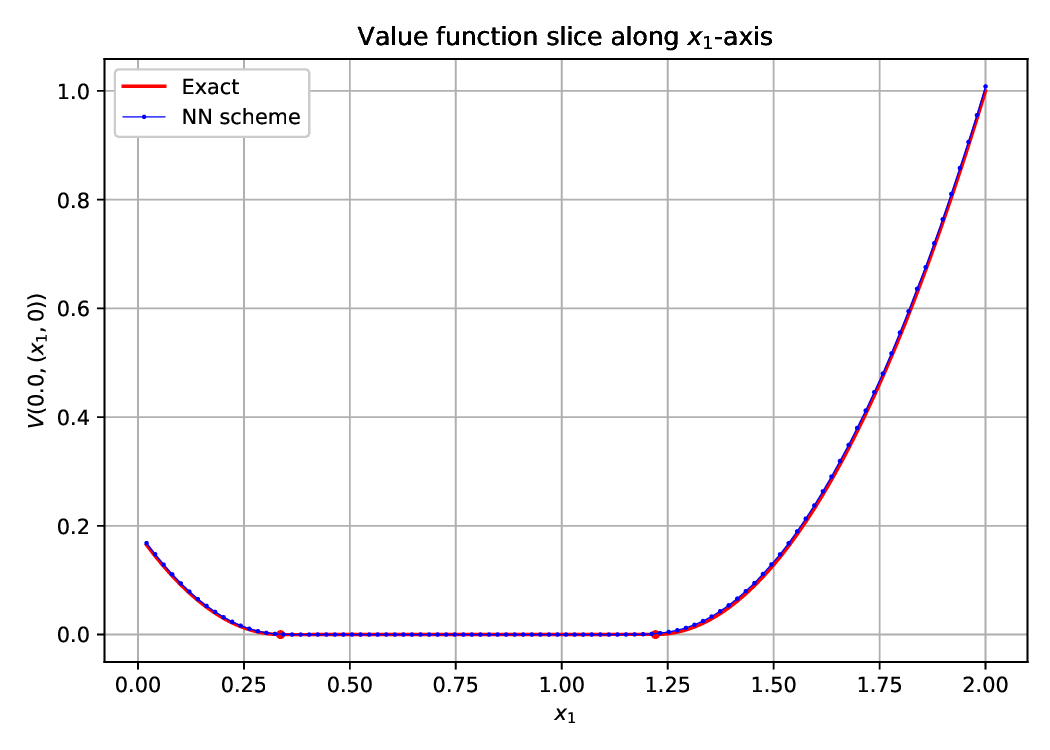}
        \\[0.8em]

        \rowlabel{2-order scheme}
        &
        \plotfig{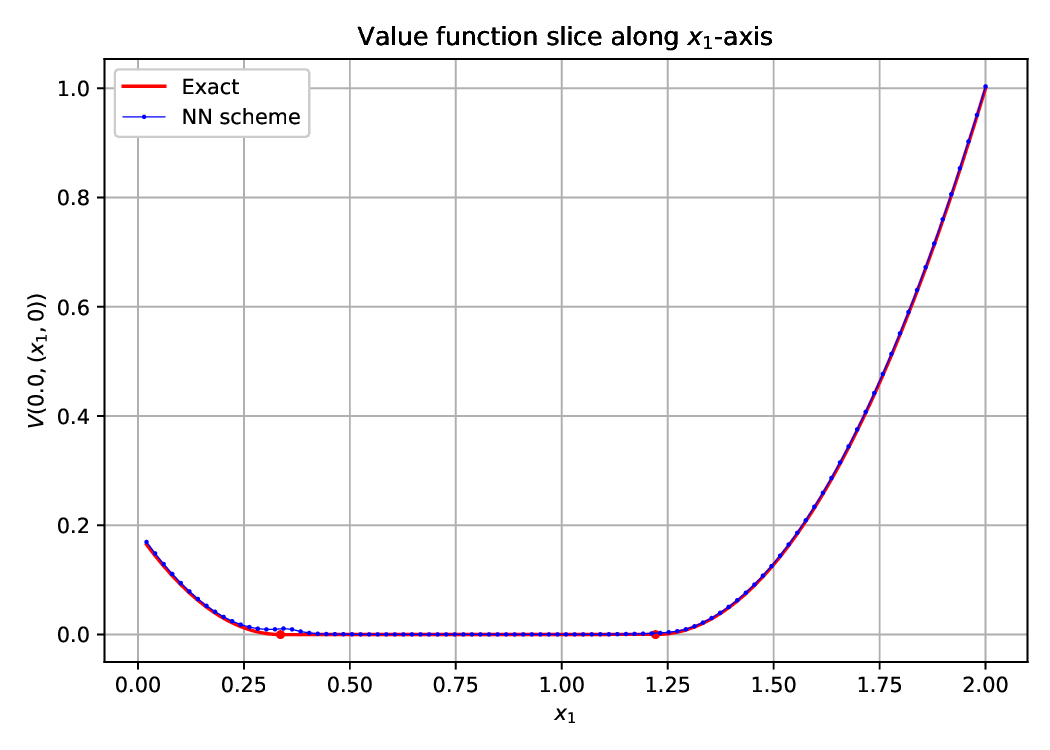}
        &
        \plotfig{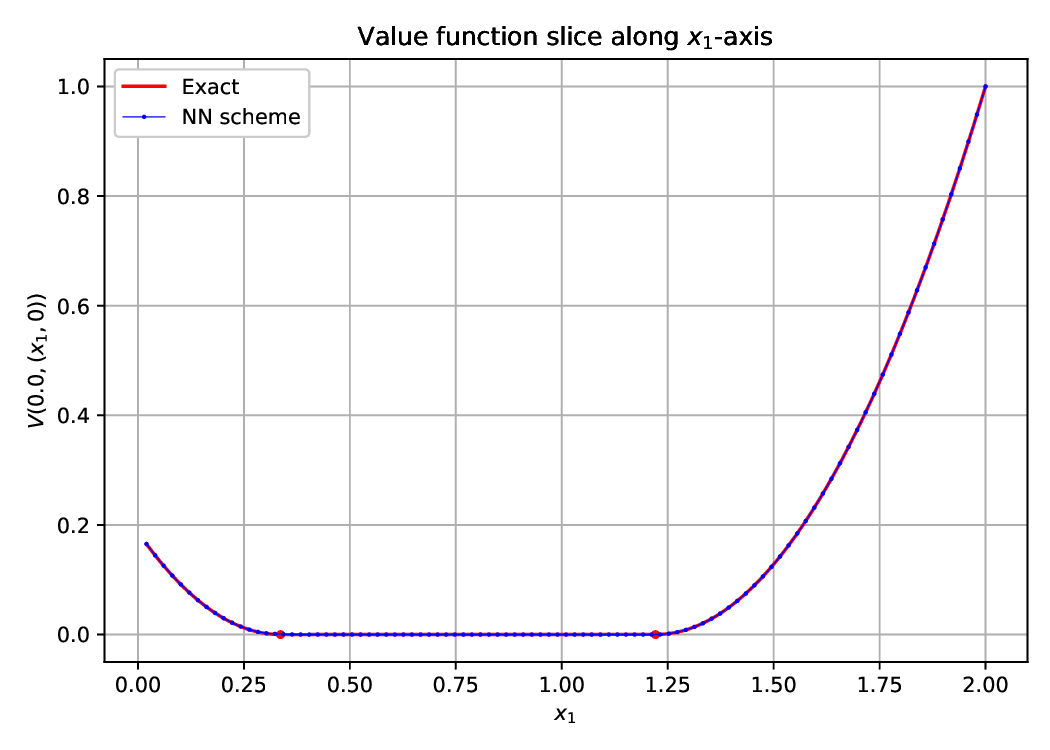}
        &
        \plotfig{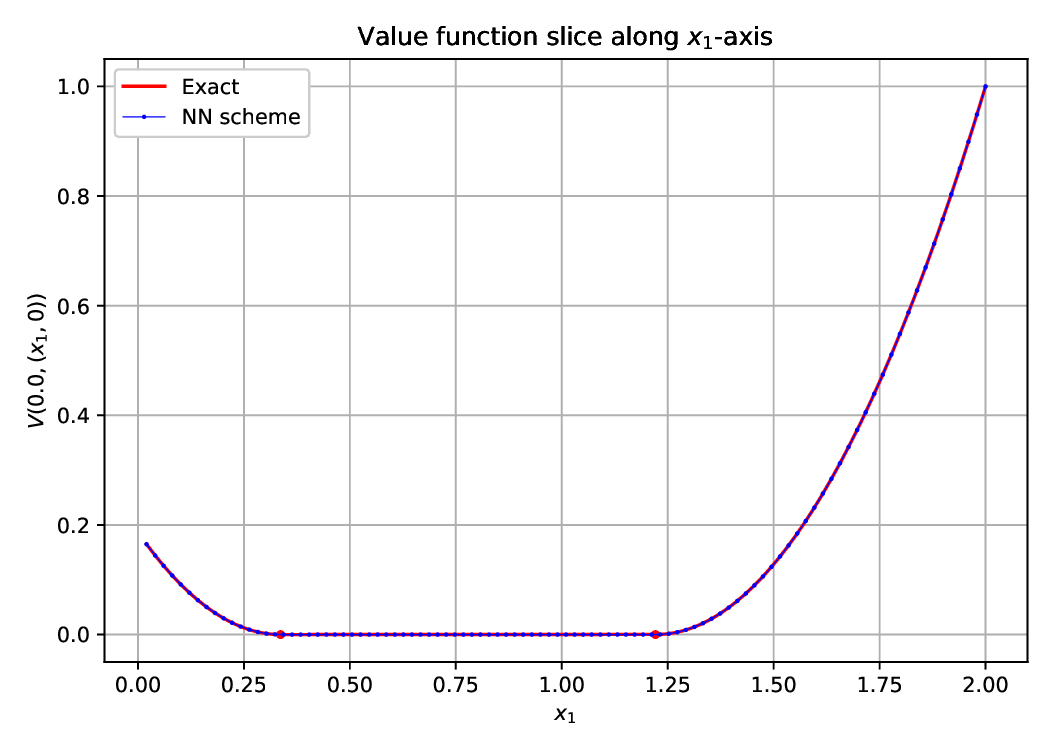}

    \end{tabular}

    \caption{(Radial target problem) Comparison between the exact value function $V(0,x)$ and the Monte Carlo estimate of the value induced by the trained neural-network policy, along the section $x=(r,0)$, $r\in(0,2]$. The value induced by the trained policy is recomputed pointwise using $10^5$ independent Monte Carlo trajectories. \emph{Top row:} Euler--Maruyama scheme. \emph{Bottom row:} Platen weak second order scheme.}
    \label{fig:VF_circle_ex}
\end{figure}

\medskip
The results reported in \Cref{fig:VF_circle_ex} show a clear and robust convergence pattern. The slices of the value function along the section $x=(r,0)$ already indicate that the second-order scheme is substantially more accurate than the first-order one, even for small values of $N$. In particular, for $N=4$, the approximation obtained with the Platen scheme is already very close to the exact value function, whereas the Euler--Maruyama approximation still exhibits a visible bias. As the time grid is refined, both schemes converge to the exact profile, with the second-order method remaining systematically more accurate.

This qualitative behavior is fully confirmed by \Cref{tab:circle_convergence,fig:loglog_convergence_circle_ex}. For the Euler--Maruyama scheme, the empirical convergence rates in the $L^1$ and $L^2$ norms are close to $1$, consistently with a first-order behavior in this benchmark. For the Platen weak second order scheme, the empirical rates are close to $2$, especially in the $L^1$ and $L^2$ norms.
The errors are evaluated along the section $x=(r,0)$, $r\in(0,2]$, by comparing the exact value function $V(0,x)$ with the Monte Carlo estimate induced by the trained policy. The reported $L^1$, $L^2$, and $L^\infty$ errors are computed on a uniform grid in $r$ and correspond to quadrature approximations of the continuous norms along this one-dimensional section.

The fact that the expected rates are recovered so accurately is particularly informative. It shows that, in the present experiment, the dominant contribution to the observed error is indeed the time-discretization error of the numerical scheme. In contrast, the neural-network approximation and optimization errors remain below this level across the tested range of discretizations. Moreover, since the final value is recomputed pointwise with $10^5$ independent Monte Carlo trajectories, the Monte Carlo contribution in the final evaluation is strongly reduced. The results also suggest that restricting to piecewise-constant feedback policies does not dominate the convergence behavior in this benchmark, as otherwise one would not observe the theoretical orders of the underlying schemes so clearly.

\begin{table}[htbp]
\centering
\caption{(Radial target problem) Error tables in discrete $L^1$, $L^2$, and $L^\infty$ norms along the section $x=(r,0)$, $r\in(0,2]$. The error is computed between the exact value function $V(0,x)$ and the Monte Carlo estimate of the value induced by the trained neural-network policy, recomputed pointwise with $10^5$ independent trajectories.}
\label{tab:circle_convergence}
\begin{tabular}{
c|
c
S[table-format=1.2e-1]
S[table-format=1.3, round-mode=places, scientific-notation=false]
S[table-format=1.2e-1]
S[table-format=1.3, round-mode=places, scientific-notation=false]
S[table-format=1.2e-1]
S[table-format=1.3, round-mode=places, scientific-notation=false]
}
\toprule
{} & {$N$} 
& {$\|e\|_{L^1}$} & {Rate} 
& {$\|e\|_{L^2}$} & {Rate} 
& {$\|e\|_{L^\infty}$} & {Rate} \\
\midrule
\multirow{6}{*}{\rotatebox[origin=c]{90}{1-order}}
& 2  & 1.380e-1 & {--}   & 1.298e-1 & {--}   & 1.744e-1 & {--} \\
& 4  & 7.659e-2 & 0.849  & 7.355e-2 & 0.819  & 1.033e-1 & 0.756 \\
& 8  & 4.063e-2 & 0.915  & 3.978e-2 & 0.887  & 5.769e-2 & 0.840 \\
& 16 & 2.116e-2 & 0.941  & 2.109e-2 & 0.916  & 3.019e-2 & 0.934 \\
& 32 & 1.071e-2 & 0.982  & 1.080e-2 & 0.966  & 1.638e-2 & 0.882 \\
& 64 & 5.403e-3 & 0.987  & 5.489e-3 & 0.976  & 8.709e-3 & 0.911 \\
\midrule
\multirow{6}{*}{\rotatebox[origin=c]{90}{2-order}}
& 2  & 2.194e-02 & {--}   & 2.037e-02 & {--}   & 5.263e-02 & {--} \\
& 4  & 4.124e-03 & 2.411  & 4.045e-03 & 2.332  & 1.108e-02 & 2.248 \\
& 8  & 9.789e-04 & 2.075  & 1.066e-03 & 1.924  & 3.514e-03 & 1.657 \\
& 16 & 1.699e-04 & 2.526  & 2.150e-04 & 2.310  & 9.558e-04 & 1.878 \\
& 32 & 3.956e-05 & 2.103  & 5.780e-05 & 1.895  & 3.030e-04 & 1.657 \\
& 64 & 9.599e-06 & 2.043  & 1.296e-05 & 2.157  & 5.964e-05 & 2.345 \\
\bottomrule
\end{tabular}
\end{table}

\begin{figure}[!t]
    \centering
    \includegraphics[width=0.325\linewidth]{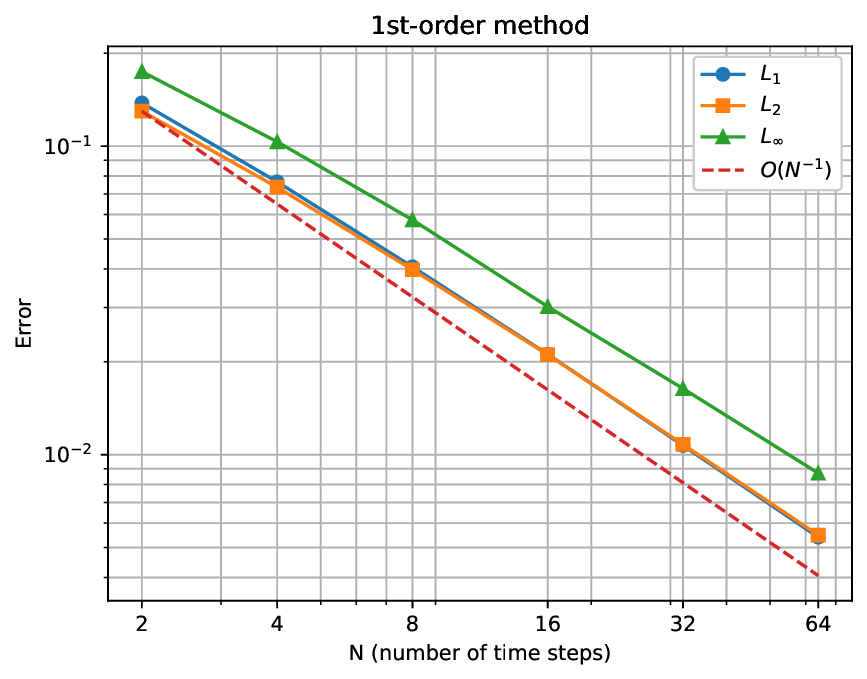}
    \includegraphics[width=0.325\linewidth]{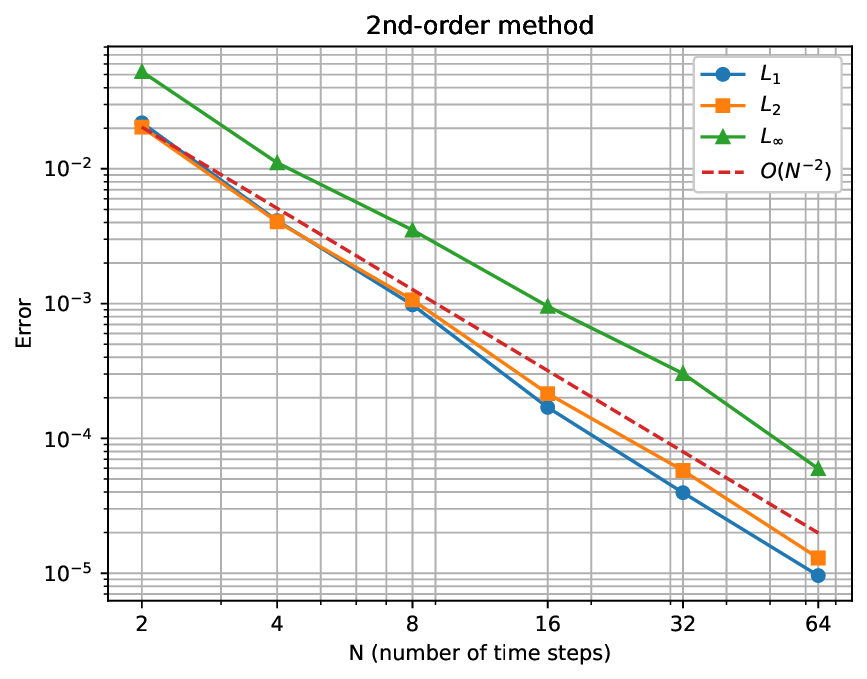}
    \includegraphics[width=0.325\linewidth]{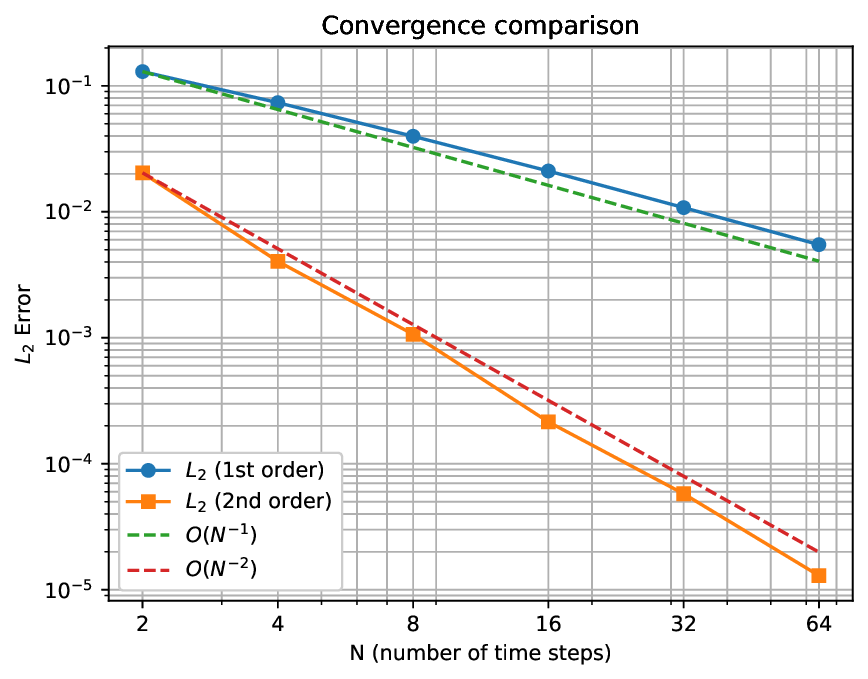}
    \caption{(Radial target problem) Log--log error for the value induced by the trained neural-network policy along the section $x=(r,0)$, $r\in(0,2]$. The dashed reference lines show a very clear first- and second-order convergence rates, as expected.}
    \label{fig:loglog_convergence_circle_ex}
\end{figure}

\subsection{Example 2: Hamilton--Jacobi--Bellman benchmark}
\label{sec:HJB_example}
The second example is a Hamilton--Jacobi--Bellman benchmark inspired by \cite{han-jen-e-18}, and illustrates the improved accuracy of the second-order weak scheme, as well as the regime in which the neural-network approximation error becomes dominant.

\paragraph{The model.}
We consider the HJB benchmark introduced in \cite[Equations (12)--(14)]{han-jen-e-18}, which we reformulate here as a stochastic optimal control problem.
The $d$-dimensional controlled dynamics are described by the following stochastic differential equation
\begin{equation*}
    \de X_t = 2 m_t \,\de t + \sqrt{2} \, \de W_t, \quad X_0 = x_0, \quad t\in[0,T],
\end{equation*}
where $m=(m_t)_{t\in[0,T]}$ represents the $d$-dimensional control process. The associated cost functional is given by
\begin{equation*}
    J\big(0,x_0,m\big) := \E\bigg[\int_0^T \| m_t \|^2 \, \de t + g(X_T) \bigg],
\end{equation*}
with $g(x):=\ln\big(\frac{1+\|x\|^2}{2}\big)$, and $\| \cdot \|$ denoting the Euclidean norm.
The infimum is taken over square-integrable progressively measurable controls.

\paragraph{Semi-analytic representation.}
The HJB equation of the problem is given by
\begin{align*}
\begin{cases}
    -\partial_t v (t,x) &=  \Delta v(t,x) + \inf_{m\in\R^d} \{ 2 \langle \nabla v(t,x), m \rangle + \|m\|^2 \}, \quad (t,x) \in [0,T) \times \R^d\\
    v(T,x) &= g(x), \quad x\in\R^d
\end{cases}
\end{align*}
with minimizer $m^*(t,x) = - \nabla v(t,x)$, which results in the following HJB equation
\begin{align*}
\begin{cases}
    -\partial_t v (t,x) &= \Delta v(t,x) -\| \nabla v(t,x) \|^2, \quad (t,x) \in [0,T) \times \R^d\\
    v(T,x) &= g(x), \quad x\in\R^d.
\end{cases}
\end{align*}
By applying the Hopf--Cole transformation $u=\exp(-v)$ (cf. \cite[Sec. 4.2]{chassagneux_richou_2016}), $u$ satisfies the backward heat equation
$\partial_t u + \Delta u = 0$, with terminal condition $u(T,x) = \exp\{-g(x)\}$,
and we deduce the following semi-analytic formulas for $v$ and $m^*$:
\begin{align}
    \label{eq:theoretical_vf_EHJ}
    v(t,x) &= - \ln\Big(\E\Big[\exp\Big\{-g(x + \sqrt{2}W_{T-t})\Big\}\Big]\Big),\\
    m^*(t,x) &= - \frac{\E[\exp\{-g(x + \sqrt{2}W_{T-t})\} \nabla g(x + \sqrt{2}W_{T-t})]}
    {\E[\exp\{-g(x + \sqrt{2}W_{T-t})\}]}. \label{eq:theoretical_control_EHJ}
\end{align}
We observe that $\|\nabla g\|\leq 1$, therefore, the unconstrained optimal feedback satisfies $\|m^*(t,\cdot)\|\leq 1$, $\forall t\in (0,T)$. Thus, restricting the admissible controls to the compact set $A=B_1$ does not change the value function and allows us to apply the theoretical framework developed in the previous sections.

\paragraph{Numerical implementation.}
In this example, we consider the time horizon $T=1$ in dimension $d=6$. The feedback control is parametrized by a deep feedforward neural network taking as input the $(d+1)$-dimensional vector $(t,x)$. The network has five hidden layers with $80$ neurons each and uses the $\mathtt{SiLU}$ activation function, which provides smoother control outputs with respect to $\mathtt{ReLU}$ for example. 
Since the optimal feedback satisfies $\|m^*(t,x)\|\le1$, the control constraint is enforced through the output activation
\begin{align*}
    \sigma_{\mathrm{out}}(z)
    =
    \begin{cases}
        \tfrac{z}{\|z\|}\sin^4 \big(\tfrac{\pi}{2}\|z\|\big),
        & 0<\|z\|<1,\\
        \tfrac{z}{\|z\|},
        & \|z\|\ge1,\\
        0,
        & z=0.
    \end{cases}
\end{align*}
This activation maps the network output into the unit ball and provides a smooth radial saturation of the control norm, having a flat derivative for both $\| z \| \to 0$ and $\| z \| \to 1$.
For each value of $N$ and for each time-discretization scheme, the policy is trained independently. The training uses the Adam optimizer with initial learning rate $\gamma=10^{-3}$, together with the adaptive learning-rate scheduler \texttt{ReduceLROnPlateau}, from \texttt{PyTorch}
\cite{pytorch_reduce_lr_on_plateau}, applied to the average training loss. The scheduler reduces the learning rate by a factor of $0.5$ after a plateau, with patience $1000$, cooldown $500$, and threshold $10^{-4}$, helping achieve better convergence. We train over $10^4$ iterations, using Monte Carlo batches of size $10^4$.
During training, the initial condition is sampled from a compactly supported isotropic distribution in $B_{2.5}$, by drawing a random direction uniformly on the sphere and an independent radius uniformly in $[0,2.5]$.
We perform simulations with both the Euler--Maruyama scheme and the Platen weak order $2.0$ scheme. Since the diffusion coefficient is constant, the second-order scheme reduces to the form reported in \eqref{2order_scheme_EHJ_example} from \Cref{appendix:KPscheme}. After training, the value induced by the learned policy is recomputed pointwise along the state-space diagonal, at time $t=0$, using $10^6$ independent Monte Carlo trajectories for each point.

\begin{figure}[htbp]
\centering

\begin{minipage}[t]{0.60\textwidth}
\vspace{7.5pt}
\centering
\small
\setlength{\tabcolsep}{4pt}

\begin{tabular}{
c|
S[table-format=1.3e-2]
S[table-format=1.3, round-mode=places, scientific-notation=false]
|
S[table-format=1.3e-2]
S[table-format=1.3, round-mode=places, scientific-notation=false]
}
\toprule
{} 
& \multicolumn{2}{c|}{1-order}
& \multicolumn{2}{c}{2-order} \\
\cmidrule(lr){2-3}
\cmidrule(lr){4-5}
{$N$}
& {$\|e\|_{L^2}$} & {Rate}
& {$\|e\|_{L^2}$} & {Rate} \\
\midrule
2  & 8.940e-02 & {--}  & 1.290e-02 & {--} \\
4  & 4.910e-02 & 0.865 & 3.340e-03 & 1.949 \\
8  & 2.570e-02 & 0.934 & 1.260e-03 & 1.406 \\
16 & 1.340e-02 & 0.940 & 9.720e-04 & 0.374 \\
32 & 7.060e-03 & 0.924 & 8.660e-04 & 0.167 \\
64 & 3.850e-03 & 0.875 & 8.230e-04 & 0.073 \\
\bottomrule
\end{tabular}

\end{minipage}
\hfill
\begin{minipage}[t]{0.38\textwidth}
\vspace{0pt}
\centering
\includegraphics[width=\textwidth]{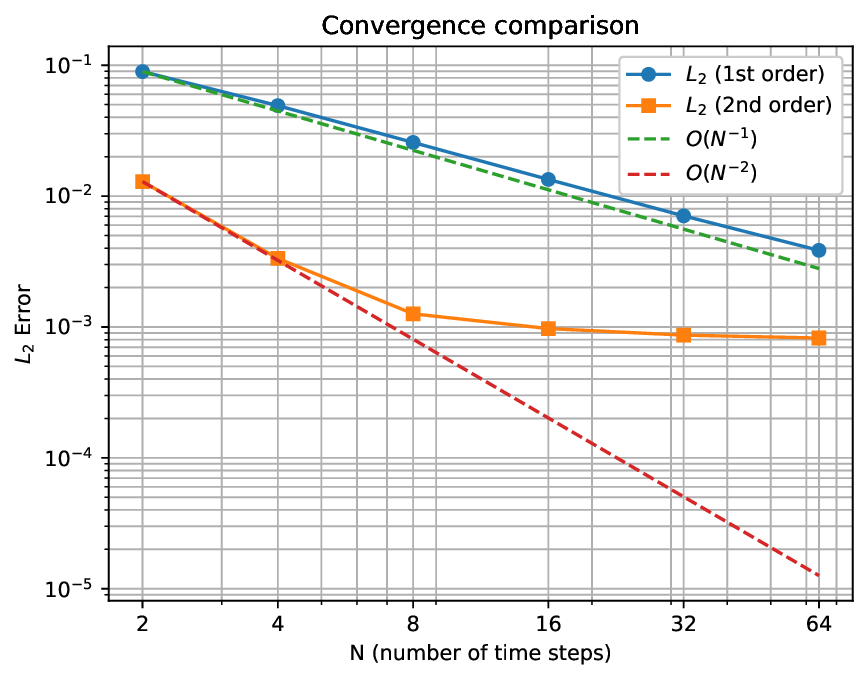}
\end{minipage}

\caption{(HJB benchmark problem) Error tables in discrete $L^2$ norm and corresponding log--log plot for the value induced by the trained neural-network policy. The error is computed along the state-space diagonal $x=(\xi,\dots,\xi)$, $\xi\in[0,2]$, by comparison with the semi-analytic value function. The value induced by the trained policy is recomputed using $10^6$ independent Monte Carlo trajectories for each point on the diagonal. The dashed lines indicate first- and second-order reference rates.}
\label{fig:l2_convergence_table_plot}

\end{figure}

\medskip
The convergence results are reported in \Cref{fig:l2_convergence_table_plot}. The error is computed along the state-space diagonal $x=(\xi,\dots,\xi)$ for $\xi\in[0,2]$,
by comparing the semi-analytic value function in \eqref{eq:theoretical_vf_EHJ} with the Monte Carlo estimate of the value induced by the trained policy. The reported $L^2$ errors are computed on a uniform grid in $\xi$, as quadrature approximations of the corresponding continuous norm along this one-dimensional section.
The Euler--Maruyama scheme exhibits a convergence rate close to 1 across the tested time steps. 
The Platen scheme is significantly more accurate already on coarse grids and initially shows a second-order decay of the error. However, after a few refinements, the second-order curve flattens and no longer follows the reference $O(N^{-2})$ slope. 
We think this behavior is consistent with the neural-network approximation error becoming dominant once the time-discretization error has been sufficiently reduced, suggesting the saturation of the capacity of the considered network.

This example therefore complements the radial target benchmark: previously, the neural network error was small enough to recover the expected orders of both schemes over the full range of discretizations. Here, instead, the second-order scheme rapidly reduces the discretization error to a level at which the residual error is mainly governed by the neural networks' approximation capacity and training accuracy.

\begin{figure}[htbp]
    \centering
    \setlength{\tabcolsep}{4pt}

    \newcommand{\plotwidth}{0.4\textwidth}
    \newcommand{\plotfig}[1]{%
        \includegraphics[
            width=\plotwidth,
            valign=t
        ]{#1}%
    }

    \newcommand{\Nlabel}[1]{%
        \makebox[\plotwidth][c]{\small $#1$}%
    }

    \newcommand{\rowlabel}[1]{%
        \raisebox{-1.6\height}{\rotatebox[origin=c]{90}{\small #1}}%
    }

    \begin{tabular}{@{}c c c@{}}

        {}
        & \Nlabel{N=4}
        & \Nlabel{N=16}
        \\[0.6em]

        \rowlabel{1-order scheme}
        &
        \plotfig{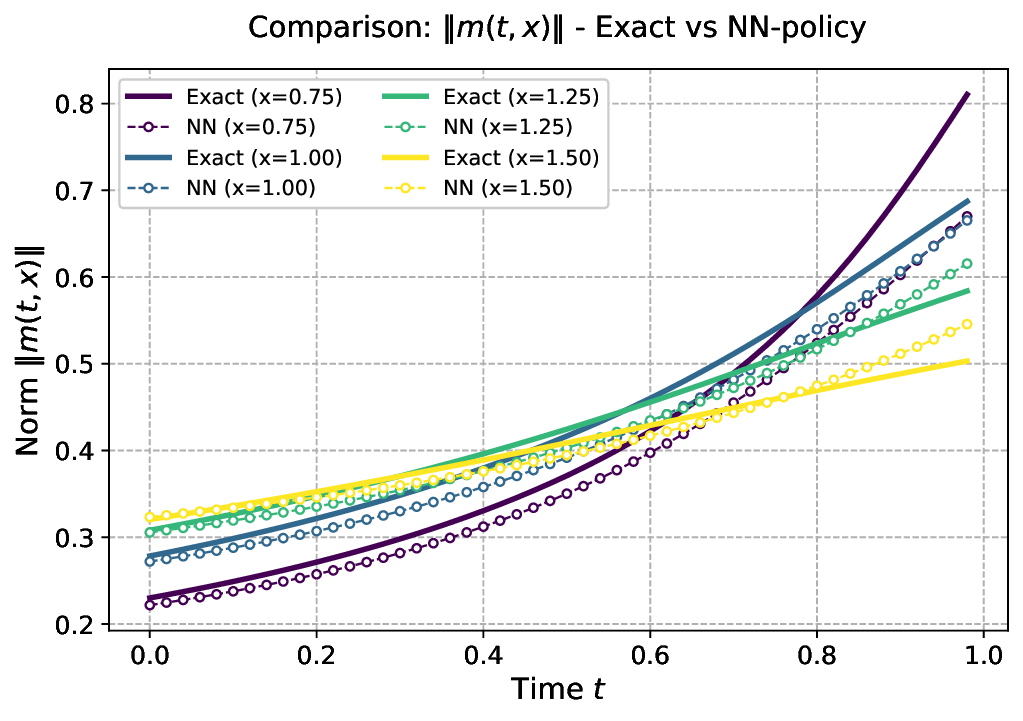}
        &
        \plotfig{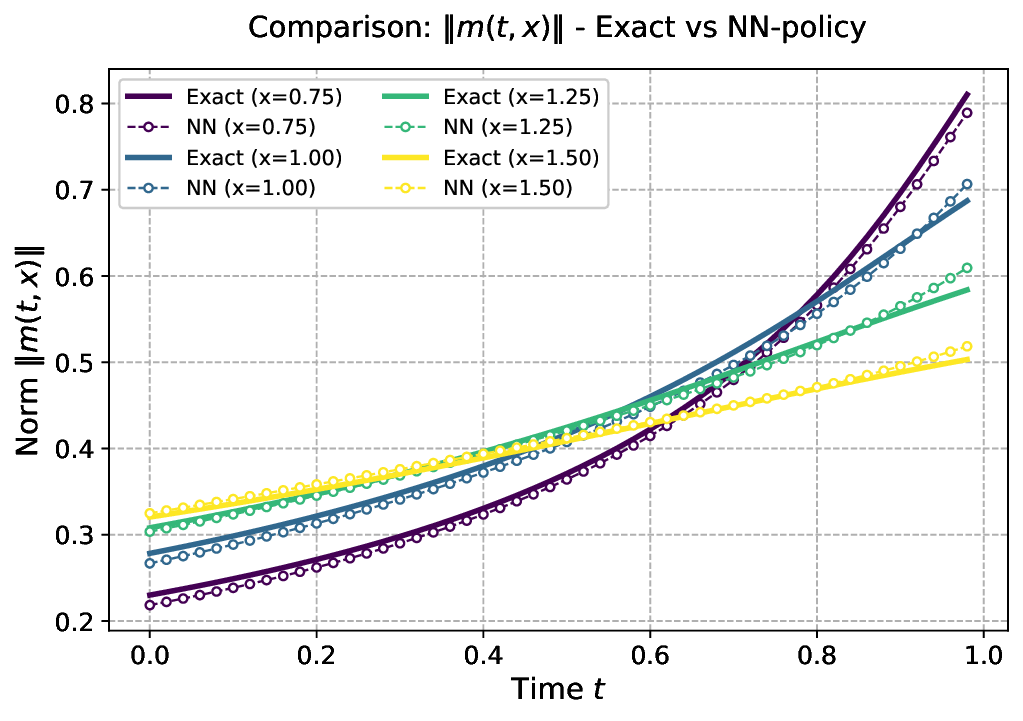}
        \\[0.8em]

        \rowlabel{2-order scheme}
        &
        \plotfig{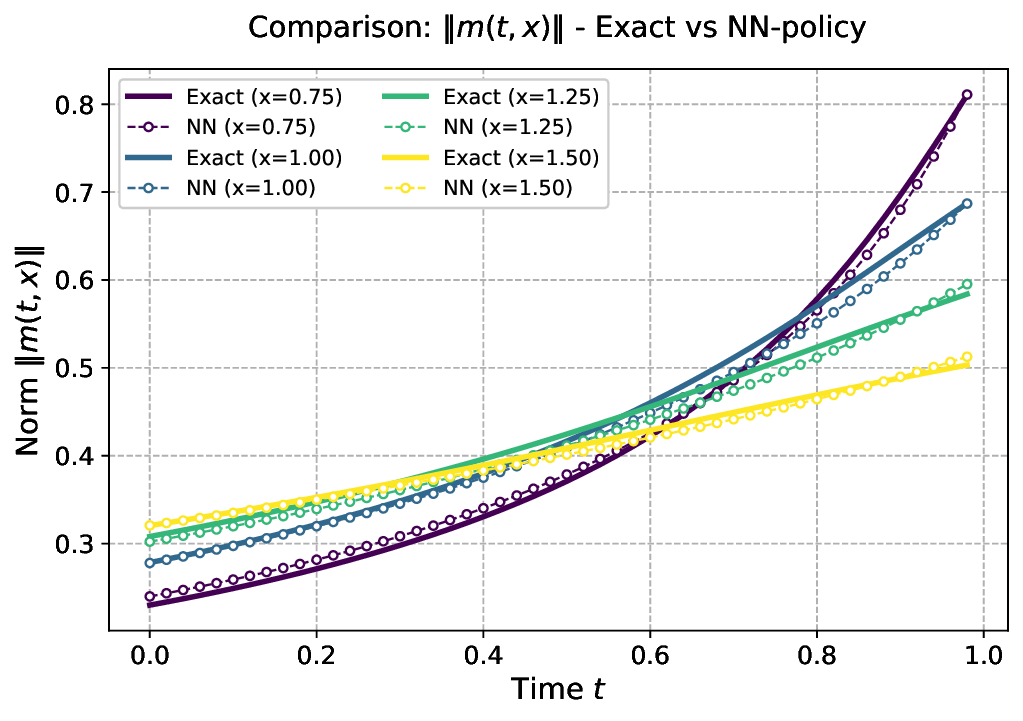}
        &
        \plotfig{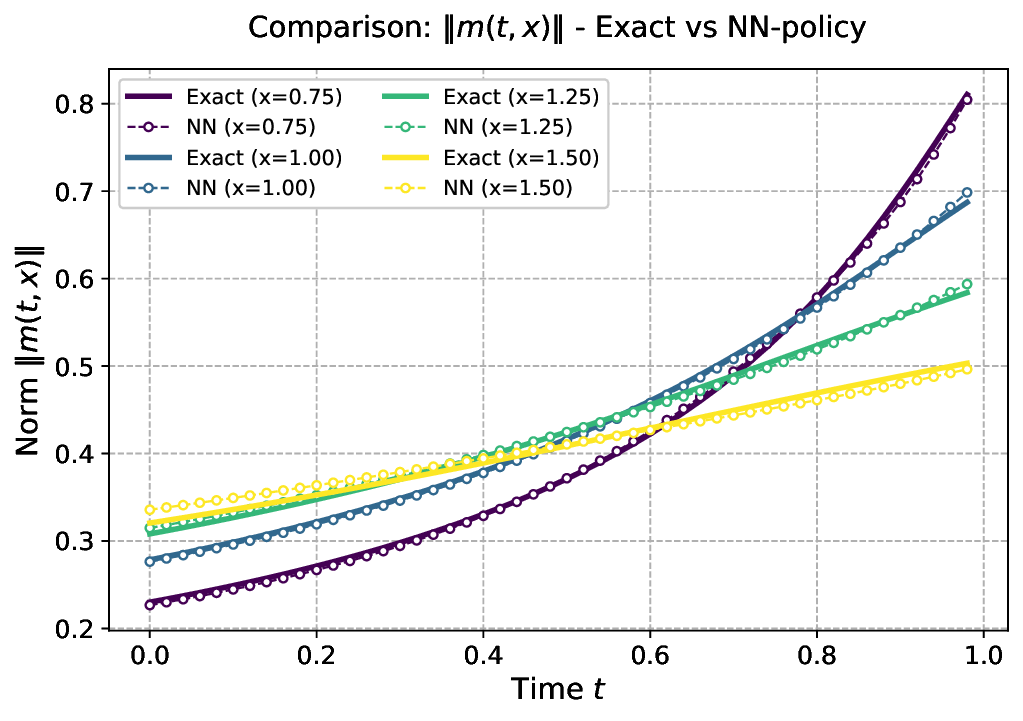}

    \end{tabular}

    \caption{(HJB benchmark problem) 
    Comparison between the norm of the semi-analytic optimal feedback and the norm of the learned neural-network feedback along the diagonal $x=(\xi,\dots,\xi)$, evaluated on a uniform grid of time points in $[0,1]$ and selected values $\xi\in\{0.75,1.0,1.25,1.5\}$.}
    \label{fig:control_norm_HJB_ex}
\end{figure}

\medskip
\Cref{fig:control_norm_HJB_ex} compares the norm of the learned feedback control with the semi-analytic optimal one along the diagonal $x=(\xi,\dots,\xi)$, for selected values of $\xi$ in the region typically explored during training. This comparison is possible in the present example because the optimal feedback is explicitly characterized by \eqref{eq:theoretical_control_EHJ}.
The plots show that the learned control captures the qualitative time profile of the optimal control. The agreement improves as the number of time steps used in training increases, especially in the region where the controlled trajectories are more frequently observed. Moreover, for the same value of $N$, the policy trained with the second-order scheme generally provides a closer approximation of the optimal control norm than the one trained with the Euler--Maruyama scheme. This is consistent with the value function errors from \Cref{fig:l2_convergence_table_plot}, where the second-order scheme reduces discretization bias, while the remaining discrepancy in the control reflects residual neural-network approximation and optimization errors.

\newcommand{\Qmin}{Q_{\min}}
\newcommand{\Qmax}{Q_{\max}}
\subsection{Example 3: Gas storage problem}
\label{sec:gas_storage_example}

The third example concerns a gas storage problem (cf. \cite{barrera2006numerical,carmona2010valuation,warin2012gas}) where prices are driven by Ornstein--Uhlenbeck processes. In this case, the exact simulation of the Ornstein--Uhlenbeck factors allows us to eliminate the time-discretization error in the exogenous stochastic factors and to focus on the effect of the piecewise-constant policy approximation with respect to the number of time steps.

\paragraph{The model.}
We examine a realistic gas storage problem in which the reservoir manager aims to maximize expected profit by optimally scheduling injections and withdrawals and purchasing the commodity when prices are low and selling it when prices are high. We consider a standard model for futures prices with short and long-term volatility factors. For each delivery maturity $\mathcal{T}>0$, the futures price $F(t,\mathcal{T})$, for $t\in[0,\mathcal{T}]$, follows the stochastic differential equation
\begin{align}\label{eq:futures}
    \frac{\de F(t,\mathcal{T})}{F(t,\mathcal{T})} = e^{-a(\mathcal{T}-t)} \sigma^S \de W^S_t + \sigma^L \de W^L_t,
\end{align}
for $a, \sigma^S, \sigma^L$ positive, with $W^S, W^L$ independent real-valued Brownian motions. The explicit solution to the linear SDE \eqref{eq:futures} is given by
\begin{align*}
    F(t,\mathcal{T}) = F(0,\mathcal{T}) \exp \bigg\{ \int_0^t e^{-a(\mathcal{T}-u)} \sigma^S \de W^S_u + \sigma^L W^L_t -\frac{1}{2} \int_0^t \Big( e^{-2a(\mathcal{T}-u)}(\sigma^S)^2 + (\sigma^L)^2 \Big) \de u \bigg\}.
\end{align*}
Denoting the spot price $S_t:=F(t,\mathcal{T}=t)$, we get
\begin{align}\label{eq:spot_price}
    S_t = F(0,t)\exp\bigg\{ \int_0^t e^{-a(t-u)}\sigma^S\de W^S_u + \sigma^L W^L_t -\frac{1}{2}\int_0^t \Big( e^{-2a(t-u)}(\sigma^S)^2 + (\sigma^L)^2 \Big) \de u \bigg\},
\end{align}
where we observe, by defining $\hat{W}^S_t := \int_0^t e^{-a(t-u)}\sigma^S\de W^S_u$, i.e., the solution to the Ornstein--Uhlenbeck process $\de \hat{W}^S_t = -a\hat{W}^S_t\de t + \sigma^S\de W^S_t$, and $\hat{W}^L_t := \sigma^L W^L_t$, that we are able to recover the Markovian setting of $S_t$ as a function of $(t, \hat{W}^S_t, \hat{W}^L_t)$.\\

The reservoir manager, according to the spot price $S_t$, aims to maximize the expected profit over a finite time horizon $T>0$, typically one year, given by $J(u) := - \E [ \int_0^T S_t u_t \de t ]$, where $(u_t)_{t\in[0,T]}$ represents the control process, i.e., injection ($u_t>0$) or withdrawal ($u_t<0$) rate. The optimization problem is subject to constraints concerning the characteristics of the storage:
\begin{itemize}
    \item $(Q_t)_{t\in[0,T]}$, the gas level in the storage, described by $\de Q_t = u_t \de t$, with initial inventory level $Q_0 = Q_{\mathrm{init}}$;
    \item withdrawal $C_W(Q)$ and injection $C_I(Q)$ rates, depending on the gas level in the storage $Q$;
    \item $Q_{\mathrm{min}}(t)$, $Q_{\mathrm{max}}(t)$, as the minimum and maximum levels of gas in the storage, at time $t$.
\end{itemize}
This results in a non-linear control problem, with flow constraints, for $t\in[0,T]$
\begin{align*}
    \begin{cases}
        -C_W(Q_t) \leq u_t \leq C_I(Q_t),\\
        Q_{\mathrm{min}}(t) \leq Q_t \leq Q_{\mathrm{max}}(t),
    \end{cases}
\end{align*}
where the first is a constraint on the control, and the second is a state constraint.

\paragraph{Time discretization and reduction to control constraints.}
The controlled Markov state is given by $(t,Q_t,\hat W^S_t,\hat W^L_t)_{t\in[0,T]}$, where the dynamics are degenerate in the inventory component $Q$. We measure time in days and consider the storage problem over one year, setting $T=365$, and $t_n=n\tau$, with $\tau=T/N$, for $n=0,\dots,N$.
The parameters appearing in the continuous-time futures model are rescaled accordingly. More precisely, if $a_{\mathrm{year}}$, $\sigma^S_{\mathrm{year}}$, and $\sigma^L_{\mathrm{year}}$ denote the annualized parameters, we set
\begin{align*}
a=\frac{a_{\mathrm{year}}}{365},
\qquad
\sigma^S=\frac{\sigma^S_{\mathrm{year}}}{\sqrt{365}},
\qquad
\sigma^L=\frac{\sigma^L_{\mathrm{year}}}{\sqrt{365}}.
\end{align*}
In the sequel, to simplify notation, we keep writing $a,\sigma^S,\sigma^L$ for these daily-scaled parameters.
The exogenous short-term Ornstein--Uhlenbeck factor and the long-term Brownian factor are simulated exactly on the grid, consistent with \eqref{eq:spot_price}, and the discrete-time dynamics are given by the recursive relation
\begin{align*}
    \begin{cases}
        \hat{W}^S_{n+1} &= e^{-a\tau}\hat{W}^S_{n} + \sigma^S\sqrt{\frac{1-e^{-2a\tau}}{2a}}Z^S_{n+1},\\
        \hat{W}^L_{n+1} &= \hat{W}^L_{n} + \sigma^L\sqrt{\tau}Z^L_{n+1},\\
        Q_{n+1} &= Q_{n} + U_{n},\\
        S_n &= F(0,t_n)\exp \Big\{ \hat{W}^S_{n} + \hat{W}^L_{n} - \frac{(\sigma^S)^2}{4a}(1-e^{-2at_n}) -\frac{1}{2}(\sigma^L)^2t_n \Big\},
    \end{cases}
\end{align*}
where $(Z^S_{n+1},Z^L_{n+1})$ are independent standard normal random variables.
The discrete control $U_n$ represents the net amount of gas injected into the storage during the period $[t_n,t_{n+1})$ and is related to the continuous-time rate by $U_n=\tau u_n$. Hence, the discretized reward functional is $\tilde{J}(U) = - \E \big[\sum_{n=0}^{N-1}S_n U_n \big]$, which corresponds to the expected trading profit associated with the discrete injection/withdrawal strategy.

The flow constraints take the form
\begin{align}
        \label{eq:discrete_constraints_control}
        &U_n \in [-C_W(Q_n)\tau ,\ C_I(Q_n)\tau] =: [U_W(Q_n) ,\ U_I(Q_n)], \qquad n=0,\dots,N-1,\\
        \label{eq:discrete_constraints_state_v1}
        &Q_n \in [\Qmin(t_n) ,\ \Qmax(t_n)], \qquad n=0,\dots,N,
\end{align}
where $U_W(Q)\le0$ denotes the withdrawal lower bound and $U_I(Q)\ge0$ the injection upper bound.
The parametrization of the control process, using neural networks, is introduced as follows. The state constraint \eqref{eq:discrete_constraints_state_v1} is equivalent to using
\begin{align}
    \label{eq:discrete_constraints_state_v2}
    U_n \in [\Qmin(t_{n+1}) - Q_n ,\ \Qmax(t_{n+1}) - Q_n ], \qquad n=0,\dots,N-1.
\end{align}
The intersection between \eqref{eq:discrete_constraints_control} and \eqref{eq:discrete_constraints_state_v2} yields a state-dependent admissible interval for the discrete control, which enforces both the flow and the inventory constraints at the next grid point:
\begin{equation}
\label{eq:discrete_constraints_final}
\begin{aligned}
    U_n \in [\underline U_n(Q_n) ,\ \overline U_n(Q_n)], \quad \text{where} \quad
    \begin{cases}
        \underline U_n(Q) &:= \max \{ \Qmin(t_{n+1}) - Q ,\ U_W(Q) \},\\
        \overline U_n(Q) &:= \min \{ \Qmax(t_{n+1}) - Q ,\ U_I(Q) \}.
    \end{cases}
\end{aligned}
\end{equation}
Since the problem is Markovian in
$(Q,\hat W^S,\hat W^L)$
at each time step, we restrict to feedback controls of the form
$U_n=\alpha_n(Q_n,\hat W^S_n,\hat W^L_n)$.
We introduce a network per time step, $\phi_n^{\theta_n}$ with parameters $\theta_n$, as an operator from $\R^3$ to $[0,1]$ such that, following \cite{warin2023reservoir}, the feedback policies are approximated by
\begin{align*}
   U_n^{\theta_n}
   = \underline U_n(Q_n) + \Big( \overline U_n(Q_n)-\underline U_n(Q_n) \Big) \phi_n^{\theta_n}(Q_n,\hat W^S_n,\hat W^L_n), \qquad n=0,\dots,N-1.
\end{align*}
Noting $\theta = (\theta_n)_{n=0}^{N-1}$, we approximate the optimal reservoir management by solving
\begin{align*}
    \theta^*= \argmax_{\theta} \tilde{J}(U^\theta) = \argmin_{\theta} \E\bigg[ \sum_{n=0}^{N-1} S_{n} U^{\theta}_n 
    \bigg].
\end{align*}

\paragraph{Numerical implementation.}
In this example, the exogenous price factors driving the spot price are simulated exactly on the time grid. Hence, unlike in the first two examples, there is no time-discretization error associated with the stochastic price factors. The observed error is therefore mainly related to the restriction to piecewise-constant policies, together with the neural-network approximation and optimization errors. 

The storage horizon runs from April 1, 2024, to April 1, 2025, corresponding to $T=365$ days. The annualized parameters used for the two-factor price model are $a_{\mathrm{year}}=1.9641$, $\sigma^S_{\mathrm{year}}=0.7877$, and $\sigma^L_{\mathrm{year}}=0.4968$.
In order to estimate the parameters $a_{\mathrm{year}}$, $ \sigma^S_{\mathrm{year}}$, and $\sigma^L_{\mathrm{year}}$, a maximum-likelihood method is used to fit the returns of the different products available on the market, namely spot prices and monthly products. The spot product, more precisely the day-ahead product, together with end-of-week, end-of-month, and monthly products, are observed on the market. Since these products correspond to non-overlapping delivery periods, a piecewise constant-in-time representation of $F(0,\cdot)$ on a daily grid is obtained from these quotations.

In order to isolate the effect of the time discretization of the control policy, the initial futures curve is replaced by a piecewise-constant approximation on the coarsest grid used in the experiments, corresponding to $N=5$.
Since the storage horizon is $T=365$ days, this gives a time step $\tau=73$ days. More precisely, we set
$F(0,t)
=
\sum_{i=0}^{4} \widehat F_i \mathbf 1_{I_i}(t)$, with
$I_i=[i\tau,(i+1)\tau)$ for $i=0,\dots,3$, and $I_4=[4\tau,T]$.
The constants $\widehat F_i$ are computed from the original market forward curve by averaging its values over the corresponding time window $I_i$.
The same input curve is then used for all finer grids. This avoids introducing additional deterministic variations in the futures curve as the number of decision dates increases, so that the convergence study primarily reflects the refinement of the piecewise-constant policy.
We use the simplified storage configuration, with no seasonal gate constraint (generally imposed by the storage owner when renting part of the capacity to third parties), and constant injection/withdrawal rates:
$Q_{\max}=10^6$,
$Q_{\min}=0$,
$Q_0=0$,
$C_I=\frac{Q_{\max}}{110}$, and
$C_W=\frac{Q_{\max}}{59}$.
Notice that, in this setting, the objective is affine with respect to the storage control, which leads to a bang--bang optimal feedback.
For each value of $N$, we implement one feedforward neural network per time step.
Each network takes as input the three-dimensional Markov state
$(Q_n,\hat W^S_n,\hat W^L_n)$
and outputs a scalar in $[0,1]$ through a final $\mathtt{sigmoid}$ activation. The hidden layers use the $\mathtt{tanh}$ activation function. In all simulations reported here, each network (per time-step) has two hidden layers with $10$ neurons each. The output of the network is then mapped into the admissible control interval according to \eqref{eq:discrete_constraints_final}.

\medskip
Since no closed-form solution is available for this constrained storage problem, we use the value obtained with $N=320$ as a reference benchmark, $v_{\mathrm{ref}}:=v_{320}$. This grid corresponds to a time step of
$\tau_{\mathrm{ref}}=\frac{365}{320}\simeq 1.14$
days, hence approximately one policy update per day.
We consider the time grids $N\in\{5,10,20,40,80,160,320\}$.
For $N=5,\dots,160$, the networks are trained for $8\times 10^4$ Adam iterations, using learning rate $\gamma=10^{-3}$ and Monte Carlo batches of size $10^5$. For the reference computation with $N=320$, the training is increased to $16\times10^4$ Adam iterations. During training, the current policy is periodically evaluated using batches of size $10^6$. After training, the reported value is recomputed with $10^8$ independent Monte Carlo trajectories, processed in batches of size $5\times10^6$.

\begin{figure}[!t]
    \centering

    \begin{subfigure}{0.45\textwidth}
        \centering
        \includegraphics[width=\linewidth]{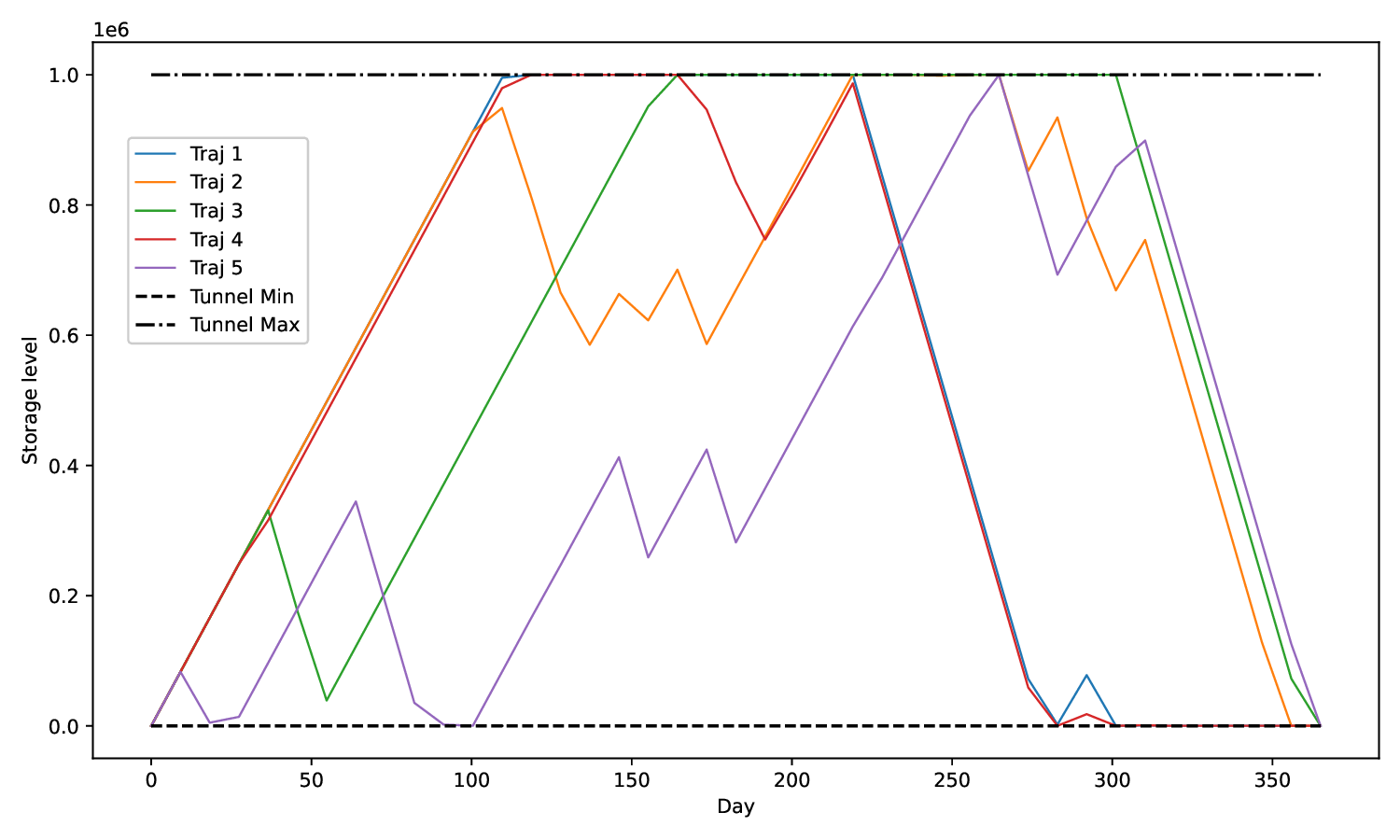}
        \caption{$N=40$}
    \end{subfigure}
    \hfill
    \begin{subfigure}{0.45\textwidth}
        \centering
        \includegraphics[width=\linewidth]{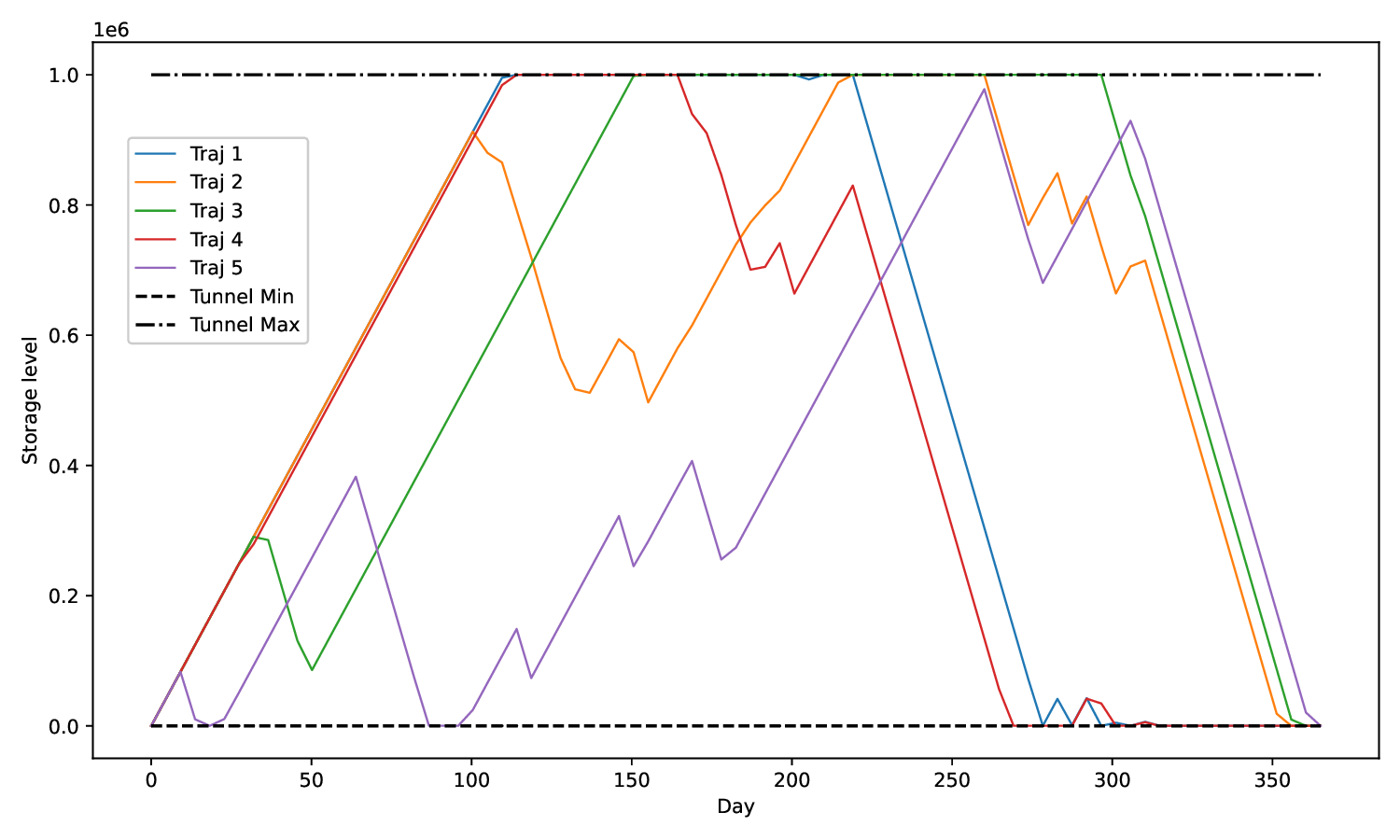}
        \caption{$N=80$}
    \end{subfigure}

    \medskip

    \begin{subfigure}{0.45\textwidth}
        \centering
        \includegraphics[width=\linewidth]{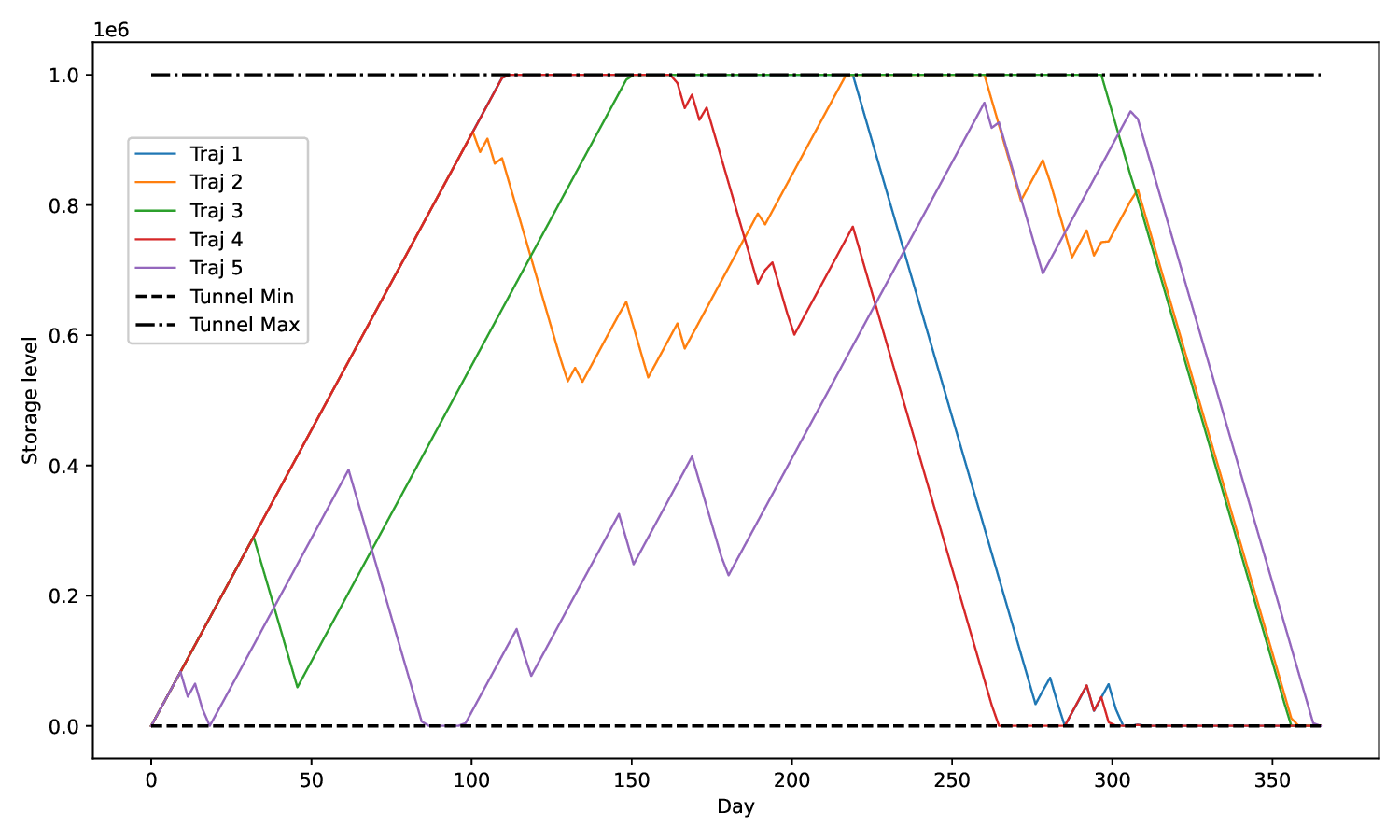}
        \caption{$N=160$}
    \end{subfigure}
    \hfill
    \begin{subfigure}{0.45\textwidth}
        \centering
        \includegraphics[width=\linewidth]{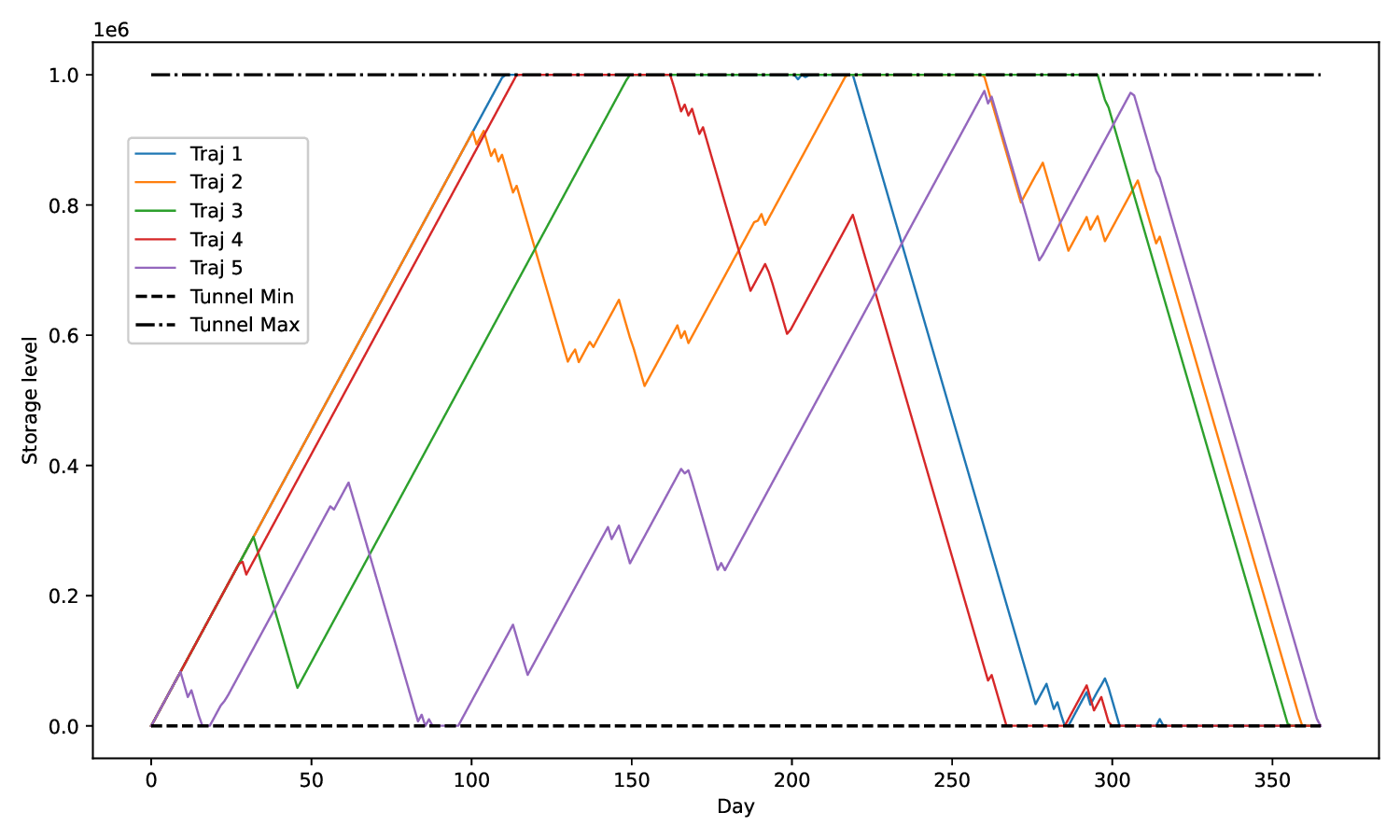}
        \caption{$N=320$ (reference benchmark)}
    \end{subfigure}

    \caption{(Gas storage problem) Simulated trajectories of the storage level for different time grids. Trajectories with the same color correspond to the same underlying realization of the driving Gaussian innovations. The innovations are generated once on the finest grid, $N=320$, and the innovations used on coarser grids are obtained by aggregating consecutive fine-grid increments over the corresponding time intervals. Thus, the different panels compare the policies under the same exogenous noise scenario, refined as the time grid becomes finer.}
    \label{fig:traj_gas_ex}
\end{figure}

\medskip
\Cref{fig:traj_gas_ex} provides a qualitative illustration of the storage policies obtained for different time grids. The same colors correspond to the same underlying realization of the exogenous noise, generated in a nested way across the different grids. Hence, the observed differences between the panels are mainly due to the refinement of the decision grid and the corresponding re-optimization of the neural-network policies under the same piecewise-constant input futures curve.
Several qualitative features can be observed. First, the inventory constraint is satisfied along all simulated trajectories, which confirms the effect of the constraint-preserving parametrization in \eqref{eq:discrete_constraints_final}. Second, the trajectories exhibit consistent storage-management behavior across different discretizations: the reservoir is filled and emptied in response to the evolution of the underlying price factors. As the number of time steps increases, the policies become more flexible in the timing of injection and withdrawal decisions. The refinement from $N=40$ to $N=320$ mainly affects the local timing of the operations, while the overall qualitative structure of the trajectories remains stable.

\medskip
The convergence study compares the values obtained on coarser decision grids with the reference value $v_{\mathrm{ref}}:=v_{320}$. For $N\in\{5,10,20,40,80,160\}$, we define the oriented relative error
$e_N := \frac{v_{\mathrm{ref}}-v_N}{v_{\mathrm{ref}}}$.
In the present experiment, the estimated values satisfy $v_N<v_{\mathrm{ref}}$, so that $e_N$ is positive. Let $\widehat v_N$ and $\widehat v_{\mathrm{ref}}$ denote the Monte Carlo estimators of $v_N$ and $v_{\mathrm{ref}}$, respectively. We estimate $e_N$ by $\widehat e_N = 1-\widehat r_N$, with $\widehat r_N := \frac{\widehat v_N}{\widehat v_{\mathrm{ref}}}$.
To construct confidence intervals for the relative error, we apply the linear delta method to the ratio $\widehat r_N$ (cf. \cite[Sec. 2.2]{delta_method}). Assuming that $\widehat v_N$ and $\widehat v_{\mathrm{ref}}$ are computed from independent Monte Carlo samples, we obtain
\begin{align*}
    \widehat{\mathrm{SE}}(\widehat{e}_N) = \widehat{\mathrm{SE}}(\widehat{r}_N) = \Bigg[ \frac{\big(\widehat{\mathrm{SE}}(\widehat{v}_N)\big)^2}{\big(\widehat{v}_{\mathrm{ref}}\big)^2} + \frac{\big(\widehat{v}_N\big)^2}{\big(\widehat{v}_{\mathrm{ref}}\big)^4}\big(\widehat{\mathrm{SE}}(\widehat{v}_{\mathrm{ref}})\big)^2 \Bigg]^{1/2},
\end{align*}
where $\widehat{\mathrm{SE}}(\cdot)$ denotes the standard error, which refers only to the Monte Carlo error in the final post-training evaluation. More precisely, once the trained policy is fixed, $\widehat v_N$ is computed as the empirical average of the gains over $M=10^8$ independent simulated trajectories, and $\widehat{\mathrm{SE}}(\widehat v_N)$ is the corresponding sample standard deviation divided by $\sqrt{M}$. The confidence intervals therefore quantify the statistical uncertainty of the Monte Carlo evaluation conditional on the trained policy.
Therefore, the two-sided confidence interval with confidence level $1-\alpha$ is given by
\begin{align*}
\mathrm{CI}_{1-\alpha}(\widehat e_N)
=
\Big[
\widehat e_N - z_{1-\alpha/2}\cdot\widehat{\mathrm{SE}}(\widehat e_N)
,\,
\widehat e_N + z_{1-\alpha/2}\cdot\widehat{\mathrm{SE}}(\widehat e_N)
\Big],
\end{align*}
where $z_{1-\alpha/2}$ denotes the $(1-\alpha/2)$-quantile of the standard normal distribution. In the plots and tables below, we use $\alpha=0.05$.

\begin{figure}[!t]
\centering
\begin{subfigure}{0.48\textwidth}
\centering
\includegraphics[width=\linewidth]{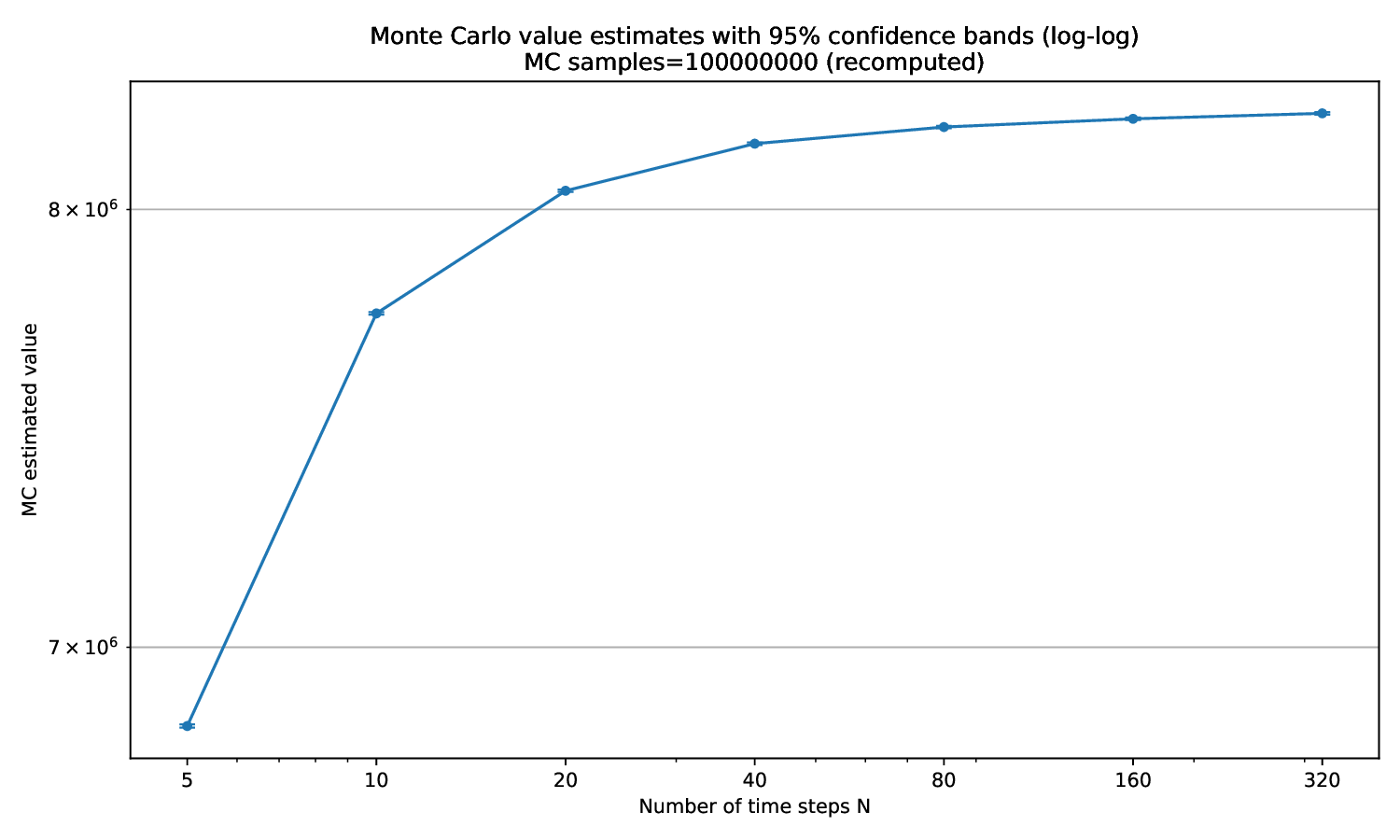}
\caption{Estimated optimized gains.}
\label{fig:gains_gas_ex}
\end{subfigure}
\hfill
\begin{subfigure}{0.48\textwidth}
\centering
\includegraphics[width=\linewidth]{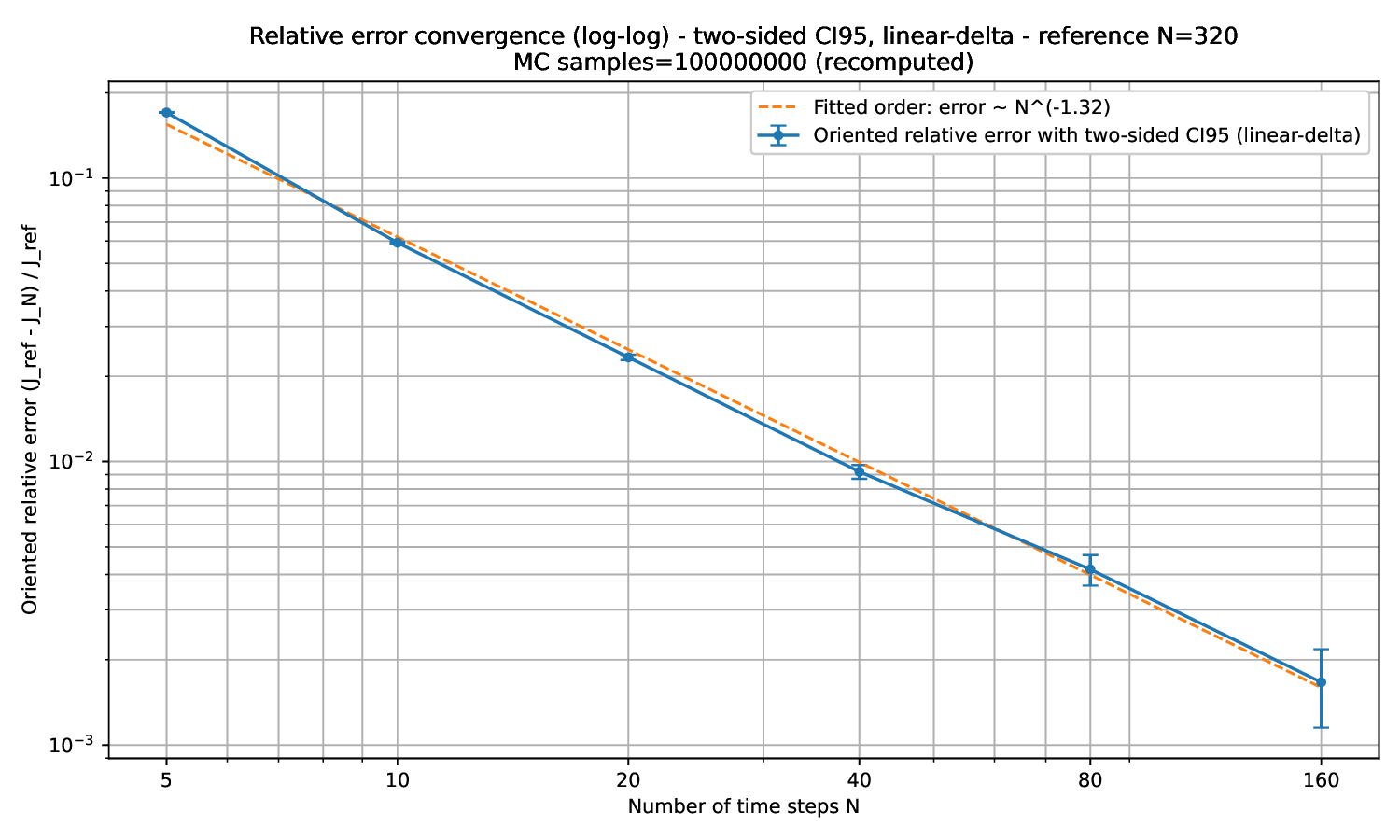}
\caption{Relative error with respect to $N=320$.}
\label{fig:relative_error_gas_ex}
\end{subfigure}
\caption{(Gas storage problem) \emph{Left:} Monte Carlo estimates of the optimized gain for different time grids, with $95\%$ confidence intervals, in log--log scale. \emph{Right:} Oriented relative error $(v_{320}-v_N)/v_{320}$ in log--log scale, with confidence intervals computed by the linear delta method and fitted algebraic decay.}
\label{fig:convergence_gas_ex}
\end{figure}

\medskip
\Cref{fig:convergence_gas_ex} summarizes the quantitative convergence results. The left panel shows that the optimized gains increase monotonically with the number of time steps. This is consistent with the fact that finer grids provide a richer class of piecewise-constant policies and therefore allow for more flexible injection and withdrawal decisions. Although such monotonicity is not enforced by the neural-network parametrization and training procedure, it is clearly observed in the numerical results, suggesting that the approximation and optimization errors are sufficiently controlled in this experiment.
The right panel reports the relative errors $e_N$ in log--log scale. The error decreases regularly as the time grid is refined. The dashed line is obtained by a least-squares linear regression in log--log scale over the points $N=5,\dots,160$, yielding the empirical behavior $e_N \simeq C N^{-1.32}$.
This rate should be interpreted as an empirical convergence rate with respect to the numerical reference value $v_{320}$, rather than as an exact theoretical order. Nevertheless, the regular decay confirms that refining the decision grid effectively reduces the loss induced by the piecewise-constant policy restriction. The confidence intervals are barely visible in the gain plot, since the final values are recomputed with $10^8$ independent Monte Carlo trajectories; in the log--log error plot, the confidence intervals appear asymmetric because they are displayed on a logarithmic scale.

\medskip
The corresponding numerical values are reported in \Cref{tab:storage_convergence}, together with the Monte Carlo confidence intervals, the relative errors, and the observed local rates.
\begin{table}[htbp]
\centering
\caption{(Gas storage problem) Values $\widehat v_N$, errors $\widehat e_N$ and estimated order of convergence.
The estimated values $\widehat v_N$ are recomputed using $10^8$ independent Monte Carlo trajectories. The reference value is $\widehat v_{320}$. The reported confidence intervals are half-widths. For instance, the entry
$\widehat v_N=6.8338 \times 10^6$ with $\mathrm{CI}_{95\%}(\widehat v_N)=3.356 \times 10^3$
corresponds to
$\widehat v_N=(6.8338\pm3.356\times10^{-3})\times10^6$.
\label{tab:storage_convergence}
}
\begin{tabular}{c|c c c c c}
\toprule
{$N$}
& {$\widehat v_N$}
& {$\mathrm{CI}_{95\%}(\widehat v_N)$}
& {$\widehat e_N$}
& {$\mathrm{CI}_{95\%}(\widehat e_N)$}
& {Rate} \\
\midrule
5
& $6.8338 \times 10^6$
& $3.356 \times 10^3$
& $1.7045 \times 10^{-1}$
& $5.067 \times 10^{-4}$
& {--} \\
10
& $7.7503 \times 10^6$
& $3.088 \times 10^3$
& $5.9196 \times 10^{-2}$
& $5.073 \times 10^{-4}$
& $1.526$ \\
20
& $8.0457 \times 10^6$
& $3.022 \times 10^3$
& $2.3331 \times 10^{-2}$
& $5.104 \times 10^{-4}$
& $1.343$ \\
40
& $8.1621 \times 10^6$
& $3.005 \times 10^3$
& $9.2102 \times 10^{-3}$
& $5.125 \times 10^{-4}$
& $1.341$ \\
80
& $8.2036 \times 10^6$
& $2.998 \times 10^3$
& $4.1670 \times 10^{-3}$
& $5.131 \times 10^{-4}$
& $1.144$ \\
160
& $8.2242 \times 10^6$
& $2.993 \times 10^3$
& $1.6652 \times 10^{-3}$
& $5.134 \times 10^{-4}$
& $1.323$ \\
320
& $8.2379 \times 10^6$
& $2.993 \times 10^3$
& {--}
& {--}
& {--} \\
\bottomrule
\end{tabular}
\end{table}
The table quantitatively confirms the behavior observed in \Cref{fig:convergence_gas_ex}. The Monte Carlo confidence intervals are small compared with the differences between consecutive discretizations, especially for coarse and intermediate grids. The local rates are relatively stable and mostly close to the fitted order observed in the log--log plot, with some expected variability due to the use of a numerical reference value and the presence of neural-network approximation and optimization errors. Since the exogenous stochastic factors are simulated exactly on the grid, the observed convergence directly reflects the discretization error induced by piecewise-constant policies. Overall, the results show a clear and consistent reduction of the error as the decision grid is refined.

\bibliographystyle{abbrv}
\bibliography{ref}

\appendix
\section{Technical results}
\label{appendix:technical_results}

\subsection{Proof of Lemma~\ref{lemma:3}}\label{appendix:subsec-1}
\begin{proof}
    We show the proof for $p=2$; the general case $p\ge2$ follows from the same argument, using the discrete-time Burkholder-Davis-Gundy inequality in $L^p$. This is a standard moment estimate (see, for instance, \cite[Theorem 4.5.4]{kloeden_numerical_1992}), and we report an adapted proof for the discrete-time setting, to make explicit the constant $C$ that appears in the estimate. By considering the recursion for the process $x_k := \widetilde{X}^{x,\alpha}_{t_k}$, we get
    \begin{align*}
        \E\bigg[ \max_{0 \leq k \leq n} |x_k - x|^2 \bigg] \leq 2\E\bigg[ \max_{0 \leq k \leq n} \Big| \sum_{i=0}^{k-1} b_i(x_i,\alpha_i(x_i))\tau \Big|^2 \bigg] + 2\E\bigg[ \max_{0 \leq k \leq n} \Big| \sum_{i=0}^{k-1} \sigma_i(x_i,\alpha_i(x_i))\sqrt{\tau}Z_{i+1} \Big|^2 \bigg].
    \end{align*}
    For the first term on the right-hand side, we get
    \begin{align*}
        \E\bigg[ \max_{0 \leq k \leq n} \Big| \sum_{i=0}^{k-1} b_i(x_i,\alpha_i(x_i))\tau \Big|^2 \bigg] &\leq \tau n \sum_{i=0}^{n-1} \E[|b_i(x_i,\alpha_i(x_i))|^2]\tau\\
        &\leq \tau n \sum_{i=0}^{n-1} \big(2[b]_3^2 + 2[b]_1^2\E[|x_i|^2]\big)\tau,
    \end{align*}
    where $[b]_3$ is a positive constant such that
    $\sup_{t\in[0,T]}|b(t,0,a)|\leq[b]_3$ for all $a\in A$; such a constant exists
    by \Cref{ass:ex_uniq_SDE}.
    For the second term, instead, we take into account the Burkholder-Davis-Gundy inequality in the discrete time setting, and similarly to before, we obtain
    \begin{align*}
        \E\bigg[ \max_{0 \leq k \leq n} \Big| \sum_{i=0}^{k-1} \sigma_i(x_i,\alpha_i(x_i))\sqrt{\tau}Z_{i+1} \Big|^2 \bigg] &\leq 4 \sum_{i=0}^{n-1} \E[|\sigma_i(x_i,\alpha_i(x_i))|^2]\tau\\
        &\leq 4 \sum_{i=0}^{n-1} \big( 2[\sigma]_3^2 + 2[\sigma]_1^2 \E[|x_i|^2] \big)\tau,
    \end{align*}
    where $[\sigma]_3>0$ is such that $|\sigma(0,a)|\leq[\sigma]_3$, for all $a\in A$.
    
    Define now $\Gamma_n := \E [ \max_{0 \leq k \leq n} |x_k - x|^2 ]$, and observe that $\E[|x_i|^2] \leq 2\Gamma_i + 2|x|^2$. Then we have obtained the following
    \begin{align*}
        \Gamma_n &\leq 4[b]_3^2(\tau n)^2 + 16[\sigma]_3^2(\tau n) + 8( [b]_1^2 \tau n + 4[\sigma]_1^2 )\sum_{i=0}^{n-1}\Gamma_i \tau + 8( [b]_1^2 \tau n + 4[\sigma]_1^2 )|x|^2 \tau n\\
        &\leq \big( ( 4[b]_3^2 T + 16[\sigma]_3^2 ) + ( 8[b]_1^2 + 4[\sigma]_1^2 ) |x|^2 \big) \tau n + 8 ( [b]_1^2 T + 4[\sigma]_1^2 ) \sum_{i=0}^{n-1}\Gamma_i \tau.
    \end{align*}
    Finally, by a discrete Gronwall's lemma, we get
    \begin{align*}
        \Gamma_n \leq (C_1 + C_2|x|^2)\tau n \cdot e^{C_3 \tau n},
    \end{align*}
    where $C_1 := 4[b]_3^2 T + 16[\sigma]_3^2$, $C_2 := 8[b]_1^2 + 4[\sigma]_1^2$ and $C_3 := 8( [b]_1^2 T + 4[\sigma]_1^2 )$. We conclude, taking $C := \max \{ C_1, C_2, C_3 \}$ that
    \begin{align*}
        \E\bigg[ \max_{0 \leq k \leq n} |x_k - x|^2 \bigg] \leq C (1 + |x|^2)T e^{C T}.
    \end{align*}
\end{proof}

\subsection{Density argument with respect to Radon measures}
\begin{lemma}\label{lemma:density_argument_law}
    Let $\mu$ be a finite Radon measure on $\R^d$ and let $A\subset\R^k$ be non-empty, compact, and convex. Then, for every Borel measurable function $\phi:\R^d\to A$ and every $\delta>0$, there exists a Lipschitz continuous function $\phi^\delta:\R^d\to A$ such that
    \begin{align*}
        \int_{\R^d}|\phi^\delta(x)-\phi(x)|^2\,\mu(\de x)<\delta.
    \end{align*}
\end{lemma}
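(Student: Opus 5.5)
We will use the density of Lipschitz (even smooth, compactly supported) functions in $L^2(\mu;\R^k)$, and then force the values into $A$ with the metric projection onto $A$.

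\emph{Step 1: density in $L^2(\mu)$.} Since $\phi$ is Borel and takes values in the bounded set $A$, each component $\phi^i$ ($i=1,\dots,k$) belongs to $L^\infty(\mu)\subset L^2(\mu)$, because $\mu$ is finite. For a finite Radon measure on $\R^d$, the space $C_c(\R^d)$ is dense in $L^2(\mu)$; this is the standard Lusin-type or regularity argument valid for any Radon measure, not only Lebesgue measure.

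\emph{Step 2: from continuous to Lipschitz.} Each $g\in C_c(\R^d)$ is uniformly continuous, so mollification $g*\rho_\epsilon$ converges to $g$ uniformly as $\epsilon\to0$. Uniform convergence implies $L^2(\mu)$ convergence because $\mu(\R^d)<\infty$, and $g*\rho_\epsilon$ is smooth with compact support, hence Lipschitz. This gives a Lipschitz map $\psi:\R^d\to\R^k$ with
\[
\int|\psi-\phi|^2\,d\mu<\delta.
\]
Note that $\psi$ need not take values in $A$.

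\emph{Step 3: projection onto $A$.} This is the only point where convexity enters. Let $\Pi_A$ be the Euclidean metric projection onto $A$. It is well defined because $A$ is closed, convex and nonempty, and it is $1$-Lipschitz. Set $\phi^\delta:=\Pi_A\circ\psi$, which is Lipschitz with $[\phi^\delta]\le[\psi]$ and takes values in $A$. Since $\phi(x)\in A$, we have $\Pi_A(\phi(x))=\phi(x)$, and therefore
\[
|\phi^\delta(x)-\phi(x)|=|\Pi_A\psi(x)-\Pi_A\phi(x)|\le|\psi(x)-\phi(x)|.
\]
Integrating against $\mu$ yields $\int|\phi^\delta-\phi|^2\,d\mu<\delta$.

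The main obstacle, which is mild, is Step 1: justifying density of $C_c$ in $L^2(\mu)$ for a general finite Radon measure, which may be singular with respect to Lebesgue measure. The argument goes as follows. Simple functions are dense in $L^2(\mu)$, so it suffices to approximate $\mathbf 1_E$ for a Borel set $E$. Inner and outer regularity of $\mu$ give a compact $K$ and an open $U$ with $K\subset E\subset U$ and $\mu(U\setminus K)$ small. Urysohn's lemma then gives $g\in C_c(U)$ with $0\le g\le1$ and $g=1$ on $K$, so that $\|g-\mathbf 1_E\|_{L^2(\mu)}^2\le\mu(U\setminus K)$.
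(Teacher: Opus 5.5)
Your proof is correct and follows the same route as the paper: density of $C_c(\R^d;\R^k)$ in $L^2(\mu;\R^k)$, then mollification to get a smooth compactly supported (hence Lipschitz) approximant, then composition with the $1$-Lipschitz projection $\Pi_A$, which fixes $\phi(x)\in A$. The only differences are in presentation: you sketch the regularity/Urysohn argument for the density step, which the paper takes from Folland (Prop.~7.9), and you justify the mollification step by uniform convergence on a finite measure space, which the paper calls a standard argument.
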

\begin{proof}
    Since $A$ is compact, $\phi$ is bounded and therefore belongs to $L^2(\mu;\R^k)$, where
    \begin{align*}
        L^2(\mu;\R^k) := \Big\{ \varphi:\R^d\to\R^k \,\text{measurable}\, \ \Big| \ \|\varphi\|^2_{L^2(\mu)} := \int_{\R^d}|\varphi(x)|^2\mu(\de x) < \infty \Big\}.
    \end{align*}
    By the density of continuous compactly supported functions in $L^2(\mu;\R^k)$ (cf. \cite[Prop.~7.9]{Folland1999}), there exists $h\in C_c(\R^d;\R^k)$ such that
    \begin{align*}
        \int_{\R^d}|h(x)-\phi(x)|^2\,\mu(\de x)<\frac{\delta}{4}.
    \end{align*}
    In addition, by a standard mollification argument, we can choose $h^\delta\in C_c^\infty(\R^d;\R^k)$, hence globally Lipschitz, such that
    \begin{align*}
        \int_{\R^d}|h^\delta(x)-h(x)|^2\,\mu(dx)<\frac{\delta}{4}.
    \end{align*}
    To conclude the proof, it is enough to observe that, $A$  being closed and convex, the projection $\Pi_A:\R^k\to A$ is well-defined and $1$-Lipschitz. Therefore, $\phi^\delta:=\Pi_A\circ h^\delta$ is Lipschitz continuous and takes values in $A$. Moreover, since $\phi(x)\in A$ for every $x\in\R^d$, we have
    \begin{align*}
        |\phi^\delta(x)-\phi(x)|
        =
        |\Pi_A(h^\delta(x))-\Pi_A(\phi(x))|
        \leq
        |h^\delta(x)-\phi(x)|.
    \end{align*}
    Consequently,
    \begin{align*}
        &\int_{\R^d}|\phi^\delta(x)-\phi(x)|^2\,\mu(\de x)
        \leq
        \int_{\R^d}|h^\delta(x)-\phi(x)|^2\,\mu(\de x) \\
        &\hspace{2cm} \leq
        2\bigg( \int_{\R^d}|h^\delta(x)-h(x)|^2\,\mu(\de x)
        +
        \int_{\R^d}|h(x)-\phi(x)|^2\,\mu(\de x) \bigg) < \delta.
    \end{align*}
\end{proof}

\section{Explicit computations for Example~\ref{sec:target_circle_example}}
\label{appendix:target_ex_computations}

\subsection{Radial reduction and exact solution.}
Define the radius $R_s := |X_s|$. By It\^o's formula, we get a finite variation evolution for the square of the radius,
\begin{align}\label{eq:ex2_dyn_radius_squared}
    \de R^2_s = 2\langle X_s,\, \alpha_s \rangle \de s + R^2_s \de s, \quad R^2_t = |x|^2.
\end{align}
Thus, let $r:=|x|$ and define $v(t,r)$ as the radial reduction of the HJB. For $r=|x|>0$, the radial derivatives satisfy
\begin{align*}
    \nabla V (t,x) &= \frac{\partial v}{\partial r}\nabla r (t,x)= v_{r}(t,|x|)\frac{x}{|x|} = v_{r}(t,|x|)\frac{x}{r},\\
    \nabla^2 V(t,x) &= v_{rr}(t,|x|)\frac{xx^\top}{r^2} + \frac{v_r}{r}(t,|x|)\bigg(I_2 - \frac{xx^\top}{r^2} \bigg),
\end{align*}
where $v_r$ and $v_{rr}$ denote the partial derivatives of $v$ with respect to the radial variable. Also, $\mathrm{Tr}[\sigma\sigma^\top(t,x)\nabla^2 V(t,x)] = r v_r (t,|x|)$ and we derive the HJB equation for $v$, in a viscosity sense,
\begin{align*}
\begin{cases}
    \partial_t v(t,r) + \frac{1}{2}r v_r (t,r) - M|v_r(t,r)| = 0, \quad (t,r) \in [0,T)\times\R_+\\
    v(T,r) = (r - r_0)^2.
\end{cases}
\end{align*}
Whenever $x\neq0$ and $v_r(t,|x|)\neq0$, the optimal feedback selection of \eqref{eq:alpha_opt_circle} is then given by $\alpha^*(t,x) = -M\frac{x}{|x|}\mathrm{sgn}(v_r(t,|x|))$.
Inside the zero-level set, $v$ is locally flat, and the optimal controls are generally non-unique.

\subsection{Backward reachable set.}
We derive an explicit characterization of the boundary curves of the backward reachable set and the associated value function. According to \eqref{eq:ex2_dyn_radius_squared}, $R_s := |X_s| = \sqrt{R^2_s}$ satisfies the finite-variation dynamics
\begin{align*}
    \de R_s = \bigg( \frac{R_s}{2} + u_s \bigg)\de s, \qquad u_s := \bigg\langle \alpha_s, \, \frac{X_s}{|X_s|} \bigg \rangle,
\end{align*}
where $u_s$ represents the radial component of the control. All expressions involving $X_s/|X_s|$ are meant for $X_s\neq0$, and the case $X_s=0$ is treated by continuity in the radial variable.

The boundary curves of the backward reachable set are obtained by imposing the extremal admissible radial controls. The lower boundary corresponds to the maximal outward radial control $u \equiv M$, while the upper boundary corresponds to the maximal inward radial control $u \equiv -M$. The two curves are then expressed as solutions of the following linear ODEs:
\begin{align*}
\begin{cases}
    \dot{\rho}_{-}(t) = \frac{1}{2}\rho_{-}(t) + M, \quad \rho_{-}(T) = r_0, \qquad\qquad &\text{thus} \quad \rho_{-}(t) = -2M + (r_0+2M)e^{(t-T)/2};\\
    \dot{\rho}_{+}(t) = \frac{1}{2}\rho_{+}(t) - M, \quad \rho_{+}(T) = r_0, \qquad\qquad &\text{thus} \quad \rho_{+}(t) = 2M + (r_0-2M)e^{(t-T)/2}.
\end{cases}
\end{align*}
It follows that the zero-level set boundaries are explicitly characterized through $\rho_{-}(t)$ and $\rho_{+}(t)$,
\begin{align*}
    \Zcal_t = \big\{ x\in\R^2 \,:\, \max \{ 0, \rho_{-}(t) \} \leq |x| \leq \rho_{+}(t) \big\},
\end{align*}
and the explicit formula for the value function is
\begin{align*}
    V(t,x) = 
    \begin{cases}
        e^{T-t} ( \rho_{-}(t) - |x| )^2, \qquad &\, \rho_{-}(t)>0, \quad 0 \leq |x| < \rho_{-}(t),\\
        0, \qquad &\, x \in \mathcal{Z}_t,\\
        e^{T-t} ( |x| - \rho_{+}(t) )^2, \qquad &\, |x| > \rho_{+}(t).\\
    \end{cases}
\end{align*}
Moreover, outside the zero-level set, the sign of $v_r$ is determined by the relative position of $r$ with respect to the reachable interval. More precisely, $v_r(t,r)<0$ if $r<\rho_-(t)$, and $v_r(t,r)>0$ if $r>\rho_+(t)$.
Hence, whenever $x\notin\mathcal{Z}_t$ and $x\neq0$, an optimal feedback is given by
\begin{align*}
\alpha^*(t,x)=
\begin{cases}
+M\frac{x}{|x|}, & |x|<\max\{0,\rho_-(t)\},\\
-M\frac{x}{|x|}, & |x|>\rho_+(t),
\end{cases}
\end{align*}
i.e., if $|x|>\rho_+(t)$, the control acts inward, decreasing the radius as much as possible;
if $|x|<\rho_-(t)$, it acts outward, increasing the radius as much as possible.

\section{Explicit weak order 2.0 scheme}
\label{appendix:KPscheme}
Let $(W_t)_{t\geq0}$ an $m$-dimensional standard Brownian motion, we consider a $d$-dimensional controlled stochastic differential equation of the form
\begin{align*}
    \de X_t = b(t,X_t,\alpha_t)\de t + \sigma(t,X_t,\alpha_t)\de W_t,\quad X_0=\xi,\quad t\in[0,T], 
\end{align*}
for which we assume the control $\alpha_t=\alpha(t,X_t)$ in feedback-form. For notational simplicity, we denote the discretized version, corresponding to our discrete-time setting in \cref{sub:discrete_time_setting}, by $(Y_n)_{n=0}^{N}$, where the size of the time step is $\Delta t:= T/N$ and $\Delta W_n:= \sqrt{\Delta t}Z_n$, with $Z_n \, \mathrm{i.i.d.} \, \sim \mathcal{N}(0_m,\mathrm{I}_m)$, representing the Brownian increment.

We consider explicit scheme proposed by Kloeden and Platen and which was originally formulated for autonomous SDEs 
(\cite[eq. (15.1.3)]{kloeden_numerical_1992}).  Since the optimal control problems under consideration in this work are inherently non-autonomous, the coefficients may be time-dependent, and the feedback control policy $\alpha_t = u(t, X_t)$ introduces an explicit dependence on the temporal variable $t$. Hence we treat time as an additional state variable by defining an augmented state vector $\bar{X}_s = (s, X_s)^\top$. 
This leads to the following expressions,
for Example~1 (\Cref{sec:target_circle_example}):
\be
    Y_{n+1} 
     &= & Y_n + \frac{1}{2}\Big(b(t_n, Y_n,\ma_n) + b(t_{n+1},\bar{Y}_{n+1},\bar\ma_{n+1}) \Big)\Delta t \nonumber \\
     &  & \hspace{10ex}+ \frac{1}{4}\Big( \sigma(Y^+_{n+1}) + \sigma(Y^-_{n+1}) + 2\sigma(Y_n) \Big)\Delta W_{n+1} \nonumber \\
     &  & \hspace{10ex}+ \frac{1}{4}\Big( \sigma(Y^+_{n+1}) - \sigma(Y^-_{n+1}) \Big) \big( (\Delta W_{n+1})^2 - \Delta t \big) \Delta t^{-1/2}
  \label{2order_scheme_circle_example}
\ee
with
$$
\begin{aligned}
    &  \bar{Y}_{n+1}  := Y_n + b(t_n,Y_n,\ma_n)\Delta t + \sigma(Y_n)\Delta W_{n+1},\\
    &  Y^\pm_{n+1}   := Y_n + b(t_n,Y_n,\ma_n)\Delta t  \pm \sigma(Y_n)\sqrt{\Delta t},\\
    &  \ma_n   := \ma(t_n,Y_n), \qquad \bar\ma_{n+1} :=\ma(t_{n+1},\bar{Y}_{n+1})
\end{aligned}
$$
and, for Example~2, with $\ms$ constant (\Cref{sec:HJB_example}):
\begin{equation}
\label{2order_scheme_EHJ_example}
\begin{aligned}
    & Y_{n+1} = Y_n + \frac{1}{2}\Big(b(\alpha(t_{n+1},\bar{Y}_{n+1})) + b(\alpha(t_n, Y_n))\Big)\Delta t + \sigma \Delta W_{n+1},\\
    & \bar{Y}_{n+1} = Y_n + b(\alpha(t_n, Y_n))\Delta t + \sigma \Delta W_{n+1}.
\end{aligned}
\end{equation}

\end{document}